\numberwithin{equation}{section}
	\definecolor{yellow-green}{rgb}{0.6, 0.8, 0.2}
	\definecolor{viridian}{rgb}{0.25, 0.51, 0.43}
\title{Compact relative $\mathrm{SO}_0(2,q)$-character varieties of punctured spheres}
\author{Yu Feng\thanks{Chern Institute of Mathematics and LPMC, Nankai University, Tianjin 300071, China, \href{mailto:yuf@nankai.edu.cn}{yuf@nankai.edu.cn}} \mbox{\ and }Junming Zhang\thanks{Chern Institute of Mathematics, Nankai University, Tianjin 300071, China, \href{mailto:junmingzhang@mail.nankai.edu.cn}{junmingzhang@mail.nankai.edu.cn}}}
\date{\vspace{-2em}}
\newtheorem{theorem}{Theorem}[section]
\newtheorem{corollary}[theorem]{Corollary}
\newtheorem{proposition}[theorem]{Proposition}
\newtheorem{lemma}[theorem]{Lemma}
\newtheorem{definition}[theorem]{Definition}
\newtheorem{remark}[theorem]{Remark}
\newcommand{\dd}{{\mathrm d}}
\newcommand{\EE}{{\mathcal{E}}}
\newcommand{\KK}{{\mathcal{K}}}
\newcommand{\LL}{{\mathcal{L}}}
\newcommand{\UU}{{\mathcal{U}}}
\newcommand{\VV}{{\mathcal{V}}}
\newcommand{\SO}{{\mathrm{SO}}}
\newcommand{\so}{{\mathfrak{so}}}
\newcommand{\sslash}{
	\mathchoice{\mathbin{\mkern-3mu/\mkern-6mu/\mkern-3mu}}% \displaystyle
	{\mathbin{\mkern-3mu/\mkern-6mu/\mkern-3mu}}% \textstyle
	{\mathbin{\mkern-2mu/\mkern-5mu/\mkern-1mu}}% \scriptstyle
	{\mathbin{\mkern-2mu/\mkern-5mu/\mkern-1mu}}% \scriptscriptstyle
}
\newcommand{\iu}{{\mathrm i}}
\newcommand{\dprime}{{\prime \prime}}
\begin{document}

\pagenumbering{gobble} % keep title page without a number
\maketitle

\pagenumbering{arabic} % start page numbers again
\setcounter{section}{0}
\setcounter{page}{1}
\vspace{-1em}

\begin{abstract}
    We prove that there are relative $\SO_0(2,q)$-character varieties of the punctured sphere which are compact, totally non-hyperbolic and contain a dense representation. This work fills a remaining case of the results of N. Tholozan and J. Toulisse. Our approach relies on the non-abelian Hodge correspondence and we study the moduli space of parabolic $\SO_0(2,q)$-Higgs bundles with some fixed weight. Additionally, we provide a construction based on Geometric Invariant Theory (GIT) to demonstrate that the considered moduli spaces can be viewed as a projective variety over $\mathbb{C}$.
\end{abstract}

\small\textbf{Keywords. }{Character varieties, Parabolic Higgs bundles, Geometric invariant theory.}

\small\textbf{2020 Mathematics Subject Classification. }{14L24, 14M35, 14H60}

\large

\tableofcontents

\section{Introduction}

For an oriented surface $\Sigma_{g,s}$ of genus $g$ and with $s$ punctures, its $G$-character variety, where $G$ is a real reductive Lie group, consists of the equivalence classes of reductive representations from the fundamental group $\Gamma_{g,s}:=\pi_1(\Sigma_{g,s})$ to $G$. It is a classical problem to study the topology of the character variety.

For closed surfaces $\Sigma_{g,0}$ where $g>1$, and $G=\mathrm{PSL}(2,\mathbb{R})$ or $\mathrm{PSL}(2,\mathbb{C})$, the topology of the character variety is well studied by N. Hitchin in \cite{hitchin1987self} with an analytic method using the technique of Higgs bundles and by W. M. Goldman in \cite{goldman1988topological} in a more geometric way. An important result is that when $G=\mathrm{PSL}(2,\mathbb{R})$, the connected components of the character variety are distinguished by the Euler number of their representations, which is bounded by the Milnor--Wood inequality \cite{milnor1958existence}. 

However, when the surface is non-compact, as shown in \cite{deroin2019supra} by B. Deroin and N. Tholozan for $\Sigma_{0,s}$ with $s\geqslant3$, there exist \textit{supra-maximal} representations whose Euler number exceeds the bound of the Milnor--Wood inequality. Furthermore, they proved that the supra-maximal representations with prescribed monodromy form a connected component symplectomorphic to $\mathbb{C}P^{s-3}$ by using the Atiyah--Bott--Goldman symplectic structure, which generalizes Benedetto--Goldman's work \cite{benedetto1999topology} for $s=4$. Soon after their results, in \cite{mondello_2018}, G. Mondello used parabolic Higgs bundles and non-abelian Hodge theory to give a complete description of relative $\mathrm{PSL}(2,\mathbb{R})$-character
varieties of $\Sigma_{g,s}$ and reproved Deroin--Tholozan's results.

Later, by using the non-abelian Hodge correspondence, N. Tholozan and J. Toulisse generalized Deroin--Tholozan's results to $G=\mathrm{SU}(p,q)$ in \cite{tholozan2021compact}. They studied some compact components of the moduli space of parabolic $\mathrm{SU}(p,q)$-Higgs bundles and showed that some relative character varieties $\mathfrak{X}_h(\Sigma_{0,s},\mathrm{SU}(p,q))$ (the point in character variety with prescribed monodromy $h$) have a compact connected component which consists of totally non-hyperbolic representations. Moreover, their component has a representation whose image is Zariski-dense in $\mathrm{SU}(p,q)$. By embedding into $\mathrm{SU}(p,q)$, they found components with similar properties for another two families of classical Hermitian Lie groups, i.e. $\mathrm{Sp}(2n,\mathbb{R})$ and $\mathrm{SO}^*(2n)$. We refer to \cite{goldmancompact} for more information and history about compact components of planar surface group representations.

However, there is still a family of classical Hermitian Lie groups, $\SO(2,q)$, where $q\geqslant 3$, that are not covered by Tholozan--Toulisse's results. They indicated one may look directly at parabolic $\SO(2,q)$-Higgs bundles and carry out the same analysis to get similar results. 

In this article, we consider $\SO_0(2,q)$, the identity component of $\SO(2,q)$, as our group since its corresponding parabolic Higgs bundle will be easier to analyze. We will prove that there are some compact relative components in $\SO_0(2,q)$-character variety of $\Sigma_{0,s}$ which have similar properties as Deroin--Tholozan's components consisting of supra-maximal representations:

\begin{theorem}\label{thm:main}
    For any $s\geqslant3$, there exists a tuple $h=(h_1,\dots,h_s)\in T^s$, where $T$ is a fixed maximal torus of $\SO_0(2,q)$ such that the relative character variety $\mathfrak{X}_h(\Sigma_{0,s},\SO_0(2,q))$ is compact and satisfying the following properties: \begin{itemize}
        \item[(1)] It consists of totally non-hyperbolic representations, i.e. for any $[\rho]\in \mathfrak{X}_h(\Sigma_{0,s},\SO_0(2,q))$ and the homotopy class $[c]$ of any simple closed curve $c$ on $\Sigma_{0,s}$, all eigenvalues of $\rho([c])$ have modulus $1$;

        \item[(2)] It contains a dense representation, i.e. its image is dense under the Euclidean topology of $\SO_0(2,q)$;

        \item[(3)] For any $[\rho]\in \mathfrak{X}_h(\Sigma_{0,s},\SO_0(2,q))$ and every identification of $\Sigma_{0,s}$ with an $s$-punctured Riemann sphere, there is a holomorphic $\rho$-equivariant harmonic map from the universal cover $\widetilde{\Sigma_{0,s}}$ of $\Sigma_{0,s}$ to the symmetric space $(\SO(2)\times\SO(q))\backslash\SO_0(2,q)$. 
    \end{itemize} 
    In particular, $\mathfrak{X}_h(\Sigma_{0,s},\SO_0(2,q))$ has a compact connected component with the above properties. Moreover, there exists an open neighborhood $W$ of the identity element in $T^{s-1}$ such that there is a full measure subset $W^\prime\subset W$, satisfying that for any $(h_1,\dots,h_{s-1})\in W^\prime$, there exists $h_s\in T$ such that the relative character variety $\mathfrak{X}_h(\Sigma_{0,s},\SO_0(2,q))$ for $h=(h_1,\dots,h_s)$ has a compact connected component which satisfies the properties above.
\end{theorem}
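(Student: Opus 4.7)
The plan is to apply the non-Abelian Hodge correspondence (NAHC) for parabolic Higgs bundles on the $s$-punctured Riemann sphere $(\mathbb{CP}^1,D)$, converting the study of $\mathfrak{X}_h(\Sigma_{0,s},\SO_0(2,q))$ into the study of the moduli space $\mathcal{M}_\alpha$ of $\alpha$-polystable parabolic $\SO_0(2,q)$-Higgs bundles, for a weight system $\alpha=(\alpha_1,\dots,\alpha_s)$ chosen to match the prescribed monodromy $h$. Unpacking the structure group, such a Higgs bundle decomposes into a parabolic line bundle $L$ (the $\SO(2,\mathbb{C})$-factor, giving $V=L\oplus L^{-1}$), a parabolic orthogonal bundle $W$ of rank $q$ with trivial determinant, and two off-diagonal Higgs components $\beta_+\colon L\to W\otimes K(D)$ and $\beta_-\colon L^{-1}\to W\otimes K(D)$. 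The pivotal step is to choose $\alpha$ so that $\alpha$-polystability forces $\deg^{\mathrm{par}}(L)$ to saturate a Milnor--Wood-type bound and forces $\beta_-\equiv 0$, mimicking the Tholozan--Toulisse analysis for $\mathrm{SU}(p,q)$. On this locus the spectral data of $\beta_+$ becomes discrete, the Hitchin base collapses, and the moduli space reduces to an explicit finite-dimensional parameter space.

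For compactness, I would realize $\mathcal{M}_\alpha$ as a GIT quotient of a projective scheme parametrizing these parabolic data, producing a projective variety over $\mathbb{C}$; together with the homeomorphism supplied by NAHC this gives compactness of $\mathfrak{X}_h$. Property (1) then follows from two observations: the peripheral monodromies lie in the compact torus $T$ by the NAHC dictionary, and any non-elliptic interior simple closed curve would produce a divergent sequence in $\mathcal{M}_\alpha$, contradicting compactness. Property (3) is a direct consequence of the vanishing of $\beta_-$: a parabolic $\SO_0(2,q)$-Higgs bundle with one off-diagonal Higgs component identically zero corresponds under NAHC to a $\rho$-equivariant harmonic map into the Hermitian symmetric space $(\SO(2)\times\SO(q))\backslash\SO_0(2,q)$ whose anti-holomorphic differential vanishes, and such a map is automatically holomorphic.

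The most delicate step is property (2), the existence of a representation with dense image. Within the explicit model for the compact component of $\mathcal{M}_\alpha$, I would exhibit a specific parabolic Higgs bundle whose associated representation has image avoiding every proper closed subgroup of $\SO_0(2,q)$: this requires a Lie-algebraic argument on the residues of the Higgs field showing that the monodromy subalgebra is all of $\so(2,q)$, followed by an upgrade from Zariski-density to Euclidean density using the compact-type structure of the maximal torus. For the final genericity statement, the open conditions cutting out the compact component persist on a small open neighborhood $W$ of the identity in $T^{s-1}$, with $h_s$ determined by the other weights via the product-one constraint on conjugacy classes. The full-measure subset $W'\subset W$ is then extracted by removing a countable collection of positive-codimension loci in $W$ where either polystability degenerates or density fails; controlling this last density locus is the main technical obstacle of the proof.
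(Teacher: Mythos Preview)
Your overall architecture is right: the paper also passes through NAHC, writes the Higgs bundle as $\mathcal{L}^\vee\oplus\mathcal{L}\oplus\mathcal{V}$ with two off-diagonal pieces $\eta,\gamma$, and chooses weights so that semistability forces $\eta\equiv 0$ (your $\beta_-$). Compactness is obtained in the paper primarily from the properness of the Hitchin fibration (the nilpotent $\Phi$ sits over $0$), with the GIT description as a separate, structural add-on; your route via GIT alone is fine but you should be aware that the paper uses both.

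There are, however, two genuine gaps.

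\textbf{Total ellipticity.} Your argument ``a non-elliptic interior simple closed curve would produce a divergent sequence in $\mathcal{M}_\alpha$, contradicting compactness'' does not work. A single representation is a single point of the relative character variety; a hyperbolic value of $\rho([c])$ for an interior curve $c$ does not by itself give any family escaping to infinity, because the relative variety only constrains the peripheral classes. In the paper (and in Tholozan--Toulisse) total ellipticity is deduced \emph{from} property~(3), not from compactness: once the equivariant harmonic map $f\colon\widetilde{\Sigma_{0,s}}\to\mathcal{Y}$ is holomorphic and $\mathcal{Y}$ is realized as a bounded symmetric domain via Harish-Chandra, $f$ contracts the Kobayashi metric, and this forces every $\rho([c])$ to act with bounded orbits on $\mathcal{Y}$, hence to be elliptic. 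You have the ingredients for this argument (you correctly derive~(3) from $\beta_-=0$), but you are invoking the wrong mechanism for~(1).

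\textbf{Dense image.} Your plan to exhibit a specific Higgs bundle and read off density from the residues is not feasible: the NAHC is transcendental, so the monodromy cannot be computed from local Higgs data, and residues only see the peripheral conjugacy classes, which are all elliptic by design. The paper instead proves that as the weights $(\alpha,\beta)$ range over an open set $\mathcal{W}'$, the union $\Omega'=\bigcup_{(\alpha,\beta)\in\mathcal{W}'}\mathfrak{X}_{h(\alpha,\beta)}^{|\alpha|-1}$ is \emph{open in the absolute character variety} $\mathfrak{X}(\Sigma_{0,s},\SO_0(2,q))$, and then quotes Winkelmann's theorem that for a connected semisimple real Lie group, tuples near the identity generically generate a dense subgroup. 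This is what produces the full-measure subset $W'\subset W$ in the last clause of the theorem; your proposed construction would at best give a single point, not the genericity statement.

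Finally, you do not address the case $3\leqslant s<q+2$. The paper first proves everything for $s\geqslant q+2$ (where one can choose $A_i$'s spanning $\mathbb{C}^q$ to get a stable point), and then for smaller $s$ cuts $\Sigma_{0,q+2}$ along a separating curve $b$ into $\Sigma'\cong\Sigma_{0,s}$ and $\Sigma''$, and uses the restriction map $\operatorname{Res}$ together with additivity of the Toledo invariant to push the compact component down to $\mathfrak{X}_{h'}(\Sigma_{0,s},\SO_0(2,q))$. Without this step your argument only covers $s\geqslant q+2$.
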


Similarly to \cite{mondello_2018} and \cite{tholozan2021compact}, we first prove the above theorem in the language of Higgs bundles and then translate it into the language of representations through the non-abelian Hodge correspondence. Hence we also state our theorem in the language of Higgs bundles below. Geometric Invariant Theory is also relevant to this result. In \cite{tholozan2021compact}, N. Tholozan and J. Toulisse proved that the compact relative component they found is isomorphic to a feathered Kronecker variety which is a GIT quotient of a $\mathrm{GL}(p,\mathbb{C})\times\mathrm{GL}(q,\mathbb{C})$-action by using flag configuration. Similarly, the compact relative component we obtain in this article is also a projective variety which is constructed as a GIT quotient, and its construction is like an isotropic analogue of the feathered Kronecker variety. We will soon see that a choice of the weight of a parabolic $\SO_0(2,q)$-Higgs bundle is equivalent to a choice of an $\SO_0(2,q)$-weight (\prettyref{defn:so02qweight}) $(\alpha,\beta)$ in \prettyref{sec:so02q}. Under this setting, we prove that:

\begin{restatable}{theorem}{main}\label{thm:main2}
    For any $\SO_0(2,q)$-weight $(\alpha,\beta)$ satisfying \begin{itemize}[leftmargin=2em, itemindent=1em]
    \item[(W1)] $\alpha^j>\beta_1^j$ for all $1\leqslant j\leqslant s$;

    \item[(W2)] $|\alpha|>|\beta|$;

    \item[(W3)] $|\alpha|+|\beta|<1$,
\end{itemize} the moduli space $\mathcal{M}(\alpha,\beta)$ of polystable parabolic $\SO_0(2,q)$-Higgs bundles with weight $(\alpha,\beta)$ over the complex projective line $\mathbb{C}P^1$ with $s$ punctures is compact. Furthermore, it is a projective variety which can be realized as a GIT quotient of an $\SO(2,\mathbb{C})\times\SO(q,\mathbb{C})$-action with suitable linearization. Moreover, when $s\geqslant q+2$, it contains a stable point.
\end{restatable}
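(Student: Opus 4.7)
The plan is to follow the template of Tholozan--Toulisse's treatment of $\mathrm{SU}(p,q)$ in \cite{tholozan2021compact}, adapting it to the orthogonal geometry of $\SO_0(2,q)$. A parabolic $\SO_0(2,q)$-Higgs bundle on $(\mathbb{C}P^1,D)$ will consist of two parabolic orthogonal bundles $(V,\alpha)$ of rank $2$ and $(W,\beta)$ of rank $q$, together with a Higgs field $\phi$ whose two components are maps $V\to W\otimes K(D)$ and $W\to V\otimes K(D)$ related by the orthogonal forms. I would attack the three claims---compactness, realization as a GIT quotient, and existence of a stable point---in that order.

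For compactness, I would argue that the hypotheses $|\beta^j|<\alpha^j$ and $|\alpha|+|\beta|<1$ severely constrain the parabolic degrees of Higgs-invariant isotropic subbundles. Since $V$ has rank $2$ and carries an orthogonal form, its isotropic subbundles are at most line bundles, and similarly every isotropic subbundle of $W$ has rank at most one; combined with parabolic polystability and $\mathrm{pdeg}(V)=\mathrm{pdeg}(W)=0$, this should force the underlying holomorphic splitting types into finitely many possibilities, with the Higgs field valued in a bounded subspace (and possibly forced to vanish in some components). Compactness of $\mathcal{M}(\alpha,\beta)$ then follows from the usual properness of the polystable Higgs moduli functor, together with the fact that every orthogonal bundle on $\mathbb{C}P^1$ splits into line bundles. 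The main obstacle lies precisely here: the orthogonal adjoint constraint linking the two components of $\phi$ removes the independence exploited in the unitary case, so obtaining sharp slope inequalities requires more delicate bookkeeping than in \cite{tholozan2021compact}.

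For the GIT construction, I would exploit that on $\mathbb{C}P^1$ a parabolic orthogonal bundle of known splitting type is determined up to isomorphism by the isotropic flag data at the punctures, so the moduli problem reduces to a finite-dimensional configuration space---a product of isotropic Grassmannians in $\mathbb{C}^2$ and $\mathbb{C}^q$ over the $s$ punctures, together with the finite-dimensional space of Higgs fields---on which $\SO(2,\mathbb{C})\times\SO(q,\mathbb{C})$ acts naturally. The parabolic weight $(\alpha,\beta)$ defines a linearization of an ample line bundle, and, following the Mehta--Seshadri dictionary, I would verify that GIT (semi)stability for this linearization coincides with parabolic polystability of the Higgs bundle, endowing $\mathcal{M}(\alpha,\beta)$ with the structure of a projective GIT quotient.

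Finally, for the existence of a stable point when $s\geq q+2$, I would give an explicit construction from generic isotropic flag data. Stability can be tested against a finite list of candidate destabilizing subbundles: a generic choice of $s$ isotropic lines in $\mathbb{C}^q$, with $s\geq q+2$, cannot all lie in any proper isotropic-compatible subspace, so combining this with a generic choice of flags in $V$ and a generic choice of Higgs field produces a strictly parabolic-stable object, completing the proof.
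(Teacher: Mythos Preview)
Your outline has the right overall shape, but there are two concrete gaps that would prevent the argument from going through as stated.

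\textbf{Compactness.} Your claim that ``every isotropic subbundle of $W$ has rank at most one'' is false for $q\geqslant 3$: isotropic subbundles of a rank-$q$ orthogonal bundle can have rank up to $\lfloor q/2\rfloor$, so the slope bookkeeping you sketch does not immediately pin down the splitting type. More importantly, the paper's mechanism for compactness is not ``bound the splitting types and bound the Higgs field directly,'' but rather a vanishing argument: writing $\Phi$ with components $\eta\colon\mathcal{V}\to\mathcal{L}^\vee\otimes\mathcal{K}(D)$ and $\gamma\colon\mathcal{V}\to\mathcal{L}\otimes\mathcal{K}(D)$, one first shows $\eta\circ\eta^*\in\mathrm{H}^0((\mathcal{L}^\vee\otimes\mathcal{K})^2)$ has negative degree under the hypotheses, hence vanishes; then semistability applied to the image of $\eta^*$ forces $\eta=0$. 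This makes $\Phi$ nilpotent, and compactness follows from \emph{properness of the Hitchin fibration}. You gesture at ``possibly forced to vanish in some components,'' which is the right intuition, but without the explicit degree computation and the Hitchin-fibration step the argument is incomplete. (Once $\eta=0$, one does then pin down $\mathcal{L}\cong\mathcal{O}(-1)$ and $\mathcal{V}\cong\mathcal{O}^{\oplus q}$ by semistability, as you anticipate, but this is used for the GIT identification, not for compactness.)

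\textbf{The stable point.} Your proposed construction---``a generic choice of $s$ isotropic lines in $\mathbb{C}^q$ cannot all lie in any proper isotropic-compatible subspace''---misidentifies where the condition $s\geqslant q+2$ enters. After the reductions above, the Higgs field is determined by $\gamma\in\mathrm{H}^0(\mathrm{Hom}(\mathcal{O}^{\oplus q},\mathcal{O}(-1))\otimes\mathcal{K}(D))\cong(\mathbb{C}^{1\times q})^{s-2}$, i.e.\ by $(s-2)$ row vectors $A_1,\dots,A_{s-2}$, and the stability criterion asks that no isotropic or coisotropic proper subspace of $\mathbb{C}^q$ contain all $A_i^{\mathrm{t}}$ (with a parabolic-degree condition in the coisotropic case). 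The bound $s\geqslant q+2$, i.e.\ $s-2\geqslant q$, is exactly what lets you choose the $A_i^{\mathrm{t}}$ to span $\mathbb{C}^q$. Genericity of the isotropic \emph{flag} data plays no role here; the stable point comes from the Higgs field.

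Your GIT sketch is broadly correct in spirit and close to what the paper does: the configuration space is $(\mathbb{C}^{1\times q})^{s-2}\times\mathcal{IF}(\mathbb{C}^q)^s$ with the diagonal $\SO(2,\mathbb{C})\times\SO(q,\mathbb{C})$-action, the linearization is built from rational multiples of the weights, and one checks via Hilbert--Mumford that GIT-(semi)stability matches parabolic (semi)stability.
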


We should emphasize that the key ideas behind the proofs in \prettyref{sec:cc} and \prettyref{sec:ccrep} --- particularly the choice of suitable parabolic weights to enforce the nilpotency of the Higgs field --- are inspired by \cite{tholozan2021compact}. However, due to the orthogonal structure in our setting, several technical complications arise. Unlike in \cite{tholozan2021compact}, where the nilpotent Higgs field has only one non-vanishing block, ours retains two, significantly increasing the complexity of the stability condition.

Moreover, the stability conditions for parabolic $\SO_0(2,q)$-Higgs bundles and parabolic $\SO(2+q,\mathbb{C})$-Higgs bundles do not coincide. Consequently, the GIT construction must also be adjusted by considering isotropic flag configurations rather than ordinary ones. For further details, we refer to \prettyref{rem:diferrentstability}, \prettyref{rem:differentGIT} and \prettyref{rem:differentGITbasis}.

\paragraph{Structure of the article} We first recall some basic Lie theory of $\SO_0(2,q)$, the definition of character variety and parabolic $\mathrm{SL}(n,\mathbb{C})$-Higgs bundles in \prettyref{sec:basic}. Then we translate parabolic $\SO_0(2,q)$-Higgs bundles and their stability into the language of vector bundles in \prettyref{sec:transtovb}. In \prettyref{sec:cc}, we will prove \prettyref{thm:main2} and give the GIT construction explicitly. Finally, we conclude \prettyref{thm:main} for $s\geqslant q+2$ first and then prove it for general case by restricting to a subsurface in \prettyref{sec:ccrep}. 

\paragraph{Acknowledgements}
We are grateful to Qiongling Li for suggesting this problem and many helpful discussions. We warmly thank Oscar Garc\'ia-Prada for his explanation on his joint work \cite{biquard2020parabolic}. We also thank Hao Sun for lots of interesting discussions on the Betti moduli space and the non-abelian Hodge correspondence. We wish to thank the referee for their valuable comments, which helped to improve the exposition substantially. Both authors are partially supported by the National Key R\&D Program of China No. 2022YFA1006600, the Fundamental Research Funds for the Central Universities and Nankai Zhide Foundation.

\section{Preliminaries}\label{sec:basic}

\subsection{Character varieties}

In this subsection, we fix a real reductive Lie group $G$ with its Lie algebra $\mathfrak{g}:=\operatorname{Lie}(G)$ and maximal compact subgroup $H$, Cartan involution $\theta$ and invariant bilinear form $B=\langle\cdot,\cdot\rangle$ on $\mathfrak{g}$. Given an element $g\in G$, we denote $C(g)$ by the conjugacy class of $g$ in $G$. Let $\Sigma_{g,s}$ denote the oriented surface of genus $g$ with $s$ punctures. We denote by $\Gamma_{g,s}:=\pi_1(\Sigma_{g,s})$ its fundamental group. 

\begin{definition}
    A homomorphism $\rho\colon\Gamma_{g,s}\to G$ is called a \textbf{reductive representation} if $\operatorname{Ad}\circ\rho\colon \Gamma_{g,s}\to\mathrm{GL}(\mathfrak{g})$ decomposes as a direct sum of irreducible representations, that is, it is completely reducible.
\end{definition}

Let $\mathrm{Hom}^+(\Gamma_{g,s},G)$ the set of all reductive representations of $\Gamma_{g,s}$ in $G$. There is a moduli space of reductive representations of $\Gamma_{g,s}$ in $G$, called \textbf{(absolute) character variety}, defined as
\[\mathfrak{X}(\Sigma_{g,s},G):=\mathrm{Hom}^+(\Gamma_{g,s},G)/G,\]
where $G$ acts on $\operatorname{Hom}^+(\Gamma_{g,s},G)$ by conjugation. $\operatorname{Hom}^+(\Gamma_{g,s},G)$ and $\mathfrak{X}(\Sigma_{g,s},G)$ are equipped with the topology induced from $G$.

Let $c_1,\dots,c_s$ denote homotopy classes of loops going counter-clockwise around the punctures of $\Sigma_{g,s}$. And we fix an $h=(h_1,\dots,h_s)\in G^s$.

\begin{definition}
    A homomorphism $\rho\colon\Gamma_{g,s}\to G$ is of type $h$ if $\rho(c_j)\in C(h_j)$ for all $1\leqslant j\leqslant s$. The \textbf{relative character variety of type $h$} is defined as
$$\mathfrak{X}_h(\Sigma_{g,s},G):=\{\rho\in\mathrm{Hom}^+(\Gamma_{g,s},G)\mid \rho(c_j)\in C(h_j),j=1,\dots,s\}/G.$$
\end{definition}

\subsection{Basic Lie theory of \texorpdfstring{$\mathrm{SO}_0(2,q)$}{SO0(2,q)}}\label{sec:so02q}
We first use the standard non-degenerate bilinear form
$$\begin{aligned}
Q\colon \mathbb{R}^{2+q}\times\mathbb{R}^{2+q}&\longrightarrow\mathbb{R}\\
\left(\begin{pmatrix}
x_1\\ \vdots\\x_{2+q}\end{pmatrix},\begin{pmatrix}
y_1\\ \vdots\\y_{2+q}\end{pmatrix}
\right)&\longmapsto -x_1y_1-x_2y_2+\sum_{i=1}^qx_{2+i}y_{2+i}
\end{aligned}$$
of signature $(2,q)$ to get the group
$$\begin{aligned}\mathrm{SO}(2,q)&=\left\{A\in \mathrm{SL}(2+q,\mathbb{R})\mid Q(x,y)=Q(Ax,Ay),\forall x,y\in\mathbb{R}^{2+q}\right\}\\&=\left\{A\in\mathrm{SL}(2+q,\mathbb{R})\mid A^{\mathrm{t}}I_{2,q}A=I_{2,q}\right\},\end{aligned}$$
where $$I_{2,q}=\begin{pmatrix}
    -I_2& 0\\0& I_q
\end{pmatrix}.$$
Then $\mathrm{SO}_0(2,q)$ is defined as the identity component of $\mathrm{SO}(2,q)$. Its Lie algebra is
$$\begin{aligned}
    \so(2,q):=&\operatorname{Lie}(\mathrm{SO}_0(2,q))=\{A\in\mathfrak{sl}(2+q,\mathbb{R})\mid A^{\mathrm{t}}I_{2,q}+I_{2,q}A=0\}\\
    =&\left\{\begin{pmatrix}
        A_{11}& A_{12}\\A_{21}& A_{22}
    \end{pmatrix}\in\mathfrak{sl}(2+q,\mathbb{R})\Bigg| A_{11}+A_{11}^{\mathrm{t}}=0,A_{22}+A_{22}^{\mathrm{t}}=0,A_{21}=A_{12}^{\mathrm{t}}\right\}
\end{aligned}$$
Below we denote that $G=\SO_0(2,q)$, $\mathfrak{g}=\so(2,q)$. We fix $H=\SO(2)\times\SO(q)$ as the maximal compact subgroup of $G$, and $\mathfrak{h}:=\operatorname{Lie}(H)=\so(2)\oplus\so(q)$. To get the complexified Cartan decomposition, we use the isomorphism between $\mathfrak{g}^{\mathbb{C}}$ and $\so(2+q,\mathbb{C})$. Explicitly, $\so(2,q)$ can be viewed as a subalgebra of $\so(2+q,\mathbb{C})$ via the map
$$\begin{aligned}
    \begin{pmatrix}
        A_{11}& A_{12}\\A_{21}& A_{22}
    \end{pmatrix}\longmapsto\begin{pmatrix}
        A_{11}& \iu A_{12}\\-\iu A_{21}& A_{22}
    \end{pmatrix},
\end{aligned}$$
which means changing $Q$ in to the standard inner product on $\mathbb{C}^{2+q}$ by multiplying $\iu$ on the first two coordinates. Thus the complexified Cartan decomposition of $\mathfrak{g}$ can be expressed as
\[\begin{tikzcd}
	{\mathfrak{g}^{\mathbb{C}}\cong\so(2+q,\mathbb{C})} & {\mathfrak{h}^{\mathbb{C}}} & {\mathfrak{m}^{\mathbb{C}}} \\
	{\begin{pmatrix}         A_{11}& \iu A_{12}\\-\iu A_{21}& A_{22}     \end{pmatrix}} & {\begin{pmatrix}         A_{11}& 0\\0& A_{22}     \end{pmatrix}} & {\begin{pmatrix}         0&\iu A_{12}\\-\iu A_{21}& 0     \end{pmatrix}}
	\arrow["\in"{marking}, draw=none, from=2-1, to=1-1]
	\arrow["\oplus"{marking, pos=0.5}, draw=none, from=1-2, to=1-3]
	\arrow["\in"{marking}, draw=none, from=2-2, to=1-2]
	\arrow["{+}"{marking, pos=0.6}, draw=none, from=2-2, to=2-3]
	\arrow["\in"{marking}, draw=none, from=2-3, to=1-3]
	\arrow["{=}"{marking, pos=0.25}, draw=none, from=1-1, to=1-2]
	\arrow["{=}"{marking, pos=0.5}, draw=none, from=2-1, to=2-2]
\end{tikzcd}\]
where $A_{11}$ and $A_{22}$ are skew-symmetric complex matrices and $A_{12}=A_{21}^{\mathrm{t}}$ is a complex $(2\times q)$-matrix. Therefore we can view $\mathfrak{m}^{\mathbb{C}}$ as
$$\left\{\begin{pmatrix}
    0&B\\-B^{\mathrm{t}}&0
\end{pmatrix}\Bigg|B\in\mathbb{C}^{2\times q}\right\}.$$

Now we fix a Cartan subalgebra \begin{equation}\label{eq:Cartan}
    \mathfrak{t}=\begin{cases}
\left\{\left.T(2\pi\alpha,2\pi\beta_i)=2\pi\cdot\begin{pmatrix}
    M_{\alpha}& & &\\
    & M_{\beta_1}& &\\
    & &\ddots&\\
    & & & M_{\beta_n}
\end{pmatrix}\right|\alpha,\beta_i\in\mathbb{R}\right\}& \mbox{when }q=2n\\
&\\
\left\{\left.T(2\pi\alpha,2\pi\beta_i)=2\pi\cdot\begin{pmatrix}
    M_{\alpha}& & & &\\
    & M_{\beta_1}& & &\\
    & &\ddots& &\\
    & & & M_{\beta_n}&\\
    & & & & 0
\end{pmatrix}\right|\alpha,\beta_i\in\mathbb{R}\right\}& \mbox{when }q=2n+1
\end{cases}
\end{equation}
where $$M_\lambda=\begin{pmatrix}
    0&-\lambda\\ \lambda& 0
\end{pmatrix},\forall\lambda\in\mathbb{C}.$$
Then the corresponding root system of $(\mathfrak{h},\mathfrak{t})$ is
$$\Delta=\begin{cases}
    \{\pm e_i\pm e_j\mid 1\leqslant i<j\leqslant n\}& \mbox{when }q=2n\\
    \{\pm e_i\pm e_j\mid 1\leqslant i<j\leqslant n\}\cup\{\pm  e_k\mid 1\leqslant k\leqslant n\}& \mbox{when }q=2n+1
\end{cases}\subset\mathfrak{t}^\vee,$$
where $e_i$ maps $T(2\pi\alpha,2\pi\beta_i)$ to $\beta_i$. Now we fix the corresponding system of positive real roots is
$$\Delta^+=\begin{cases}
    \{e_i\pm e_j\mid 1\leqslant i<j\leqslant n\}& \mbox{when }q=2n\\
    \{e_i\pm e_j\mid 1\leqslant i<j\leqslant n\}\cup\{e_k\mid 1\leqslant k\leqslant n\}& \mbox{when }q=2n+1
\end{cases}\subset\mathfrak{t}^\vee.$$

\subsection{Parabolic Higgs bundles}\label{sec:GHiggs}

In \cite{biquard2020parabolic}, O. Biquard, O. Garc\'{i}a-Prada and I. Mundet i Riera introduced the concept of parabolic $G$-Higgs bundles over a Riemann surface $X$ with marked points $D$ for an arbitrary real reductive group $(G,H,\theta,B=\langle\cdot,\cdot\rangle)$. Let $X$ be a closed Riemann surface with finite marked points $\{x_i\}_{i=1}^s=:D$ on it (we will also use $D$ to denote the divisor $\sum_{i=1}^sx_i$ on $X$). Denote by $\KK$ the canonical line bundle of $X$. Any vector bundle or principal bundle we mention below is holomorphic. We mainly focus on the case $G=\mathrm{SL}(n,\mathbb{C})$ in this section. 

\begin{definition}
            Suppose $V$ is a $\mathbb{C}$-linear space. A sequence of subspaces of $V$
            \[0=F_{k}\subsetneq F_{k-1}\subsetneq\cdots\subsetneq F_2 \subsetneq F_{1}=V, \quad(\mbox{resp. }0=F_{1}\subsetneq F_{2}\subsetneq\cdots \subsetneq F_{k-1}\subsetneq F_{k}=V)\]
            is called a \textbf{reverse flag} (resp. \textbf{flag}). If $V$ is equipped with a bilinear form $Q$, then the above reverse flag (resp. flag) is called a \textbf{reverse isotropic flag} (resp. \textbf{isotropic flag}) if every $F_i$ is isotropic or coisotropic under $Q$ and $F_i=(F_{k+1-i})^{\perp_Q}$.
            
            We say a reverse flag $\left(F_i\right)_{i=1}^k$ is \textbf{monotonically weighted} by a tuple of real numbers $\left(\alpha_j\right)_{j=1}^{\dim(V)}$ if $\alpha_j\geqslant\alpha_{j+1}$, 
            \[\alpha_{\dim(V)-\dim(F_i)+1}=\alpha_{\dim(V)-\dim(F_i)+2}=\cdots=\alpha_{\dim(V)-\dim(F_{i+1})}=:\widetilde{\alpha}_i\]
            and $\widetilde{\alpha}_i>\widetilde{\alpha}_{i+1}$
            for any $i=1,\dots,k-1$. 
            
            A basis $\{e_1,\dots,e_{\dim(V)}\}$ of $V$ is called \textbf{compatible with a reverse flag $\left(F_i\right)_{i=1}^{k}$} if \[e_{\dim(V)-\dim(F_{i})+1},\dots,e_{\dim(V)}\] span $F_i$ for any $i=1,\dots,k-1$.
        \end{definition}

        For our convenience, we usually set $\widetilde{\alpha}_0=\alpha_0=\alpha_{\dim(V)+1}=0$.

For a parabolic $\mathrm{SL}(n,\mathbb{C})$-Higgs bundle $(\mathbb{E},\Phi)$, from the viewpoint of the vector bundle $\mathbb{E}[\mathbb{C}^n]$ induced by the standard representation, we obtain that a parabolic $\mathrm{SL}(n,\mathbb{C})$-Higgs bundle is equivalent to the following data (see \cite[Example 2.5 \& Section 4.1]{biquard2020parabolic}):
\begin{itemize}
    \item[(1)] a holomorphic vector bundle $\EE\to X$, with $\operatorname{rank}(\EE)=n$ and $\det(\EE)=\mathcal{O}$ which is the trivial line bundle;

    \item[(2)] a reverse flag $\EE^j=\left(\EE_i^j\right)$ of $\EE_{x_j}$ monotonically weighted by $\alpha^j=\left(\alpha_i^j\right)_{1\leqslant i\leqslant n}$ satisfying that $\alpha_i^j\in(-1/2,1/2)$ and 
    \[\sum_{i=1}^n\alpha_i^j=0\]
    for every marked point $x_j\in D$. Denote ${\alpha}:=(\alpha^j)_{j=1}^s$; 
    
    \item[(3)]
    a Higgs field $\Phi$ which is a traceless holomorphic section of $\operatorname{End}(\mathcal{E})\otimes\mathcal{K}(D)$ satisfying that
    \begin{equation}\label{eq:Higgsfield}
        \Phi=\sum_{k=1}^p\sum_{l=1}^qO\left(z^{\left\lceil\alpha_k^j-\alpha_l^j\right\rceil}\right)\cdot e_k^\vee\otimes e_l\otimes\dfrac{\dd z}{z},
    \end{equation}
     on a coordinate chart $(U,z)$ centered at $x_j$ for every $x_j\in D$, where $\{e_1,\dots,e_n\}$ is a holomorphic frame  compatible with the reverse flag $\left(\mathcal{E}^j\right)$.
\end{itemize}

Now an automorphism of a parabolic $\mathrm{SL}(n,\mathbb{C})$-Higgs bundle $(\EE,\EE^j,\alpha,\Phi)$ is an automorphism of $\EE$ which stabilizes $\Phi$ and preserves the reverse flags $\EE^j$.

The parabolic degree defined below is used to test the stability condition.

\begin{definition}
            For any holomorphic subbundle $\mathcal{E}^\prime$ of a parabolic $\mathrm{SL}(n,\mathbb{C})$-Higgs bundle $(\mathcal{E},\mathcal{E}^j,\alpha,\Phi)$, we define the \textbf{parabolic degree} of $\mathcal{E}^\prime$ as

    \[\operatorname{pardeg}(\mathcal{E}'):=\deg(\mathcal{E}')-\sum_{j=1}^{\deg(D)}\sum_{i}(\widetilde{\alpha}_{i}^j-\widetilde{\alpha}_{i-1}^j)\dim\left(\left(\mathcal{E}'\right)_{x_j}\cap\mathcal{E}_i^j\right),\]
    where we assume $\widetilde{\alpha}_{0}^j=0$.
\end{definition}

\begin{remark}
    Note that here we use ``$-$'' connecting the degree part and the parabolic part instead of ``$+$'' since we use the reverse flag and decreasing weights. %Although we have $\widetilde{\alpha}_{1}^j>\widetilde{\alpha}_{2}^j>\cdots$, we usually do not have $\operatorname{pardeg}(\mathcal{E}^\prime)>\deg(\mathcal{E}^\prime)$ since $\widetilde{\alpha}_{1}^j$ may be larger than $\widetilde{\alpha}_{0}^j=0$.
\end{remark}

\begin{definition}
    A parabolic $\mathrm{SL}(n,\mathbb{C})$-Higgs bundle $(\mathcal{E},\mathcal{E}^j,\alpha,\Phi)$ is \textbf{stable} if for any non-trivial holomorphic subbundle $\EE^\prime\subset\EE$ which is $\Phi$-invariant, $\operatorname{pardeg}(\EE')<0$.
\end{definition}

\section{Parabolic \texorpdfstring{$\mathrm{SO}_0(2,q)$}{SO0(2,q)}-Higgs bundles via vector bundles}\label{sec:transtovb}

In this section, we apply the theory for general $G$-Higgs bundles (see \cite{biquard2020parabolic}) to $G=\mathrm{SO}_0(2,q)$, where $q$ is an integer larger than $2$, and give a translation of parabolic $\SO_0(2,q)$-Higgs bundle and its stability via vector bundle in \prettyref{prop:vbofSO02q} and \prettyref{prop:stability}. 

Let $G$ be a real reductive Lie group with maximal compact subgroup $H$, Cartan involution $\theta$ and invariant bilinear form $B=\langle\cdot,\cdot\rangle$ on $\mathfrak{g}=\operatorname{Lie}(G)$. Given the Cartan decomposition $\mathfrak{g}=\mathfrak{h}\oplus\mathfrak{m}$. A parabolic $G$-Higgs bundle over $(X,D)$ consists of a parabolic principal $H^\mathbb{C}$-bundle $\mathbb{E}$ with parabolic structures $(Q_j,\alpha^j)$ over $(X,D)$ (see \cite[Section 2.1]{biquard2020parabolic} for its definition) and a parabolic $G$-Higgs field $\Phi\in\mathrm{H}^0(P\mathbb{E}(\mathfrak{m}^{\mathbb{C}})\otimes\KK(D))$, where $P\mathbb{E}(\mathfrak{m}^{\mathbb{C}})$ denotes the sheaf of parabolic sections of $\mathbb{E}(\mathfrak{m}^\mathbb{C})$ (see \cite[Section 4.1]{biquard2020parabolic} for its definition).

\subsection{Parabolic subgroups of \texorpdfstring{$\SO(2,\mathbb{C})\times\SO(q,\mathbb{C})$}{SO(2,C)×SO(q,C)}}

Suppose the ordinary basis of $\mathbb{C}^{2+q}=\mathbb{C}^2\oplus\mathbb{C}^q$ is 
$$\mathcal{B}=\begin{cases}
    \{v,v',v_1,v_1',\dots,v_n,v_n'\}& \mbox{when }q=2n\\
    \{v,v',v_1,v_1',\dots,v_n,v_n',v_{n+1}\}& \mbox{when }q=2n+1\\
\end{cases},$$
we change it into a new basis
$$\mathcal{B}'=\begin{cases}
    \{v+\iu v',v-\iu v',v_1+\iu v_1',\dots,v_n+\iu v_n',v_n-\iu v_n',\dots,v_1-\iu v_1'\}& \mbox{when }q=2n,\\
    \{v+\iu v',v-\iu v',v_1+\iu v_1',\dots,v_n+\iu v_n',\sqrt{2}v_{n+1},v_n-\iu v_n',\dots,v_1-\iu v_1'\}& \mbox{when }q=2n+1.
\end{cases}$$
Note that every element, except $\sqrt{2}v_{n+1}$, in $\mathcal{B}'$ is isotropic. Under this basis, an element $\tau=T(2\pi\alpha,2\pi\beta_i)\in\mathfrak{t}$ defined in \prettyref{sec:so02q} can be diagonalized as
$$\begin{cases}
    \operatorname{diag}(\alpha,-\alpha,\beta_1,\dots,\beta_n,-\beta_n,\dots,-\beta_1)& \mbox{when }q=2n\\
    \operatorname{diag}(\alpha,-\alpha,\beta_1,\dots,\beta_n,0,-\beta_n,\dots,-\beta_1)& \mbox{when }q=2n+1
\end{cases}$$
We set $\beta_{q+1-i}=-\beta_{i}$ for $1\leqslant i\leqslant \lceil q/2\rceil$, then we can always say $\tau$ is $\operatorname{diag}(\alpha,-\alpha,\beta_1,\dots,\beta_q)$ under $\mathcal{B}'$.

Now we consider the parabolic subgroup $P_{\tau}=\{g\in H^\mathbb{C}\mid\operatorname{Ad}(\exp(t\tau))(g)\mbox{ is bounded}\}$ of $H^{\mathbb{C}}=\SO(2,\mathbb{C})\times\SO(q,\mathbb{C})$ defined by $\tau\in\iu\mathfrak{h}\setminus\{0\}$. Since $\tau$ can be expressed as $\operatorname{diag}(\alpha,-\alpha,\beta_1,\dots,\beta_q)$, let the matrix form of $g\in H^{\mathbb{C}}$ be $\operatorname{diag}\left(\mu,\mu^{-1},(g_{k,l})_{1\leqslant k,l\leqslant q}\right)$
under $\mathcal{B}'$, $\operatorname{Ad}(\exp(t\tau))(g)$ is $$\operatorname{diag}\left(\mu,\mu^{-1},(\mathrm{e}^{t(\beta_k-\beta_l)}g_{k,l})_{1\leqslant k,l\leqslant q}\right),$$
thus $g\in P_{\tau}$ if and only if $g_{k,l}=0$ for any $\beta_k>\beta_l$.

If we assume that
$$\beta_1=\cdots\beta_{k_1}>\beta_{k_1+1}=\cdots=\beta_{k_2}>\beta_{k_2+1}=\cdots>0\geqslant\beta_{k_t+1},$$
then we can get for any $g=\operatorname{diag}\left(\mu,\mu^{-1},(g_{k,l})_{1\leqslant k,l\leqslant q}\right)\in P_{\tau}$, $(g_{k,l})_{1\leqslant k,l\leqslant q}$ is of the form
$$\begin{pNiceMatrix}%
[margin,
name=mymatrix,
first-row,
first-col,
%code-for-first-row = \mathbf{\alph{jCol}} ,
%code-for-first-col = \mathbf{\arabic{iRow}} ,
nullify-dots,
xdots/line-style=loosely dotted,
code-after = {}
]
&        1         & \Cdots    & k_{1}    & k_{1}+1      &\Cdots     & k_{2}   &k_{2}+1      &\Cdots    &\Cdots  & k_{t}  &k_{t}+1   &\Cdots   & q-k_{1}+1 &\Cdots  &q \\
1        & *       & \Cdots    & *        & 0            &\Cdots     & 0       &0            &\Cdots    &\Cdots  &0       &0         &\Cdots   & 0         &\Cdots  &0 \\
\Vdots   &\Vdots   & \Ddots    & \Vdots   & \Vdots       &           &\Vdots   &\Vdots       &          &        &\Vdots  &\Vdots    &         &\Vdots     &        &\Vdots  \\
k_{1}    & *       & \Cdots    & *        & 0            &\Cdots     & 0       &0            &\Cdots    &\Cdots  &0       & 0        &\Cdots   &0          &\Cdots  &0 \\
k_{1}+1  & *       & \Cdots    & *        & *            &\Cdots     & *       &0            & \Cdots   &\Cdots  & 0      &0         &\Cdots   &0          &\Cdots  &0 \\
\Vdots   & \Vdots  &           & \Vdots   & \Vdots       &           & \Vdots  &\Vdots       &          &        &\Vdots  &\Vdots    &         &\Vdots     &        &\Vdots \\
k_{2}    & *       & \Cdots    & *        & *            & \Cdots    & *       & 0           &\Cdots    &\Cdots  & 0      &0         &\Cdots   &0          &\Cdots  &0 \\
\Vdots   & \Vdots  &           & \Vdots   & \Vdots       &           & \Vdots  &\Vdots       &          &        &\Vdots  &\Vdots    &         &\Vdots     &        &\Vdots \\
&        &         &           &          &              &           &         &             &         &         &        &          &         &           &        &   \\
k_{t-1}+1   & *    & \Cdots    & *        & *            & \Cdots    & *       & *           &\Cdots   &\Cdots   & *      &0         &\Cdots   &0          &\Cdots  &0 \\  
\Vdots   & \Vdots   &          & \Vdots   & \Vdots       &           & \Vdots  &\Vdots       &          &        &\Vdots  &\Vdots    &         &\Vdots     &        &\Vdots \\
k_{t}    & *       & \Cdots    & *        & *            & \Cdots    & *       & *           &\Cdots    &\Cdots  & *      &0         &\Cdots   &0          &\Cdots  &0 \\  
\Vdots   & \Vdots  &           & \Vdots   & \Vdots       &           & \Vdots  &\Vdots       &          &        &\Vdots  &\Vdots    &         &\Vdots     &        &\Vdots \\
&        &         &           &          &              &           &         &             &         &         &        &          &         &           &        &        \\
q-k_{1}+1   & *    & \Cdots    & *        & *            & \Cdots    & *       & *           &\Cdots   &\Cdots   & *      &*         &\Cdots   &*          &\Cdots  &*       \\  
\Vdots   & \Vdots  &           & \Vdots   & \Vdots       &           & \Vdots  &\Vdots       &          &        &\Vdots  &\Vdots    &         &\Vdots     &        &\Vdots \\
q        & *       & \Cdots    & *        & *            & \Cdots    & *       & *           &\Cdots    &\Cdots  & *      &*        &\Cdots    &*          &\Cdots  &*

%\CodeAfter \SubMatrix{\{}{1-1}{3-1}{.}[left-xshift=0.5em]
\end{pNiceMatrix}$$
under the basis $\mathcal{B}'$, i.e. it is the stabilizer of a reverse isotropic flag of $\mathbb{C}^q$.

\subsection{Parabolic \texorpdfstring{$\SO_0(2,q)$}{SO0(2,q)}-Higgs bundles}

Recall that $(X,D)$ is a marked Riemann Surface with $\#D=s$. Now we apply the definition of parabolic weight (c.f \cite[Definition 2.1]{biquard2020parabolic}) to $G=\SO_0(2,q)$, where $q\geqslant 2$. Let $n=\lfloor q/2\rfloor$. From \prettyref{sec:so02q}, we know that we can fix a Weyl alcove 
$$\mathcal{A}=\{T(2\pi\alpha,2\pi\beta_i)\in\mathfrak{t}\mid0<\beta_i\pm\beta_j<1,\forall1\leqslant i<j\leqslant n\}$$
of $H=\SO(2)\times\SO(q)$ and choose parabolic weights $$\tau^j\in\dfrac{\iu}{2\pi}\overline{\mathcal{A}}=\{T(\iu\alpha^j,\iu\beta_i^j)\in\mathfrak{t}\mid0\leqslant\beta_k\pm\beta_l\leqslant1,\forall1\leqslant k<l\leqslant n\}$$ for the corresponding marked points $x_j$. This induces us to define the concept of $\SO_0(2,q)$-weight.

\begin{definition}\label{defn:so02qweight}
    An \textbf{$\SO_0(2,q)$-weight (over $(X,D)$)} is defined as a $(q+2)\times s$-tuple $$({\alpha},{\beta}):=(\alpha^j,-\alpha^j,\beta_i^j)_{1\leqslant i\leqslant q,1\leqslant j\leqslant s},$$
    such that 
    \begin{itemize}
        \item[(1)] $\beta_i^j+\beta_{q+1-i}^j=0$ for any $1\leqslant i\leqslant q$;

        \item[(2)] $\beta_k^j+\beta_l^j\in[-1,1]$ for any $1\leqslant k,l\leqslant q$;

        \item[(3)] $\beta_i^j$ is non-increasing with respect to $i$.
    \end{itemize}
\end{definition}
For an $\SO_0(2,q)$-weight $(\alpha,\beta)$, we use $\mathcal{M}(\alpha,\beta)$ to denote the moduli space of equivalence classes of polystable parabolic $\SO_0(2,q)$-Higgs bundles over $(X,D)$ with parabolic weights $\tau^j=T(\iu\alpha^j,\iu\beta^j)$ introduced in \cite[Section 7.1]{biquard2020parabolic}.

\begin{remark}
    $\mathcal{M}(\alpha,\beta)$ coincides with the S-equivalence classes of semistable parabolic $\SO_0(2,q)$-Higgs bundles over $(X,D)$ with parabolic weights $(\alpha,\beta)$. We recommend the readers to see \cite{schmitt2008geometric} for a reference and a summary from non-parabolic, complex reductive group case to parabolic, real reductive group case. We also refer to \cite{kydonakis2023logahoric} for parahoric, complex reductive case.
\end{remark} 

In this section we will prove the following proposition.

\begin{proposition}\label{prop:vbofSO02q}
    A parabolic $\SO_0(2,q)$-Higgs bundle $(\mathbb{E},\Phi)$ of weight $\tau^j=T(\iu\alpha^j,\iu\beta^j)$, where $(\alpha,\beta)$ is an $\SO_0(2,q)$-weight, over $(X,D)$ is equivalent to the following data:
    \begin{itemize}
        \item[(1)] a holomorphic vector bundle $\EE=\mathcal{L}^{\vee}\oplus\mathcal{L}\oplus\mathcal{V}$ over $X$, where $\mathcal{L}$ is a holomorphic line bundle, $\operatorname{rank}(\mathcal{V})=q$ and $\det(\mathcal{V})\cong\mathcal{O}$ which is the trivial line bundle;
    
        \item[(2)] a holomorphic symmetric non-degenerate bilinear form $Q_\VV\colon\operatorname{Sym}^2(\VV)\to\mathcal{O}$ with the induced isomorphism $q_\VV\colon\VV\to\VV^\vee$; 
    
        \item[(3)] a reverse isotropic flag $\VV^j=\left(\VV_t^j\right)$ of $\VV_{x_j}$ with respect to $Q_\VV$ monotonically weighted by $\beta^j:=\left(\beta_l^j\right)_{1\leqslant l\leqslant q}$ for every marked point $x_j\in D$; 
        
        \item[(4)]
        a Higgs field $\Phi=\begin{pmatrix}
            0&0&\eta\\
            0&0&\gamma\\
            -\gamma^*&-\eta^*&0
        \end{pmatrix}$ with respect to the decomposition $\EE=\LL^\vee\oplus\LL\oplus\VV$ which is a global holomorphic section of $\operatorname{End}(\mathcal{E})\otimes\KK(D)$, where $\eta$ and $\gamma$ are sections of $\operatorname{Hom}(\VV,\LL^\vee)\otimes\KK(D)$ and $\operatorname{Hom}(\VV,\LL)\otimes\KK(D)$ respectively, \[\eta^*=q_{\VV}^{-1}\circ\eta^{\vee},\quad \gamma^*=q_{\VV}^{-1}\circ\gamma^{\vee},\]
        such that
        \[\eta=\sum_{l=1}^qO\left(z^{\left\lceil\alpha^j-\beta_l^j\right\rceil}\right)\cdot e^\vee\otimes f_l^\vee\otimes\dfrac{\dd z}{z},\quad \gamma=\sum_{l=1}^qO\left(z^{\left\lceil-\alpha^j-\beta_l^j\right\rceil}\right)\cdot e\otimes f_l^\vee\otimes\dfrac{\dd z}{z}\]
        on a coordinate chart $(U,z)$ centered at $x_j$ for every $x_j\in D$, where $e$ (resp. $e^\vee$) is a holomorphic frame of $\LL$ (resp. $\LL^{\vee}$), $\{f_1,\dots,f_q\}$ is a holomorphic frame compatible with the reverse isotropic flag $\left(\VV_i^j\right)$.
    \end{itemize}
    
\end{proposition}

We also use $(\mathcal{E},\Phi)$ or $(\LL^{\vee}\oplus\LL\oplus\VV,\Phi)$ to denote a parabolic $\SO_0(2,q)$-Higgs bundle instead of $(\mathbb{E},\Phi)$ sometimes. Besides, we usually use $Q_\UU:=\begin{pmatrix}
            0&1\\1&0
        \end{pmatrix}$ to denote the natural pairing on $\UU:=\LL^\vee\oplus\LL$.

\begin{proof}

We fix a principal $H^\mathbb{C}$-bundle $\mathbb{E}$, where $H^{\mathbb{C}}=\SO(2,\mathbb{C})\times\SO(q,\mathbb{C})$, we consider the associated vector bundle $\mathcal{E}:=\mathbb{E}\times_{H^\mathbb{C}}(\mathbb{C}^2\oplus\mathbb{C}^q)=\mathcal{U}\oplus\mathcal{V}$ via the standard representation. It is well-known that $\EE$ has the desired decomposition $\LL^\vee\oplus\LL\oplus\VV$ where $\VV$ admits an orthogonal structure $Q_\VV$, c.f. \cite[Definition 2.5]{aparicio2019so} and \cite[Proposition 2.14]{collier2019geometry}. Therefore, from the discussion on the parabolic subgroup $P_{\tau^j}$ above and the definition of a parabolic principal $\SO(2,\mathbb{C})\times\SO(q,\mathbb{C})$-bundle, c.f. \cite[Section 2.1]{biquard2020parabolic}, we obtain (1)(2)(3). 

To determine what a parabolic $\mathrm{SO}_0(2,q)$-Higgs field is, we have to examine what the sheaf $P\mathbb{E}(\mathfrak{m}^\mathbb{C})$ of parabolic sections of $\mathbb{E}(\mathfrak{m}^\mathbb{C})$ is, where $\mathfrak{m}^{\mathbb{C}}$ in complexified Cartan decomposition is characterized in \prettyref{sec:so02q}. We still use the basis $\mathcal{B}'$ to diagonalize $\tau^j$, and under this basis $\mathfrak{m}^{\mathbb{C}}$ has a basis 
$$\{e_{k,l}=E_{k,l+2}-E_{q+3-l,3-k}\mid k\in\{1,2\},1\leqslant l\leqslant q\},$$ where $(E_{k,l})_{i,j}=\delta_{i,k}\cdot\delta_{j,l}$ for the Kronecker symbol $\delta$. Indeed, $e_{k,l}$ is the eigenvector of $\operatorname{ad}(\tau^j)$ acting on $\mathfrak{m}^{\mathbb{C}}$ with eigenvalue 
$$\mu_{k,l}=\begin{cases}
    \alpha^j-\beta_{l}^j& \mbox{when }k=1,\\
    -\alpha^j-\beta_{l}^j& \mbox{when }k=2.
\end{cases}$$

Therefore, suppose a meromorphic (around $x_j$) section $\phi$ of $\mathbb{E}(\mathfrak{m}^\mathbb{C})$ can be represented as 

$$\begin{pmatrix}
    0&0&\eta\\
    0&0&\gamma\\
    -\gamma^*&-\eta^*&0
\end{pmatrix}$$
under the decomposition $\LL^{\vee}\oplus\LL\oplus\VV$ for meromorphic (around $x_j$) sections $\eta,\gamma$ of $\operatorname{Hom}(\VV,\LL^{\vee})$ and $\operatorname{Hom}(\VV,\LL)$ respectively, where
$$\eta^*=q_{\VV}^{-1}\circ\eta^{\vee},\quad \gamma^*=q_{\VV}^{-1}\circ\gamma^{\vee},$$
then $\phi$ is a section of $P\mathbb{E}(\mathfrak{m}^\mathbb{C})$ if and only if with respect to a coordinate chart $(U,z)$ centered at $x_j$, a holomorphic frame $e$ (resp. $e^\vee$) of $\LL$ (resp. $\LL^{\vee}$), a holomorphic frame $\{f_1,\dots,f_q\}$ compatible with the reverse isotropic flag $\left(\VV_i^j\right)$, \[\eta=\sum_{l=1}^qO\left(z^{\left\lceil\alpha^j-\beta_l^j\right\rceil}\right)\cdot e^\vee\otimes f_l,\quad \gamma=\sum_{l=1}^qO\left(z^{\left\lceil-\alpha^j-\beta_l^j\right\rceil}\right)\cdot e\otimes f_l.\]
Now (4) follows by the definition of $\Phi$ and above discussions.
\end{proof}

There is a continuous map
$\mathbf{d}\colon\mathcal{M}(\alpha,\beta)\longrightarrow\mathbb{Z}$ which sends $\left[(\LL^{\vee}\oplus\LL\oplus\VV,\Phi)\right]$ to $\deg(\LL)$. Hence $\mathcal{M}(\alpha,\beta)$ can be decomposed into $\coprod_{d\in\mathbb{Z}}\mathcal{M}(\alpha,\beta,d)$, where $\mathcal{M}(\alpha,\beta,d):=\mathbf{d}^{-1}(d)$.

\subsection{Stability of parabolic \texorpdfstring{$\mathrm{SO}_0(2,q)$}{SO0(2,q)}-Higgs bundles}

In this subsection, we will give the stability condition of parabolic $\mathrm{SO}_0(2,q)$-Higgs bundles via the vector bundle viewpoint. See \cite[Section 4.2]{biquard2020parabolic} for the definition of stability condition of parabolic $G$-Higgs bundles. 

\begin{restatable}{proposition}{stability}\label{prop:stability}
    A parabolic $\SO_0(2,q)$-Higgs bundle $(\EE=\LL^{\vee}\oplus\LL\oplus\VV,\Phi)$ with weight $\tau^j$ at $x_j$ is semistable if and only if $\operatorname{pardeg}(\UU')+\operatorname{pardeg}(\VV')\leqslant 0$ for any isotropic subbundles $\UU'\subset\LL^{\vee}\oplus\LL$, $\VV'\subset\VV$ satisfying $\UU'\oplus\VV'$ is $\Phi$-invariant. Moreover, $(\EE,\Phi)$ is stable if and only if the above inequality is strict when $\VV'$ is nonzero.
    
\end{restatable}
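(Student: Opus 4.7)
The plan is to unpack the $c=0$ case of the abstract stability definition for parabolic $\SO_0(2,q)$-Higgs bundles and translate its three ingredients---the choice of a parabolic subgroup $P\subset H^{\mathbb{C}}$ with a holomorphic reduction $\sigma$, an antidominant character $\chi$, and the Higgs-invariance condition $\Phi\in\mathbb{E}(\mathfrak{m}^{\mathbb{C}})_{\sigma,\chi}^{-}\otimes\KK(D)$---into concrete data on the vector bundle $\EE=\LL^{\vee}\oplus\LL\oplus\VV$. The approach follows the template of the $\mathrm{GL}(n,\mathbb{C})$-case reviewed in \prettyref{sec:GLn} and of classical-group arguments as in \cite[Lemma A.23]{kydonakis2021topological}, but it must be carried out carefully given the orthogonal structure and the asymmetric roles of the two factors of $H^{\mathbb{C}}=\SO(2,\mathbb{C})\times\SO(q,\mathbb{C})$.

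First I would classify the parabolic subgroups and reductions. Since $\SO(2,\mathbb{C})$ is a torus and admits no proper parabolic subgroup, every parabolic of $H^{\mathbb{C}}$ has the form $\SO(2,\mathbb{C})\times P_2$, where $P_2\subset\SO(q,\mathbb{C})$ is the stabilizer of a partial isotropic flag in $\mathbb{C}^q$. A holomorphic reduction $\sigma$ then corresponds to a holomorphic isotropic flag $0\subset\VV'_k\subset\cdots\subset\VV'_1\subset\VV$. An antidominant character $\chi$ decomposes as $\chi=\chi_1+\chi_2$, where $\chi_2$ assigns nonincreasing real weights to the graded pieces of the flag and $\chi_1$ is a character of the torus $\SO(2,\mathbb{C})$. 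A nonzero $\chi_1$ effectively selects an isotropic line subbundle $\UU'\in\{\LL,\LL^{\vee}\}$ according to its sign, while $\chi_1=0$ corresponds to $\UU'=0$. This explains how the isotropic subbundle $\UU'\subset\LL^{\vee}\oplus\LL$ appears in the statement even though $\SO(2,\mathbb{C})$ itself has no nontrivial reduction.

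Next, using the block form of $\mathfrak{m}^{\mathbb{C}}$ from \prettyref{sec:so02q} and the explicit shape of $\Phi$ in \prettyref{prop:vbofSO02q}, I would verify that $\Phi|_{X\setminus D}\in\mathrm{H}^0(X\setminus D,\mathbb{E}(\mathfrak{m}^{\mathbb{C}})_{\sigma,\chi}^{-}\otimes\KK(D))$ is equivalent to the $\Phi$-invariance $\Phi(\UU'\oplus\VV'_i)\subset(\UU'\oplus\VV'_i)\otimes\KK(D)$ at every level $i$. Then, extending $(\mathbb{E},\Phi)$ to a parabolic $\mathrm{GL}(2+q,\mathbb{C})$-Higgs bundle via the remark following \prettyref{prop:vbofSO02q} and applying Equation~\eqref{pardeg}, one obtains
$$\operatorname{pardeg}\mathbb{E}(\sigma,\chi)=\sum_{i}(\lambda_i-\lambda_{i+1})\bigl[\operatorname{pardeg}(\UU')+\operatorname{pardeg}(\VV'_i)\bigr],$$
with nonnegative successive differences $\lambda_i-\lambda_{i+1}\geqslant 0$ coming from antidominance, the $\SO(2,\mathbb{C})$-weight of $\chi_1$ being packaged into the $\operatorname{pardeg}(\UU')$-term. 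Linearity in the flag parameters then reduces the semistability test to one-step reductions, yielding the stated pairwise criterion $\operatorname{pardeg}(\UU')+\operatorname{pardeg}(\VV')\leqslant 0$.

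Finally, the stable-versus-semistable refinement comes from the excluded set $\iu(\mathfrak{h}\cap\mathfrak{z}(\mathfrak{g}))$ in the abstract definition. I expect the main obstacle to be proving that, although $\mathfrak{z}(\so(2,q))=0$ so that the abstract definition nominally requires strict inequality for \emph{every} nonzero $s_\chi$, the cases with $\VV'=0$ either force equality automatically (when Higgs-invariance permits the dual choice $\UU'\leftrightarrow(\UU')^{\perp}$, making the $\LL$- and $\LL^{\vee}$-contributions cancel) or are already subsumed by the $\VV'\neq 0$ tests. This is what matches the precise statement that the strict inequality is required only when $\VV'\neq 0$. Reconciling the intrinsic $\mathfrak{z}(\mathfrak{g})$-language of the abstract definition with the extrinsic ``$\VV'\neq 0$''-language is the most delicate point of the proof and will require careful case analysis of which isotropic $\UU'\subset\LL^{\vee}\oplus\LL$ can be $\Phi$-invariant in a polystable configuration.
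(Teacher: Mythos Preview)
Your overall strategy---translate the abstract $(P,\sigma,\chi)$-data into isotropic subbundles of $\LL^\vee\oplus\LL$ and $\VV$ and then invoke Equation~\eqref{pardeg}---is the same as the paper's. However, two concrete gaps would prevent the argument from going through as written.

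First, your displayed formula
\[
\operatorname{pardeg}\mathbb{E}(\sigma,\chi)=\sum_{i}(\lambda_i-\lambda_{i+1})\bigl[\operatorname{pardeg}(\UU')+\operatorname{pardeg}(\VV'_i)\bigr]
\]
is not what Equation~\eqref{pardeg} actually produces. The eigenvalues of $s_\chi$ on $\mathbb{C}^{2+q}$ are $\{-\lambda,\lambda\}\cup\{\mu_1,\dots,\mu_{t'}\}$, and when you order them and build the filtration $(\mathcal{W}_k)$ of $\EE$, two things happen that your formula suppresses: (a) the $\UU$-part of $\mathcal{W}_k$ is not a fixed $\UU'$ but varies with $k$ (it can be $0$, $\LL$, $\LL^\vee$, or $\LL\oplus\LL^\vee$ depending on where $\pm\lambda$ sits among the $\mu_i$); and (b) the $\VV$-part of $\mathcal{W}_k$ runs through the \emph{full} isotropic filtration $0\subset\VV'_k\subset\cdots\subset\VV'_1\subset(\VV'_1)^\perp\subset\cdots\subset(\VV'_k)^\perp\subset\VV$, so roughly half of the terms involve coisotropic rather than isotropic subbundles of $\VV$. (Also, in the paper's convention $\lambda_1<\cdots<\lambda_{t''}$, so the differences $\lambda_i-\lambda_{i+1}$ are negative, not nonnegative.)

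Second, and this is the key missing idea: to reduce the coisotropic terms back to the isotropic ones you need the identity $\operatorname{pardeg}(\VV')=\operatorname{pardeg}((\VV')^\perp)$ for isotropic $\VV'\subset\VV$. The paper isolates this as \prettyref{lemma:pardegoforthocomplement} and proves it by a direct computation with the parabolic structure; it is what lets one conclude $\operatorname{pardeg}(\mathcal{W}_k)\leqslant 0$ for \emph{all} $k$, not just the isotropic half, and hence $\operatorname{pardeg}\mathbb{E}(\sigma,\chi)\geqslant 0$. Without this lemma your ``linearity in the flag parameters'' reduction does not reach only isotropic $\VV'$.

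For the stable refinement, the paper's route is more direct than the case analysis you sketch: since $\mathfrak{z}(\so(2,q))=0$, the excluded set is just $\{0\}$, so one simply needs $\operatorname{pardeg}\mathbb{E}(\sigma,\chi)>0$ for every nonzero $s_\chi$; the paper observes that $s_\chi\neq 0$ forces $t''>1$, so the filtration has a genuine proper step and the strict inequalities on the isotropic pieces with $\VV'\neq 0$ (together with \prettyref{lemma:pardegoforthocomplement}) give strictness. Your speculation about automatic cancellations between $\LL$ and $\LL^\vee$ is not the mechanism used.
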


We first do some preparation. Let $P\subset\SO(q,\mathbb{C})$ be a parabolic subgroup, $\chi$ an antidominant character of $P':=\SO(2,\mathbb{C})\times P$
and
$\sigma\in\mathrm{H}^0(X,\mathbb{E}(\SO(q,\mathbb{C})/P))$ a holomorphic reduction. Suppose that 
$\rho_V\colon\SO(2)\times\SO(q)\to\mathrm{U}(2+q)$ is the standard representation and it can be naturally extended to the standard representation 
$\rho_V\colon\SO(2,\mathbb{C})\times\SO(q,\mathbb{C})\to\mathrm{GL}(2+q,\mathbb{C})$ acting on the complex linear space $V=\mathbb{C}^{2+q}$.
Let $s_\chi$ be the dual element of $\chi$ with respect to the invariant bilinear form. Then $\mathrm{d}\rho_V(s_{\chi})$ can be diagonalized with real eigenvalues $$-\lambda,\,\lambda,\,\mu_1<\mu_2<\cdots<\mu_{t'},\,\mu_k=-\mu_{t'+1-k}.$$
We rearrange them as 
$
\lambda_1<\lambda_2<\cdots<\lambda_{t''}$ and assume that  $\lambda_{t''+1}=0$. Define $V_j=\operatorname{ker}(\lambda_j\operatorname{id}_V-\mathrm{d}\rho_V(s_{\chi}))$ and $W_j:=\bigoplus_{k=1}^j V_k$. We obtain holomorphic vector bundles $\mathcal{W}_j=\mathbb{E}(W_j)$ and this gives the filtration
\begin{equation}\label{filtration}
    0=\mathcal{W}_0\subset\mathcal{W}_1\subset\cdots\mathcal{W}_{t''}=\EE,
\end{equation}
Similar to the $\mathrm{GL}(n,\mathbb{C})$ case (see (\cite[Appendix A]{kydonakis2021topological})), we obtain that 
\begin{equation}\label{pardeg}
    \operatorname{pardeg}\mathbb{E}(\sigma,\chi)=\sum_{k=1}^{t''}(\lambda_{k}-\lambda_{k+1})\operatorname{pardeg}(\mathcal{W}_{k}),
\end{equation}
where $\operatorname{pardeg}\mathbb{E}(\sigma,\chi)$ denotes the parabolic degree of the reduction $\sigma$ with respect to the antidominant character $\chi$. One can see \cite[Section 2.2]{biquard2020parabolic} or \cite[Definition A.15]{kydonakis2021topological} for its definition. Denote
$\mathfrak{m}_{\tau}:=\left\{v\in\mathfrak{m}^{\mathbb{C}}\mid \operatorname{Ad}(\exp(t\tau))v\mbox{ is bounded when }t\to\infty\right\}$
for any parabolic subgroup $P_\tau$ and $\mathbb{E}_\sigma\left(\mathfrak{m}_\tau\right):=\left(\sigma^*\mathbb{E}\right)\times_{P_{\tau}}\mathfrak{m}_{\tau}.$

To prove \prettyref{prop:stability} of parabolic $\SO_0(2,q)$-Higgs bundle via subbundles, we need to prove the following lemma.

\begin{lemma}\label{lemma:pardegoforthocomplement}
Suppose $\VV'$ is an isotropic subbundle of $\VV$ equipped with the induced parabolic structure from $\VV$, then $\deg(\VV')=\deg\left((\VV')^{\perp}\right)$ and $\operatorname{pardeg}(\VV')=\operatorname{pardeg}\left((\VV')^{\perp}\right).$
\end{lemma}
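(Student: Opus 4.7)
The plan is to establish both equalities by exploiting the non-degenerate symmetric pairing $Q_{\VV}$ on $\VV$ together with the two structural symmetries of the data: the reverse isotropic relation $\VV_i^j = (\VV_{t_j+2-i}^j)^{\perp}$, and the weight relation $\tilde{\beta}_i^j = -\tilde{\beta}_{t_j+1-i}^j$.

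For the degree statement, the first step is to use $Q_{\VV}$ to build the natural restriction map $\VV \to (\VV')^{\vee}$ sending $v\mapsto Q_{\VV}(v,\cdot)|_{\VV'}$. Because $Q_{\VV}$ is non-degenerate and $\VV'$ is a locally free subbundle, this is surjective as a map of vector bundles and its kernel is precisely $(\VV')^{\perp}$. Taking degrees of the resulting short exact sequence
\[
0 \longrightarrow (\VV')^{\perp} \longrightarrow \VV \longrightarrow (\VV')^{\vee} \longrightarrow 0
\]
and using $\deg(\VV) = 0$ (from $\det(\VV)\cong \mathcal{O}$) immediately gives $\deg((\VV')^{\perp}) = -\deg((\VV')^{\vee}) = \deg(\VV')$.

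For the parabolic degree I would argue fibrewise at each marked point $x_j$. The key linear-algebra identity is
\[
\dim\bigl((\VV')^{\perp}_{x_j} \cap \VV_i^j\bigr) = \dim \VV_i^j + \dim\bigl(\VV'_{x_j}\cap \VV_{t_j+2-i}^j\bigr) - \dim \VV'_{x_j},
\]
an immediate consequence of $(A+B)^{\perp}=A^{\perp}\cap B^{\perp}$ together with $\VV_i^j = (\VV_{t_j+2-i}^j)^{\perp}$. Substituting this into the parabolic contribution of $\operatorname{pardeg}((\VV')^{\perp})$ and subtracting the corresponding expression for $\operatorname{pardeg}(\VV')$ breaks the difference into an ``ambient'' piece depending only on the flag dimensions of $\VV$, a rank piece proportional to $\dim \VV'$, and a piece genuinely involving $\VV'$ but reindexed by $i \mapsto t_j+2-i$. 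The ambient piece vanishes because the weight symmetry $\tilde{\beta}_i^j=-\tilde{\beta}_{t_j+1-i}^j$ combined with the induced multiplicity symmetry $m_i^j = m_{t_j+1-i}^j$ (which follows from $\dim \VV_i^j + \dim \VV_{t_j+2-i}^j = q$) makes it equal its own negative; the genuine $\VV'$-piece is matched with its negative by the same involution $i \mapsto t_j+1-i$ applied to the multiplicities of the induced flag on $\VV'$.

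The main technical nuisance I anticipate is the careful handling of the boundary indices $i=1$ and $i=t_j+1$, where the conventions $\dim \VV_{t_j+1}^j = 0$ and $\tilde{\beta}_{t_j+1}^j = 0$ must be plugged in correctly, since the symmetry $\tilde{\beta}_i^j = -\tilde{\beta}_{t_j+1-i}^j$ is only asserted for $1\leqslant i\leqslant t_j$. Once this bookkeeping is in place, the whole calculation reduces to two applications of the involution $i\mapsto t_j+1-i$ together with the two symmetries above.
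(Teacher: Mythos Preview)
Your proposal is correct and follows essentially the same route as the paper. Both arguments hinge on the identity $(\VV')^{\perp}_{x_j}\cap\VV_i^j=(\VV'_{x_j}+\VV_{t_j+2-i}^j)^{\perp}$, expand via inclusion--exclusion, reindex using $i\mapsto t_j+1-i$ together with the weight symmetry $\tilde\beta_i^j=-\tilde\beta_{t_j+1-i}^j$, and identify the leftover ``ambient'' piece as the parabolic contribution of $\VV$ itself, which vanishes. The only cosmetic difference is in the degree step: the paper observes that $Q_{\VV}$ descends to a non-degenerate form on $(\VV')^{\perp}/\VV'$ (using isotropy of $\VV'$), whereas you use the short exact sequence $0\to(\VV')^{\perp}\to\VV\to(\VV')^{\vee}\to0$; your version has the mild bonus of not invoking isotropy for that part.
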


\begin{proof}
    Since $\VV'$ is isotropic, the non-degenerate bilinear form $Q_{\VV}$ descends to the quotient bundle $(\VV')^{\perp}/\VV'$. Hence $\deg(\VV')=\deg\big((\VV')^{\perp}\big)$.
    
    Now we focus on the parabolic part. Suppose that the isotropic flag of $\VV$ at $x_j$ is $$0=\VV_{t+1}^j\subset\VV_t^j\subset\cdots\subset\VV_1^j=(\VV)_{x_j}$$
    with weight $\tilde{\beta}_{1}^j>\tilde{\beta}_{2}^j>\cdots>\tilde{\beta}_{t}^j$ and assume $\tilde{\beta}_0^j=\tilde{\beta}_{t+1}^j=0$. 
    Then 
    $$\begin{aligned}
        &\sum_{l=1}^{t}\left(\tilde{\beta}_{t+1-l}^j-\tilde{\beta}_{t-l}^j\right)\dim\left(\VV_{t+1-l}^j\cap(\VV')^{\perp}_{x_j}\right)\\
        =&\sum_{l=1}^{t}\left(\tilde{\beta}_{t+1-l}^j-\tilde{\beta}_{t-l}^j\right)\left(\dim(\VV)_{x_j}-\dim\left(\VV_{l+1}^j\oplus(\VV')_{x_j}\right)\right)\quad\hspace{-1em}\mbox{(since $U^\perp\cap V^\perp=(U\oplus V)^\perp$)}\\
        =&\tilde{\beta}_t^jq-\sum_{l=1}^{t}\left(\tilde{\beta}_{t+1-l}^j-\tilde{\beta}_{t-l}^j\right)\left(\dim\left(\VV_{l+1}^j\right)+\dim\left((\VV')_{x_j}\right)-\dim\left(\VV_{l+1}^j\cap(\VV')_{x_j}\right)\right)\\
        =&\tilde{\beta}_t^j\left(q-\dim\left((\VV')_{x_j}\right)\right)-\sum_{l=1}^{t}\left(\tilde{\beta}_{t+1-l}^j-\tilde{\beta}_{t-l}^j\right)\left(\dim\left(\VV_{l+1}^j\right)\right)\\&\quad\quad\quad\quad\quad\quad\quad\quad\quad\quad\quad\quad\quad\quad\quad\quad\quad\quad+\sum_{l=1}^{t}\left(\tilde{\beta}_{t+1-l}^j-\tilde{\beta}_{t-l}^j\right)\dim\left(\VV_{l+1}^j\cap(\VV')_{x_j}\right).
    \end{aligned}$$
    Now we set 
    \[I:=\tilde{\beta}_t^j\left(q-\dim\left((\VV')_{x_j}\right)\right)-\sum_{l=1}^{t}\left(\tilde{\beta}_{t+1-l}^j-\tilde{\beta}_{t-l}^j\right)\left(\dim\left(\VV_{l+1}^j\right)\right).\]
    Then the above formula equals to
    $$\begin{aligned}
        &I+\sum_{l=1}^{t}\left(\tilde{\beta}_{t+1-l}^j-\tilde{\beta}_{t-l}^j\right)\dim\left(\VV_{l+1}^j\cap(\VV')_{x_j}\right)\\
        =&I+\sum_{l=1}^{t}\left(\tilde{\beta}_{l}^j-\tilde{\beta}_{l-1}^j\right)\dim\left(\VV_{t+2-l}^j\cap(\VV')_{x_j}\right)\quad(\mbox{change the index from $l$ to $t+1-l$})\\
        =&I+\sum_{l=1}^{t}\left(\tilde{\beta}_{t+2-l}^j-\tilde{\beta}_{t+1-l}^j\right)\dim\left(\VV_{t+2-l}^j\cap(\VV')_{x_j}\right)\quad(\mbox{recall that $\tilde{\beta}_{l}^j+\tilde{\beta}_{t+1-l}^j=0$})\\
        =&I+\sum_{l=1}^{t}\left(\tilde{\beta}_{t+1-l}^j-\tilde{\beta}_{t-l}^j\right)\dim\left(\VV_{t+1-l}^j\cap(\VV')_{x_j}\right)-\tilde{\beta}_1^j\dim\left((\VV')_{x_j}\right),
    \end{aligned}$$
    so it suffices to show that
    $$I-\tilde{\beta}_1^j\dim\left((\VV')_{x_j}\right)=\tilde{\beta}_t^jq-\sum_{l=1}^{t}\left(\tilde{\beta}_{t+1-l}^j-\tilde{\beta}_{t-l}^j\right)\left(\dim\left(\VV_{l+1}^j\right)\right)=0.$$
    Actually, 
    $$\begin{aligned}
        &\tilde{\beta}_t^jq-\sum_{l=1}^{t}\left(\tilde{\beta}_{t+1-l}^j-\tilde{\beta}_{t-l}^j\right)\left(\dim\left(\VV_{l+1}^j\right)\right)\\
        =&\tilde{\beta}_t^jq-\sum_{l=1}^{t}\left(\tilde{\beta}_{t+1-l}^j-\tilde{\beta}_{t-l}^j\right)\left(q-\dim\left(\VV_{t+1-l}^j\right)\right)\\
        =&\sum_{l=1}^{t}\left(\tilde{\beta}_{t+1-l}^j-\tilde{\beta}_{t-l}^j\right)\dim\left(\VV_{t+1-l}^j\right)
    \end{aligned}$$
    is the parabolic part of $\operatorname{pardeg}(\VV)$, indeed $0$.
\end{proof}

\begin{proof}[Proof of \prettyref{prop:stability}]
We first show that if $(\EE,\Phi)$ is semistable, then $\operatorname{pardeg}(\UU')+\operatorname{pardeg}(\VV')\leqslant 0$. Consider the isotropic flag $$0\subset\VV'\subset(\VV')^{\perp}\subset\VV,$$
it means that $\VV$ can be viewed as a parabolic principal $P_\tau$-bundle with a holomorphic reduction $\sigma\in\mathrm{H}^0(X,\mathbb{E}(\SO(q,\mathbb{C})/P_\tau))$ for the parabolic group $P_\tau\subset\SO(q,\mathbb{C})$ which preserves the flag of the form above in $\mathbb{C}^q$. Denote $\operatorname{rank}(\VV')$ by $v$. Note that any strictly antidominant element $s_\chi$ of $P_\tau':=\SO(2,\mathbb{C})\times P_\tau$ can be diagonalized as $$\operatorname{diag}(\lambda,-\lambda,\mu_1,\dots,\mu_q)$$
with $\lambda\in\mathbb{R}$, $\mu_1=\cdots=\mu_{v}=:\mu>0$ when $v>0$, and we set $\mu_1=\cdots=\mu_q$=0 when $v=0$. We have $\mu_{q+1-k}=-\mu_k$ for any $1\leqslant k\leqslant v$ and $\mu_k=0$ otherwise. If $\operatorname{rank}(\UU')=1$, without loss of generality, we assume that $\UU'=\LL$, we take that $\lambda=\mu$. Then $\LL\oplus\VV'$ is $\Phi$-invariant shows that $\Phi|_{X\setminus D}\in\mathrm{H}^0\left(X\setminus D,\mathbb{E}_\sigma\left(\mathfrak{m}_\tau\right)\otimes\KK(D)\right).$

Now the filtration $(\mathcal{W}_k)$ corresponds to $\sigma,\chi$ (defined in (\ref{filtration})) is
\[\begin{tikzcd}
	0 & {\mathcal{L}\oplus\mathcal{V}^\prime} & {\mathcal{L}\oplus(\mathcal{V}^\prime)^\perp} & {\mathcal{E}} \\
	& {\mathcal{W}_1} & {\mathcal{W}_2} & {\mathcal{W}_3}
	\arrow["{=}"{marking}, draw=none, from=1-2, to=2-2]
	\arrow["{=}"{marking}, draw=none, from=1-3, to=2-3]
	\arrow["{=}"{marking}, draw=none, from=1-4, to=2-4]
	\arrow["\subset"{marking}, draw=none, from=1-1, to=1-2]
	\arrow["\subset"{marking}, draw=none, from=1-2, to=1-3]
	\arrow["\subset"{marking}, draw=none, from=1-3, to=1-4]
	\arrow["\subset"{marking, pos=0.4}, draw=none, from=2-2, to=2-3]
	\arrow["\subset"{marking, pos=0.7}, draw=none, from=2-3, to=2-4]
\end{tikzcd}\]
by definition. Hence by (\ref{pardeg}) we obtain that
\begin{equation}\label{stable1}
    \begin{aligned}
    0\leqslant&\operatorname{pardeg}\mathbb{E}(\sigma,\chi)\\
    =&-\lambda\operatorname{pardeg}(\LL\oplus\VV')-\lambda\operatorname{pardeg}(\LL\oplus(\VV')^{\perp})+\lambda\operatorname{pardeg}(\EE)\\
    =&-2\lambda\left(\operatorname{pardeg}(\LL)+\operatorname{pardeg}(\VV')\right),
\end{aligned}
\end{equation}
and this implies that $\operatorname{pardeg}(\LL)+\operatorname{pardeg}(\VV')\leqslant0$. If $\UU'=0$, we take $\lambda=0$ and $\mu>0$, then $\Phi(\VV')=0$ shows that $\Phi|_{X\setminus D}\in\mathrm{H}^0\left(X\setminus D,\mathbb{E}_\sigma\left(\mathfrak{m}_\tau\right)\otimes\KK(D)\right)$
and then similarly as above we obtain that
\begin{equation}\label{stable2}
\begin{aligned}
    0\leqslant&\operatorname{pardeg}\mathbb{E}(\sigma,\chi)\\
    =&-\mu\operatorname{pardeg}(\VV')-\mu\operatorname{pardeg}(\LL\oplus\LL^{\vee}\oplus(\VV')^{\perp})+\mu\operatorname{pardeg}(\EE)\\
    =&-2\mu\operatorname{pardeg}(\VV').
\end{aligned}\end{equation}
This shows that $\operatorname{pardeg}(\VV')\leqslant0$.

Now we assume that $\operatorname{pardeg}(\UU')+\operatorname{pardeg}(\VV')\leqslant 0$ for any isotropic subbundle $\UU'\subset\LL^{\vee}\oplus\LL$, $\VV'\subset\VV$ satisfying $\UU'\oplus\VV'$ is $\Phi$-invariant. For any parabolic subgroup $P_\tau'\subset\SO(2,\mathbb{C})\times\SO(q,\mathbb{C})$, any antidominant character $\chi$, any holomorphic reduction $\sigma$, we know that if $s_{\chi}$ can be diagonalized with eigenvalues $\lambda_1<\lambda_2<\cdots<\lambda_{t''}$, then by (\ref{pardeg}) again, 
\begin{equation}\label{stable3}
    \operatorname{pardeg}\mathbb{E}(\sigma,\chi)=\sum_{k=1}^{t''}(\lambda_{k}-\lambda_{k+1})\operatorname{pardeg}(\mathcal{W}_{k}).
\end{equation}
with assuming $\lambda_{t^\dprime+1}=0$.
Note that
$\Phi|_{X\setminus D}\in\mathrm{H}^0\left(X\setminus D,\mathbb{E}_\sigma\left(\mathfrak{m}_\tau\right)\otimes\KK(D)\right)$
means that every $\mathcal{W}_k$ is $\Phi$-invariant. If $k\leqslant\lfloor t''/2\rfloor$, $\mathcal{W}_k$ is isotropic and it splits as $\UU'\oplus\VV'$ which satisfies the conditions above, so $\operatorname{pardeg}(\mathcal{W}_k)\leqslant0$. If $k>\lfloor t''/2\rfloor$, we can also get $\operatorname{pardeg}(\mathcal{W}_k)\leqslant0$ due to \prettyref{lemma:pardegoforthocomplement}. Hence we get $\operatorname{pardeg}(\mathcal{W}_k)\leqslant0$ for every $k$ and automatically we have $\operatorname{pardeg}(\mathcal{W}_{t^\dprime})=0$. Therefore, $\operatorname{pardeg}\mathbb{E}(\sigma,\chi)\geqslant0$ due to $\lambda_1<\lambda_2<\cdots<\lambda_{t^\dprime}$.

In addition, if $(\mathcal{E},\Phi)$ is stable, then for any $\sigma,\chi$, we have $\operatorname{pardeg}\mathbb{E}(\sigma,\chi)>0$. Then for any proper $\mathcal{V}'$, we have $\mu>0$ and the inequalities in (\ref{stable1}) and (\ref{stable2}) are strict, so $\operatorname{pardeg}(\UU')+\operatorname{pardeg}(\VV')<0$. Conversely, we assume $\operatorname{pardeg}(\UU')+\operatorname{pardeg}(\VV')<0$ for every proper $\VV'$. For any $$s_\chi\in\iu\left(\left(\mathfrak{h}\cap\mathfrak{z}(\mathfrak{g})\right)^{\perp_B}\setminus\left(\mathfrak{h}\cap\mathfrak{z}(\mathfrak{g})\right)\right)=\iu\mathfrak{h}\setminus\{0\}\mbox{ for }\mathfrak{g}=\mathfrak{so}(2,q), \mathfrak{h}=\mathfrak{so}(2)\oplus\mathfrak{so}(q),$$ the filtration $(\mathcal{W}_k)$ we get in (\ref{stable3}) is nontrivial, i.e. $t^\dprime>1$. Hence $\operatorname{pardeg}\mathbb{E}(\sigma,\chi)>0$ follows.
\end{proof}

\subsection{Parabolic \texorpdfstring{$\mathrm{SO}(n,\mathbb{C})$}{SO(n,C)}-Higgs bundles and stability}

Similar to the above discussion, we can apply the general theory to $G=\mathrm{SO}(n,\mathbb{C})$ for $n>2$. We omit the proof in this subsection. Through the vector bundle viewpoint, we can get a parabolic $\mathrm{SO}(n,\mathbb{C})$-Higgs bundle is equivalent to the following data:
\begin{itemize}
    \item[(1)] A rank $n$ vector bundle $\mathcal{E}$ with a non-degenerate symmetric bilinear form $Q_{\mathcal{E}}\colon\mathcal{E}\otimes\mathcal{E}\to\mathcal{O}$ on $\mathcal{E}$, i.e. it induces an isomorphism $q_{\mathcal{E}}\colon\mathcal{E}\to\mathcal{E}^{\vee}$. Moreover, $\det(\mathcal{E})\cong\mathcal{O}$;

    \item[(2)] The parabolic structure corresponds to reverse isotropic flags at each $x_j$;

    \item[(3)] A parabolic Higgs field $\Phi$ satisfying $q_{\mathcal{E}}^{-1}\circ\Phi^{\vee}\circ q_{\mathcal{E}}=-\Phi$.
\end{itemize}

Therefore, one can view a parabolic $\SO_0(2,q)$-Higgs bundle as a parabolic $\SO(2+q,\mathbb{C})$-Higgs bundle naturally.

For stability, we can get
\begin{proposition}\label{prop:stabsonC}
    For $n>2$, a parabolic $\SO(n,\mathbb{C})$-Higgs bundle $(\mathcal{E},\Phi)$ is semistable if and only if for any $\Phi$-invariant isotropic subbundle $\mathcal{E}^\prime\subset\mathcal{E}$, $\operatorname{pardeg}(\mathcal{E}^\prime)\leqslant 0$. And it is stable if and only if the above inequality is strict when $\mathcal{E}^\prime$ is nonzero.
\end{proposition}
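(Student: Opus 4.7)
The plan is to mimic the proof of \prettyref{prop:stability} for $\SO_0(2,q)$, but now the analysis is more uniform because the structure group is a single $\SO(n,\mathbb{C})$ rather than a product. First I would translate the data of a holomorphic reduction to a parabolic subgroup $P \subset \SO(n,\mathbb{C})$ together with an antidominant character $\chi$ into a reverse isotropic flag of $\mathcal{E}$ together with real weights: diagonalizing $s_\chi$ under the standard representation one gets real eigenvalues $\lambda_1<\cdots<\lambda_{t}$ with $\lambda_k=-\lambda_{t+1-k}$, and the associated filtration $0 = \mathcal{W}_0 \subset \mathcal{W}_1\subset\cdots\subset\mathcal{W}_t = \mathcal{E}$ satisfies $\mathcal{W}_{t-k} = \mathcal{W}_k^{\perp_{Q_{\mathcal{E}}}}$. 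Exactly as in \prettyref{sec:GLn}, the condition $\Phi|_{X\setminus D} \in \mathrm{H}^0(X\setminus D, \mathbb{E}(\mathfrak{m}^{\mathbb{C}})_{\sigma,\chi}^- \otimes \mathcal{K}(D))$ becomes $\Phi$-invariance of every $\mathcal{W}_k$, and the parabolic degree is given by the standard formula
\begin{equation*}
\operatorname{pardeg}\mathbb{E}(\sigma,\chi) = \sum_{k=1}^{t}(\lambda_k - \lambda_{k+1})\operatorname{pardeg}(\mathcal{W}_k),
\end{equation*}
with $\lambda_{t+1}=0$.

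For the forward direction (semistable implies the inequality on isotropic subbundles), given a $\Phi$-invariant isotropic subbundle $\mathcal{E}' \subset \mathcal{E}$, I would consider the reverse isotropic flag $0 \subset \mathcal{E}' \subset (\mathcal{E}')^\perp \subset \mathcal{E}$, which corresponds to a holomorphic reduction $\sigma$ to the parabolic subgroup stabilising this flag. Taking $s_\chi$ with eigenvalues $-\mu,0,\mu$ (with $\mu>0$) on $\mathcal{E}'$, $(\mathcal{E}')^\perp/\mathcal{E}'$, $\mathcal{E}/(\mathcal{E}')^\perp$ respectively, the above formula together with \prettyref{lemma:pardegoforthocomplement} (which gives $\operatorname{pardeg}((\mathcal{E}')^\perp) = \operatorname{pardeg}(\mathcal{E}')$) yields
\begin{equation*}
0 \leqslant \operatorname{pardeg}\mathbb{E}(\sigma,\chi) = -2\mu\operatorname{pardeg}(\mathcal{E}'),
\end{equation*}
so $\operatorname{pardeg}(\mathcal{E}') \leqslant 0$. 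When $\mathcal{E}'$ is proper and the Higgs bundle is stable, the same computation gives strict inequality.

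For the converse direction, assume $\operatorname{pardeg}(\mathcal{E}') \leqslant 0$ for every $\Phi$-invariant isotropic subbundle. Given any $(\sigma,\chi)$ producing a $\Phi$-invariant filtration $(\mathcal{W}_k)$, for $k \leqslant \lfloor t/2 \rfloor$ the subbundle $\mathcal{W}_k$ is isotropic and $\Phi$-invariant, hence $\operatorname{pardeg}(\mathcal{W}_k) \leqslant 0$; for $k > \lfloor t/2 \rfloor$, $\mathcal{W}_k = \mathcal{W}_{t-k}^\perp$ for an isotropic $\Phi$-invariant $\mathcal{W}_{t-k}$, and \prettyref{lemma:pardegoforthocomplement} upgrades $\operatorname{pardeg}(\mathcal{W}_{t-k}) \leqslant 0$ to $\operatorname{pardeg}(\mathcal{W}_k) \leqslant 0$. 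Since $\lambda_k - \lambda_{k+1} < 0$ for all $k < t$ and $\mathcal{W}_t=\mathcal{E}$ has $\operatorname{pardeg}=0$, the displayed formula gives $\operatorname{pardeg}\mathbb{E}(\sigma,\chi) \geqslant 0$. For stability, one has to check that whenever $s_\chi \in \iu\mathfrak{h}\setminus\{0\}$ the filtration is nontrivial so at least one $\mathcal{W}_k$ with $k\leqslant\lfloor t/2\rfloor$ is a proper nonzero isotropic subbundle, yielding the strict inequality.

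The one step that requires a little care, and the main potential obstacle, is the orthogonal bookkeeping in the middle of the filtration: when $t$ is even one has to verify that $\mathcal{W}_{t/2}$ is isotropic (it coincides with its own orthogonal complement, so being isotropic and coisotropic at the same time), and when $t$ is odd, that $\mathcal{W}_{(t-1)/2}$ is isotropic and $\mathcal{W}_{(t+1)/2}$ is coisotropic so \prettyref{lemma:pardegoforthocomplement} applies to each relevant index. Once this matching is organised, the forward and backward arguments above combine exactly as in the proof of \prettyref{prop:stability}, and the strict-inequality version gives the stability statement.
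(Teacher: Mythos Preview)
Your proposal is correct and follows precisely the approach the paper intends: the paper explicitly omits the proof of this proposition, stating only that it is ``similar as the above discussion'' (i.e., the proof of \prettyref{prop:stability}), and your argument is exactly that adaptation. One small point you leave implicit is that for the forward direction the flag $0\subset\mathcal{E}'\subset(\mathcal{E}')^\perp\subset\mathcal{E}$ must be $\Phi$-invariant in order to test semistability; this holds because $\Phi$ is $Q_{\mathcal{E}}$-skew (item (3) in the $\SO(n,\mathbb{C})$ data), so $\mathcal{E}'$ $\Phi$-invariant forces $(\mathcal{E}')^\perp$ to be $\Phi$-invariant as well.
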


Since the bilinear form is non-degenerate, $\EE$ itself cannot be isotropic. So here the $\Phi$-invariant isotropic subbundle could not be $\EE$ itself.

\begin{remark}\label{rem:diferrentstability}
    We need to note that there exist some stable parabolic $\SO_0(2,q)$-Higgs bundles which are not stable as parabolic $\SO(2+q,\mathbb{C})$-Higgs bundles from \prettyref{prop:stability} and \prettyref{prop:stabsonC}. The main reason of this difference is $\mathfrak{so}(2)\oplus\mathfrak{so}(q)$ has $\mathfrak{so}(2)$ as a nontrivial center,  hence it does not give any reduction on $\LL^\vee\oplus\LL$. In \cite[Definition 2.13]{tholozan2021compact}, we can see that the stability they defined for their parabolic $\mathrm{SU}(p,q)$-Higgs bundles coincides with our usual $\mathrm{SL}(p+q,\mathbb{C})$-stability.
\end{remark}

\subsection{Some auxiliary results}\label{section:NAH}

We first introduce the Hitchin fibration. Its properness will be the most important tool to deduce the compactness of some connected components.

\begin{definition}
    The \textbf{Hitchin fibration} is defined as
    $$\begin{aligned}
        \Pi_{Hit}\colon \mathcal{M}(\alpha,\beta)&\longrightarrow\bigoplus_{i=1}^{\lfloor q/2\rfloor+1}\mathrm{H}^0(X,\mathcal{K}(D)^{2i})\\ \big[(\mathcal{E},\Phi)\big]&\longmapsto\big(\operatorname{tr}(\Phi^{2i})\big)_{i=1}^{\lfloor q/2\rfloor+1}.
    \end{aligned}$$
\end{definition}

It is well-known that:

\begin{proposition}\label{prop:hitchinproper}
    $\Pi_{Hit}$ is proper, i.e. the preimage of a compact subset is compact.
\end{proposition}

\begin{remark}
    The properness of the Hitchin fibration was proven by Hitchin for $\mathrm{GL}(n,\mathbb{C})$-Higgs bundles over closed Riemann surfaces in \cite{Hitchin1987StableBA} and extended by Yokogawa to general parabolic $\mathrm{GL}(n,\mathbb{C})$-Higgs sheaves in \cite[Section 5]{Yokogawa1993CompactificationOM}. 
    
    For general parabolic $G$-Higgs bundles, when $G$ is an algebraic group over any algebraically closed field, the properness is proven by Yun in \cite[Corollary 2.5.2]{yun2011global}. Moreover, when $G$ is a real reductive Lie group, the properness follows by identifying the moduli space of parabolic $G$-Higgs bundles with a closed subset consisting of the fixed points of Cartan involution in the moduli space of $G^{\mathbb{C}}$-Higgs bundles, up to a finite-sheeted covering (see \cite[Proposition 5.9]{garcia2019involutions}).
\end{remark}

To connect the representation side and Higgs bundle side, we introduce the non-abelian Hodge correspondence here. We fix a closed Riemann surface $X$ of genus $g$ with $s$ marked points $x_1,\dots,x_s$ on it, and we regard $X\setminus D$ as $\Sigma_{g,s}$. Let $(\alpha,\beta)$ be an $\SO_0(2,q)$-weight and $\tau=(\tau^j)_{j=1}^s$ be the corresponding weight in $\dfrac{\iu}{2\pi}\mathcal{A}$. Define 
$h(\alpha,\beta):=\left(\exp(2\pi\iu\cdot\tau^j)\right)_{j=1}^s.$ Recall that a parabolic $\SO_0(2,q)$-Higgs bundle $(\mathbb{E},\Phi)$ is called \textbf{simple} if the automorphism group $\operatorname{Aut}(\mathbb{E},\Phi)$ is $Z(\SO(2+q,\mathbb{C}))\cap\operatorname{ker}\iota$, where $\iota$ denotes the isotropy representation.

The non-abelian Hodge correspondence (for structure group $\SO_0(2,q)$) says that
\begin{proposition}\label{prop:NAH}
    For any $\SO_0(2,q)$-weight $(\alpha,\beta)$ such that $\alpha^j-\beta_i^j\notin\mathbb{Z}$ for any $i,j$, there exists a homeomorphism $$\mathsf{NAH}\colon\mathcal{M}(\alpha,\beta)\longrightarrow\mathfrak{X}_{h(\alpha,\beta)}\big(\Sigma_{g,s},\mathrm{SO}_0(2,q)\big).$$ Through this correspondence, stable, simple Higgs bundles, which are also stable as parabolic $\SO(2+q,\mathbb{C})$-Higgs bundle, are mapped into irreducible representations.
\end{proposition}

\begin{remark}
    The non-abelian Hodge correspondence in the parabolic setting was first established by Simpson for $\mathrm{GL}(n,\mathbb{C})$ by C. T. Simpson in \cite{Simpson1990HarmonicBO} and later extended to arbitrary real reductive Lie groups by O. Biquard, O. García-Prada, and I. Mundet i Riera in \cite{biquard2020parabolic}. In our case, the relationship between parabolic weights and monodromy is straightforward. However, complications arise when some eigenvalues of the isotropy representation (such as $\pm\alpha^j-\beta_i^j$'s for $\mathrm{SO}_0(2,q)$ case) take integer values. In such a case, the graded residue of the Higgs field must be involved, and the correspondence involves an additional unipotent element associated with it.
    
    Furthermore, if the graded residue of the Higgs field is non-zero, the Betti moduli space and the character variety generally fail to be isomorphic. In particular, there exist parabolic Higgs bundles corresponding to indecomposable representations. For a comprehensive treatment of the correspondence in the parabolic case, we refer to \cite[Table 1]{biquard2020parabolic}.
\end{remark}

By using bounded cohomology, the Toledo invariant was introduced in \cite{burger2010surface}. Since $\SO_0(2,q)$ is of Hermitian type, \textbf{Toledo invariant} is a function $\operatorname{Tol}\colon \mathfrak{X}(\Sigma_{g,s},\SO_0(2,q))\to\mathbb{R}$. In \cite[Theorem 1]{burger2010surface}, M. Burger, A. Iozzi, and A. Wienhard showed the following results.

\begin{proposition}\label{prop:tol1}
    $\operatorname{Tol}$ is continuous.
\end{proposition}

\begin{proposition}\label{prop:tol2}
    If $b$ is a simple closed curve on $\Sigma_{g,s}$ separating it into two subsurfaces $\Sigma^\prime$ and $\Sigma^\dprime$, then for every representation $\rho\colon\pi_1\left(\Sigma_{g,s}\right)\to\SO_0(2,q)$, 
    $$\operatorname{Tol}(\rho)=\operatorname{Tol}(\rho|_{\pi_1(\Sigma^\prime)})+\operatorname{Tol}(\rho|_{\pi_1(\Sigma^\dprime)})$$
\end{proposition}

We will denote by $\mathfrak{X}_h^{\tau}\big(\Sigma_{g,s},\SO_0(2,q)\big):=\operatorname{Tol}^{-1}(\tau)$. We will sometimes call this space a \textbf{relative component}, though it may not be connected in general.

Through $\mathsf{NAH}$, one can say something about the Toledo invariant of a parabolic $\SO_0(2,q)$-Higgs bundle. But it can also be constructed directly from the \textbf{Toledo character}, for more details, see \cite{biquard2017higgs} for the closed surface case and \cite{biquard2020parabolic} for the parabolic case. Under our setting, we have that

\begin{proposition}
    For a parabolic $\SO_0(2,q)$-Higgs bundle $(\LL^\vee\oplus\LL\oplus\VV,\Phi)$, 
    $$\operatorname{Tol}\left(\mathsf{NAH}\left(\left[(\LL^\vee\oplus\LL\oplus\VV,\Phi)\right]\right)\right)=\operatorname{pardeg}(\LL),$$
    up to multiplying a constant.
\end{proposition}

Therefore, we can restrict $\mathsf{NAH}$ to $\mathsf{NAH}\colon\mathcal{M}(\alpha,\beta,d)\to\mathfrak{X}_{h(\alpha,\beta)}^{d+|\alpha|}\big(\Sigma_{g,s},\SO_0(2,q)\big)$.

\section{Compact components in \texorpdfstring{$\mathcal{M}(\alpha,\beta)$}{M(α,β)}}\label{sec:cc}

In this section, we assume that $X=\mathbb{C}P^1$ is the complex projective line. We will consider a parabolic $\SO_0(2,q)$-Higgs bundle $(\EE=\LL^{\vee}\oplus\LL\oplus\VV,\Phi)$ over $(X,D)$ with non-degenerate bilinear form $Q_{\VV}$ on $\VV$, weight $\tau=(\tau^j)$ corresponds to the $\SO_0(2,q)$-weight $(\alpha,\beta)$ at $x_j$, that is, $\tau^j=T(\iu\alpha^j,\iu\beta_i^j)$, where $T$ is defined in \prettyref{eq:Cartan},
and the Higgs field $$\Phi=\begin{pmatrix}
    0&0&\eta\\
    0&0&\gamma\\
    -\gamma^*&-\eta^*&0
\end{pmatrix}.$$

We can add some divisors to $\LL$ to get an equivalent parabolic $\SO_0(2,q)$-Higgs bundle whose weights $\alpha^j\in[-1/2,1/2]$. If we assume $\alpha^j>\beta_1^j$ in addition, then the parabolic weights $\tau^j=T(\iu\alpha^j,\iu\beta_i^j)$ lie in
\[\{T(\iu\alpha,\iu\beta_i)\in\mathfrak{t}^{\mathbb{C}}\mid0\leqslant\alpha\leqslant1/2,1/2>\beta_1\geqslant\beta_2\geqslant\cdots\geqslant\beta_n\geqslant0\}.\]
Define $$|\alpha|:=\sum_{j=1}^s\alpha^j,\quad \left|\beta^j\right|:=\sum_{\{i|\beta_i^j\geqslant0\}}\beta_i^j,\quad |\beta|:=\sum_{j=1}^s\left|\beta^j\right|,\quad |\beta_i|:=\sum_{j=1}^s\beta_i^j.$$

We will prove \prettyref{thm:main2} in this section. 

\subsection{A compactness criterion}

\begin{proposition}\label{prop:compactnesscriterion}
    If a semistable parabolic $\SO_0(2,q)$-Higgs bundle $(\EE,\Phi)$ with parabolic weight $(\alpha,\beta)$ satisfies that
    \begin{itemize}
        \item[(1)] $\alpha^j>\beta_1^j$ for every $1\leqslant j\leqslant s$,

        \item[(2)] $|\alpha|-|\beta_1|<2$,

        \item[(3)]$2\operatorname{deg}(\LL)>-2+|\beta_1|-|\alpha|$,
    \end{itemize}
    then $\eta$ vanishes identically.
\end{proposition}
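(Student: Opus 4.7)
The plan is to argue by contradiction: assume $\eta\not\equiv 0$ and build an isotropic $\Phi$-invariant subbundle whose parabolic degree violates the criterion in \prettyref{prop:stability}.

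First I would check the holomorphic regularity of the two off-diagonal blocks. Under condition (1), for every index $l$ the pole exponent $\lceil\alpha^j-\beta_l^j\rceil-1$ of $\eta$ is non-negative, and likewise the analogous exponent for $\eta^*$ (governed by $\alpha^j+\beta_l^j\in(0,2\alpha^j)\subset(0,1)$), so both $\eta\in\mathrm{H}^0(\mathbb{C}P^1,\mathrm{Hom}(\VV,\LL^\vee)\otimes\KK)$ and $\eta^*\in\mathrm{H}^0(\mathbb{C}P^1,\mathrm{Hom}(\LL,\VV)\otimes\KK)$ are genuinely holomorphic. Their composition then lives in $\mathrm{H}^0(\mathbb{C}P^1,\LL^{-2}\otimes\KK^2)$, a line bundle of degree $-2\deg\LL-4$. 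Condition (2) says $|\beta_1|-|\alpha|>-2$, and plugged into (3) this yields $\deg\LL>-2$, so the line bundle has degree at most $-2<0$ and $\eta\circ\eta^*\equiv 0$.

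The vanishing $\eta\circ\eta^*=0$ places $\operatorname{Im}(\eta^*)$ inside $\ker\eta=\operatorname{Im}(\eta^*)^\perp$, so the saturated image $\mathcal{N}:=\overline{\operatorname{Im}(\eta^*)}\subset\VV$ is a rank-$1$ isotropic subbundle. The subbundle $\LL\oplus\mathcal{N}\subset\EE$ is therefore isotropic (since $\LL$ is Lagrangian in $\LL^\vee\oplus\LL$), and a direct check of the blocks of $\Phi$ shows it is $\Phi$-invariant: $\Phi(\LL)=-\eta^*(\LL)\subset\mathcal{N}$, and the $\LL^\vee$-component of $\Phi(\mathcal{N})$ vanishes because $\mathcal{N}\subset\ker\eta$ while the $\LL$-component is absorbed in $\LL\subset\LL\oplus\mathcal{N}$.

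Applying \prettyref{prop:stability} to this subbundle gives $\operatorname{pardeg}(\LL)+\operatorname{pardeg}(\mathcal{N})\leq 0$. I would then compute both terms in the $\SO_0(2,q)$-parabolic convention that makes $\operatorname{pardeg}(\EE)=0$ and under which \prettyref{lemma:pardegoforthocomplement} holds: condition (1) puts $\LL$ at the bottom of the $\EE$-flag at each $x_j$ so a telescoping produces an expression for $\operatorname{pardeg}(\LL)$ that differs from $\deg\LL$ by a term involving $|\alpha|$, while the saturation of $\eta^*\colon\LL\otimes\KK^{-1}\to\VV$ gives $\deg\mathcal{N}\geq\deg\LL+2$ and the weight of $\mathcal{N}_{x_j}$ inside the weight decomposition of $\VV_{x_j}$ is bounded by the top weight $\beta_1^j$, contributing at most $|\beta_1|$ to the parabolic part. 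Adding the two estimates yields $2\deg\LL\leq -2+|\beta_1|-|\alpha|$, directly contradicting condition (3), and so $\eta\equiv 0$.

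The main technical obstacle is the parabolic bookkeeping in the last step: one must track the weight shifts in the $\SO$-convention carefully (using \prettyref{lemma:pardegoforthocomplement} to transport $\operatorname{pardeg}(\mathcal{N})$ to $\operatorname{pardeg}(\ker\eta)$ when convenient) so that the terms involving $|\alpha|$ and $|\beta_1|$ on both sides combine to reproduce exactly the inequality in (3); any looser bound on the parabolic part only delivers a strictly stronger requirement than (3) and misses
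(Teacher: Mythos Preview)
Your approach is essentially the same as the paper's: both argue by contradiction, use condition~(1) to make $\eta,\eta^*$ holomorphic, kill $\eta\circ\eta^*$ by a degree count, take the saturated image of $\eta^*$ as a rank-$1$ isotropic $\mathcal{J}\subset\VV$, and apply \prettyref{prop:stability} to $\LL\oplus\mathcal{J}$.

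The only real difference is how the inequality $\deg\mathcal{J}\geq\deg\LL+2$ is obtained. The paper routes this through $\ker\eta$: from the exact sequence $0\to\ker\eta\to\VV\to\operatorname{im}\eta\otimes\KK\to0$ it gets $\deg(\ker\eta)\geq-\deg\LL^\vee+2$, and then invokes \prettyref{lemma:pardegoforthocomplement} to convert $\deg(\overline{\ker\eta})$ into $\deg\mathcal{J}$. Your route---observing that $\eta^*\colon\LL\otimes\KK^{-1}\to\VV$ is a nonzero map of line bundles into $\VV$, hence $\deg(\operatorname{im}\eta^*)=\deg\LL+2$ and saturation only increases degree---is more direct and avoids the orthocomplement lemma entirely. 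Both yield the same bound.

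Your worry about the parabolic bookkeeping is unfounded: no subtle transport is needed. Since $\mathcal{J}$ has rank~$1$, its parabolic weight contribution at each $x_j$ lies in $[-\beta_1^j,\beta_1^j]$, giving $\operatorname{pardeg}(\mathcal{J})\geq\deg\mathcal{J}-|\beta_1|$ directly; combined with $\operatorname{pardeg}(\LL)=\deg\LL+|\alpha|$ and $\deg\mathcal{J}\geq\deg\LL+2$, the semistability inequality $\operatorname{pardeg}(\LL)+\operatorname{pardeg}(\mathcal{J})\leq 0$ collapses immediately to $2\deg\LL\leq-2+|\beta_1|-|\alpha|$, contradicting~(3) on the nose. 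You do not need \prettyref{lemma:pardegoforthocomplement} anywhere.
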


\begin{proof}
    Since $\alpha^j>\beta_1^j$, by \prettyref{prop:vbofSO02q} we obtain that 
    \[\eta=\sum_{l=1}^qO\left(z^{\left\lceil\alpha^j-\beta_l^j\right\rceil}\right)\cdot e^\vee\otimes f_l\otimes\dfrac{\dd z}{z}=\sum_{l=1}^qO\left(1\right)\cdot e^\vee\otimes f_l\otimes\dd z\] with respect to the chosen frame $e^\vee,f_1,\dots,f_q$ on the coordinate chart $(U,z)$ centered at $x_j\in D$. Hence $$\eta\in\mathrm{H}^0\left(X,\operatorname{Hom}(\VV,\LL^{\vee})\otimes\KK\right)\mbox{, and }\eta^{\ast}\in\mathrm{H}^0\left(X,\operatorname{Hom}(\LL,\VV)\otimes\KK\right).$$ Therefore, $\eta\circ\eta^{\ast}\in\mathrm{H}^0\left(X,\operatorname{Hom}(\LL,\LL^\vee)\otimes\KK^2\right)\cong\mathrm{H}^0\left(X,(\LL^{\vee}\otimes\KK)^2\right)$. However, $$\begin{aligned}
        &\deg((\LL^{\vee}\otimes\KK)^2)\\
        =&2\left(\deg(\LL^{\vee})+\deg(\KK)\right)\\
        =&2\left(-\deg(\LL)-2\right)\\
        <&-4+2+|\alpha|-|\beta_1|\\
        <&-2+2=0,
    \end{aligned}$$
    so $\eta\circ\eta^*=0$. Let $N$ and $I\otimes\KK$ be the subsheaves of $\VV$ and $\LL^\vee\otimes\KK$ respectively given by the kernel and the image of $\eta$, thus $\eta$ induce the following short exact sequence of sheaves
    $$0\longrightarrow N\longrightarrow\VV\longrightarrow I\otimes\KK\longrightarrow0.$$
    Let $\mathcal{N}$ denote the saturation of $N$ in $\VV$ and if $\eta\neq0$ we get that $\operatorname{rank}(\mathcal{N})=q-1$ and the saturation of $I$ in $\mathcal{L}^{\vee}$ is $\LL^{\vee}$. Let $J\otimes\KK$ be the subsheaf of $\VV\otimes\KK$ given by the image of $\eta^\ast$ and $\mathcal{J}$ is the saturation of $J$ in $\VV$. Since for any $v\in N_x$, $l\in \LL_x$, where $x\in X$, we have
    $$\begin{aligned}
        &Q_{\VV}(v,\eta^\ast(l))\\
        =&(\eta^{\vee}(l))(v)\quad(\mbox{recall that }\eta^*=q_{\VV}^{-1}\circ\eta^{\vee})\\
        =&(\eta(v))(l)\\
        =&0,
    \end{aligned}$$
    we know that $\mathcal{J}=\mathcal{N}^{\perp}$.
    Similarly, for any $l,l'\in\LL_x$,
    $$\begin{aligned}
        &Q_{\VV}(\eta^\ast(l),\eta^\ast(l'))\\
        =&(\eta^{\vee}(l))(\eta^\ast(l'))\\
        =&((\eta\circ\eta^\ast)(l'))(l)\\
        =&0,
    \end{aligned}$$
    hence $\mathcal{J}$ is an isotropic subbundle of $\VV$. Therefore, semistability tells us that $\operatorname{pardeg}(\LL)+\operatorname{pardeg}(\mathcal{J})\leqslant0$ because $\LL\oplus\mathcal{J}$ is $\Phi$-invariant. Now we have
    $$\begin{aligned}
        0=&\deg(\VV)\\
        =&\deg(N)+\deg(I\otimes\KK)\\
        \leqslant&\deg(\mathcal{N})+\deg(\LL^{\vee})-2\\
        =&\deg(\mathcal{J})-\deg(\LL)-2\quad(\mbox{by \prettyref{lemma:pardegoforthocomplement}, }\deg(\mathcal{J})=\deg(\mathcal{N}))\\
        \leqslant&\operatorname{pardeg}(\mathcal{J})+|\beta_1|-\deg(\LL)-2\\
        \leqslant&-\operatorname{pardeg}(\mathcal{L})+|\beta_1|-\deg(\LL)-2\\
        =&-2\operatorname{deg}(\mathcal{L})+|\beta_1|-|\alpha|-2\\
        <&0,
    \end{aligned}$$
    contradiction. Thus $\eta$ vanishes identically.
\end{proof}

If $(\alpha,\beta)$ satisfies the condition in \prettyref{prop:compactnesscriterion}, then $\LL\oplus0$ is $\Phi$-invariant, hence $\operatorname{pardeg}(\LL)<0$ is a necessary condition of $\mathcal{M}(\alpha,\beta,d)$ contains a stable point. Hence the interval $$J_{\alpha,\beta}:=\left(-1+\dfrac{|\beta_1|-|\alpha|}{2},-|\alpha|\right)$$
contains an integer is a necessary condition of $\mathcal{M}(\alpha,\beta,d)$ contains a stable point. 

\begin{corollary}\label{coro:cpt}
If $J_{\alpha,\beta}$ contains an integer $d$, and $\alpha^j>\beta_1^j$ for every $1\leqslant j\leqslant s$, then $\mathcal{M}(\alpha,\beta,d)$ is compact.
\end{corollary}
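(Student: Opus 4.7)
The plan is to deduce compactness from the properness of the Hitchin fibration (Fact \ref{fact:hitchinproper}) by showing that every polystable class in $\mathcal{M}(\alpha,\beta,d)$ lies in the nilpotent cone $\Pi_{Hit}^{-1}(\{0\})$.

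First I would unpack the hypotheses of the corollary and match them with the three conditions of Proposition \ref{prop:compactnesscriterion}. Condition (1), $\alpha^j > \beta_1^j$, is directly assumed. The fact that $J_{\alpha,\beta} = (-1 + (|\beta_1|-|\alpha|)/2,\,-|\alpha|)$ contains an integer forces it to be nonempty, i.e., $-1 + (|\beta_1|-|\alpha|)/2 < -|\alpha|$, which rearranges to $|\alpha| + |\beta_1| < 2$ and in particular yields $|\alpha|-|\beta_1|<2$, namely condition (2). Taking $d \in J_{\alpha,\beta}$ with $d = \deg(\mathcal{L})$ gives exactly $2\deg(\mathcal{L}) > -2 + |\beta_1| - |\alpha|$, which is condition (3). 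Since a polystable Higgs bundle is semistable, Proposition \ref{prop:compactnesscriterion} applies to every representative of a point of $\mathcal{M}(\alpha,\beta,d)$ and forces $\eta \equiv 0$.

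Next I would exploit this vanishing to control the Hitchin invariants. With $\eta = 0$, the Higgs field takes the form
$$\Phi = \begin{pmatrix} 0 & 0 & 0 \\ 0 & 0 & \gamma \\ -\gamma^\ast & 0 & 0 \end{pmatrix},$$
and a direct block computation shows that the only nonzero block of $\Phi^2$ is $-\gamma\gamma^\ast$ in position $(2,1)$, and then $\Phi^3=0$ because the second column of $\Phi$ vanishes. Hence $\Phi$ is nilpotent, so $\operatorname{tr}(\Phi^i) = 0$ for every $i \geq 1$, and therefore $\Pi_{Hit}([(\mathcal{E},\Phi)]) = 0$ for every class in $\mathcal{M}(\alpha,\beta,d)$.

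Finally, by Fact \ref{fact:hitchinproper} the Hitchin fibration is proper, so $\Pi_{Hit}^{-1}(\{0\})$ is compact. Since $\mathcal{M}(\alpha,\beta,d) = \mathbf{d}^{-1}(d)$ is closed in $\mathcal{M}(\alpha,\beta)$ and we have shown $\mathcal{M}(\alpha,\beta,d) \subseteq \Pi_{Hit}^{-1}(\{0\})$, the subspace $\mathcal{M}(\alpha,\beta,d)$ is closed in a compact set and hence compact. The essential content has already been done in Proposition \ref{prop:compactnesscriterion}; the corollary is a packaging step and I do not anticipate any genuine obstacle beyond the elementary arithmetic translating $d \in J_{\alpha,\beta}$ into conditions (2) and (3) and the small matrix calculation establishing $\Phi^3 = 0$.
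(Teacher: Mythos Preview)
Your proposal is correct and follows essentially the same route as the paper: verify that $d\in J_{\alpha,\beta}$ together with $\alpha^j>\beta_1^j$ yields the three hypotheses of \prettyref{prop:compactnesscriterion}, conclude $\eta=0$ so that $\Phi$ is nilpotent, and then invoke properness of the Hitchin fibration. Your explicit computation that $\Phi^3=0$ and your remark that $\mathcal{M}(\alpha,\beta,d)=\mathbf{d}^{-1}(d)$ is closed are small elaborations the paper leaves implicit, but the argument is otherwise identical.
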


\begin{proof}
    Note that $J_{\alpha,\beta}$ contains an integer $d$ implies that $-1+\frac{|\beta_1|-|\alpha|}{2}<-|\alpha|$, which means that $$|\alpha|-|\beta_1|\leqslant|\alpha|+|\beta_1|<2.$$ 
    And also, $$2\operatorname{deg}(\LL)=2d>-2+|\beta_1|-|\alpha|,$$
    hence ${\alpha,\beta}$ satisfies the condition in \prettyref{prop:compactnesscriterion}, thus $\eta=0$. Therefore, the Higgs field $\Phi$ of every point in $\mathcal{M}(\alpha,\beta,d)$ is nilpotent and by the properness of Hitchin fibration \prettyref{prop:hitchinproper} this shows that $\mathcal{M}(\alpha,\beta,d)$ is compact due to Hitchin fibration is proper.
\end{proof}

\subsection{Underlying bundle of \texorpdfstring{$\mathcal{M}(\alpha,\beta,d)$}{M(α,β,d)} if \texorpdfstring{$d\in J_{\alpha,\beta}$}{d in Jα,β} and \texorpdfstring{$\alpha^j>\beta_1^j$}{αj>β1j}}

Actually, we can characterize the underlying bundle of points in $\mathcal{M}({\alpha,\beta},d)$ explicitly if $d\in J_{\alpha,\beta}$ and $\alpha^j>\beta_1^j$.

\begin{proposition}\label{prop:underbundle}
    Fix an $\SO_0(2,q)$-weight $(\alpha,\beta)$ which satisfies the condition in \prettyref{prop:compactnesscriterion}. If $(\LL^{\vee}\oplus\LL\oplus\VV,\Phi,{\alpha,\beta})$ is a semistable parabolic $\SO_0(2,q)$-Higgs bundle with $\deg(\LL)=d$ for $d\in J_{\alpha,\beta}$, then $\LL\cong\mathcal{O}(-1)$ and $\VV\cong\mathcal{O}^{\oplus q}$.
\end{proposition}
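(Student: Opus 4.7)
The plan is to combine \prettyref{prop:compactnesscriterion} with Grothendieck's splitting theorem on $\mathbb{C}P^1$ and repeated use of the semistability criterion of \prettyref{prop:stability}. First, \prettyref{prop:compactnesscriterion} forces $\eta\equiv 0$, so the Higgs field reduces to
\[
\Phi=\begin{pmatrix}0&0&0\\0&0&\gamma\\-\gamma^*&0&0\end{pmatrix}.
\]
By Grothendieck, $\LL\cong\mathcal{O}(d)$ and $\VV\cong\bigoplus_{i=1}^q\mathcal{O}(a_i)$ with $a_1\geq\cdots\geq a_q$, while the isomorphism $q_\VV:\VV\to\VV^\vee$ together with $\det\VV\cong\mathcal{O}$ forces the multiset $\{a_i\}$ to be symmetric, i.e.\ $a_i=-a_{q+1-i}$ and $\sum_i a_i=0$.

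Next, I would pin down $d=-1$. Since $d\in\mathbb{Z}\cap J_{\alpha,\beta}$, the bound $|\alpha|-|\beta_1|<2$ gives $d>-1-(|\alpha|-|\beta_1|)/2>-2$, while $d<-|\alpha|\leq 0$ gives $d\leq-1$; hence $d=-1$. The same computation shows that $-1\in J_{\alpha,\beta}$ if and only if $|\beta_1|<|\alpha|<1$, and these strict inequalities will be essential below. Then I would verify $\gamma\not\equiv 0$: otherwise $\gamma^*\equiv 0$ and $\Phi\equiv 0$, making $\LL^\vee$ an isotropic $\Phi$-invariant subbundle with $\operatorname{pardeg}(\LL^\vee)=1-|\alpha|>0$, contradicting \prettyref{prop:stability} applied with $\UU'=\LL^\vee$ and $\VV'=0$.

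The core of the argument is then to prove $a_1=0$, which by symmetry and monotonicity forces all $a_i=0$ and hence $\VV\cong\mathcal{O}^{\oplus q}$. The key observation is that for any isotropic subbundle $\VV'\subset\VV$, the subbundle $\LL\oplus\VV'$ is automatically $\Phi$-invariant, since $\Phi(\LL)=0$ (as $\eta^*=0$) and $\gamma(\VV')\subset\LL\otimes\KK(D)$. I would apply this to the canonical maximal positive-slope step $\VV^+\subset\VV$ of the Harder--Narasimhan filtration, isomorphic to $\bigoplus_{a_i>0}\mathcal{O}(a_i)$; it is isotropic because $Q_\VV|_{\VV^+\otimes\VV^+}$ factors through line bundles of strictly positive degree, which admit no non-zero global map to $\mathcal{O}$. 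Semistability then yields $\operatorname{pardeg}(\VV^+)\leq -\operatorname{pardeg}(\LL)=1-|\alpha|$. Assuming $a_1\geq 1$ for contradiction one has $\deg\VV^+\geq v:=\operatorname{rank}\VV^+\geq 1$, and combining with an upper bound on the parabolic contribution of $\VV^+$, exploiting the isotropy of $\VV^+_{x_j}$ in the isotropic flag $\VV^j_{\bullet}$ together with the tight inequalities $|\beta_1|<|\alpha|<1$, should yield a contradiction and force $v=0$.

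The main obstacle is this final numerical estimate. A crude bound only gives that the parabolic part of $\VV^+$ is at most $2v|\beta_1|$, which alone need not suffice; refining it using the isotropy constraint on $\VV^+_{x_j}$ inside the isotropic flag, or alternatively testing semistability against a second isotropic $\Phi$-invariant subbundle of the form $\LL^\vee\oplus\mathcal{I}$, where $\mathcal{I}\subset\mathcal{N}^{\perp}$ is the saturated image of $\gamma^*$ and $\mathcal{N}$ is the saturation of $\ker\gamma$ in $\VV$ (using the identity $\deg\mathcal{N}^\perp=\deg\mathcal{N}$, an extension of \prettyref{lemma:pardegoforthocomplement}), is the technical heart of the argument.
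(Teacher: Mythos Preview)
Your overall strategy matches the paper's: reduce to $\eta=0$ via \prettyref{prop:compactnesscriterion}, pin down $d=-1$ from the shape of $J_{\alpha,\beta}$, and then rule out $a_1>0$ by testing semistability on $\LL\oplus(\text{isotropic subbundle of }\VV)$, which is automatically $\Phi$-invariant once $\eta=0$. The verification that $\gamma\not\equiv0$ is unnecessary here.

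The ``main obstacle'' you flag is not a real obstacle, and your second suggested detour through $\LL^\vee\oplus\mathcal{I}$ is not needed. The estimate for $\VV^+$ closes directly: the parabolic contribution of a rank-$v$ subbundle at $x_j$ is bounded above by $\sum_{i=1}^v\beta_i^j$, so globally by $\sum_{i=1}^v|\beta_i|$ (not $2v|\beta_1|$). Since $\VV^+$ is isotropic one has $v\leqslant\lfloor q/2\rfloor$, hence each $|\beta_i|<|\alpha|$ by hypothesis~(1). Thus
\[
\operatorname{pardeg}(\VV^+)\;\geqslant\;\deg\VV^+-\sum_{i=1}^v|\beta_i|\;\geqslant\;v-\sum_{i=1}^v|\beta_i|,
\]
and one checks $v-\sum_{i=1}^v|\beta_i|>1-|\alpha|$: for $v=1$ this is $|\beta_1|<|\alpha|$, and for $v\geqslant2$ it follows from $v(1-|\alpha|)>1-|\alpha|$. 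This contradicts $\operatorname{pardeg}(\LL)+\operatorname{pardeg}(\VV^+)\leqslant0$.

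The paper avoids even this computation by testing against a \emph{rank-one} isotropic subbundle. If $a_1>0$ one invokes \cite[Proposition~4.2]{ramanan1981orthogonal} (or constructs directly a line $\mathcal{O}(a_1)\hookrightarrow\VV$ which is isotropic because any map $\mathcal{O}(2a_1)\to\mathcal{O}$ vanishes) to obtain an isotropic line subbundle $\VV_1$ of degree $a_1$. Then semistability gives
\[
-1+|\alpha|+a_1-|\beta_1|\;\leqslant\;\operatorname{pardeg}(\LL)+\operatorname{pardeg}(\VV_1)\;\leqslant\;0,
\]
so $a_1\leqslant 1-|\alpha|+|\beta_1|<1$, a contradiction. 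The rank-one choice makes the parabolic bound trivial (a line picks up at most $|\beta_1|$), which is why the paper's argument is a single line where yours requires a case split on $v$.
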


\begin{proof}
    Note that $$-1>\dfrac{|\beta_1|-|\alpha|}{2}-1>\dfrac{-2}{2}-1=-2$$
    and $-|\alpha|<0$, so $J_{\alpha,\beta}$ contains an integer if and only if $|\alpha|\in(0,1)$ and this integer must be $-1$. Hence $\LL$ must be isomorphic to $\mathcal{O}(-1)$.

    By Birkhoff--Grothendieck theorem, we can decompose $\VV$ into $\mathcal{O}(d_1)\oplus\cdots\oplus\mathcal{O}(d_q)$ for a unique $(d_1,\dots,d_q)\in\mathbb{Z}^q$ such that $d_1\geqslant d_2\geqslant\cdots\geqslant d_q$. Since $q_\VV$ induces an isomorphism between $\VV$ and its dual, we must have $d_{q+1-t}=-d_t$ for any $1\leqslant t\leqslant q$. If $d_1>0$, then by \cite[Proposition 4.2]{ramanan1981orthogonal} we can construct an isotropic subbundle $\VV_1$ of degree $d_1$ in $\VV$ with $\operatorname{rank}(\VV_1)\leqslant\operatorname{rank}(\mathcal{O}(d_1))=1$. And then $\operatorname{rank}(\VV_1)$ must be $1$ due to $d_1>0$. Therefore $\operatorname{pardeg}(\LL)+\operatorname{pardeg}(\VV_1)\leqslant0$ by semistability. This means that
    $$\begin{aligned}
        &-1+|\alpha|+d_1-|\beta_1|\leqslant\operatorname{pardeg}(\LL)+\operatorname{pardeg}(\VV_1)\leqslant0\\
        \Longrightarrow&\quad d_1\leqslant1-|\alpha|+|\beta_1|<1,
    \end{aligned}$$
    so $d_1=0=d_q$ and $\VV\cong\mathcal{O}^{\oplus q}$.
\end{proof}

\begin{remark}
    It would be possible to get other compact components with different values of $\deg(\LL)$ when relaxing the condition (W1). But for simplicity, we do not consider this here.
\end{remark}

\subsection{A linear-algebraic interpretation of stability}\label{sec:linearalgebraic}

From now on, we assume the $\SO_0(2,q)$-weight $(\alpha,\beta)$ satisfying
\begin{itemize}[leftmargin=2em, itemindent=1em]
    \item[(W1)] $\alpha^j>\beta_1^j$ for all $1\leqslant j\leqslant s$;

    \item[(W2)] $|\alpha|>|\beta|$;

    \item[(W3)] $|\alpha|+|\beta|<1$.
\end{itemize}
It is easy to verify that $(\alpha,\beta)$ satisfies the condition of \prettyref{prop:compactnesscriterion}, so by \prettyref{prop:underbundle} we get $\mathcal{M}(\alpha,\beta)$ has only one possible nonempty relative component $\mathcal{M}(\alpha,\beta,-1)$, and the underlying bundle of its point must be isomorphic to $\mathcal{O}(1)\oplus\mathcal{O}(-1)\oplus\mathcal{O}^{\oplus q}$. 

Now we examine what $$\gamma\in\mathrm{H}^0(X,\operatorname{Hom}(\mathcal{O}^{\oplus q},\mathcal{O}(-1))\otimes\KK(D))$$ really determines a semistable or stable parabolic $\mathrm{SO}_0(2,q)$-Higgs bundle.

By \prettyref{prop:stability}, we know that the parabolic $\mathrm{SO}_0(2,q)$-Higgs bundle determined by $\gamma$ is semistable if and only if
\begin{itemize}
    \item[(1)] $\operatorname{pardeg}(\mathcal{O}(-1)\oplus\VV_1)\leqslant0$ for any isotropic subbundle  $\VV_1\subset\mathcal{O}^{\oplus q}$,

    \item[(2)]
    $\operatorname{pardeg}(\mathcal{O}(1)\oplus\VV_2)\leqslant0$ for any isotropic subbundle $\VV_2\subset\mathcal{O}^{\oplus q}$ containing $\operatorname{im}{\gamma^*}$ (Note that an isotropic subbundle contains $\operatorname{im}{\gamma^*}$ implies that it is contained in $\operatorname{ker}\gamma$ so this direct sum is $\Phi$-invariant. See \prettyref{rem:isotropic} below for details.) and
    
    \item[(3)]
    $\operatorname{pardeg}(0\oplus\VV_3)\leqslant0$ for any isotropic subbundle $\VV_3 \subset\operatorname{ker}{\gamma}$.
\end{itemize}

\begin{remark}\label{rem:isotropic}
    Suppose $\VV^\prime$ is an isotropic (with respect to the bilinear form $Q$) subbundle of $\mathcal{O}^{\oplus q}$ containing $\operatorname{im}\gamma^*$. Then for any $v\in\mathcal\VV^{\prime}_x$, $l\in\mathcal{O}(1)_x$ where $x\in X$, 
    \[\begin{aligned}
        &0=Q(\gamma^*(l),v)\\
        =&(q\circ\gamma^*)(l)(v)\\
        =&\gamma^{\vee}(l)(v)\\
        =&\gamma(v)(l).
    \end{aligned}\]
    Therefore, $\gamma(v)=0$ and $\VV^{\prime}\subset\operatorname{ker}(\gamma)$.
\end{remark}

Since $\operatorname{Hom}(\mathcal{O}^{\oplus q},\mathcal{O}(-1))\otimes\KK(D)\cong\operatorname{Hom}(\mathcal{O}^{\oplus q},\mathcal{O})\otimes\mathcal{O}(s-3)$, by choosing a basis $\{e_1,\dots,e_{s-2}\}$ of $\mathrm{H}^0(X,\mathcal{O}(s-3))$, we can get a bijection

$$\begin{aligned}
    \left(\mathbb{C}^{1\times q}\right)^{s-2}&\longrightarrow \mathrm{H}^0(X,\operatorname{Hom}(\mathcal{O}^{\oplus q},\mathcal{O}(-1))\otimes\KK(D))\\
    \mathbf{A}=(A_i)_{i=1}^{s-2}&\longmapsto \sum_{i=1}^{s-2}A_i\otimes e_i=\gamma_{\mathbf{A}}.
\end{aligned}$$

Note that when $\alpha$, $\beta$ are fixed, the parabolic structure on $\mathcal{O}(1)\oplus\mathcal{O}(-1)\oplus\mathcal{O}^{\oplus q}$ is uniquely determined by $s$ isotropic flags 
\[\left(F_i^j\right)_{j=1}^{s}=\left(\left(\left(\mathcal{O}^{\oplus q}\right)_i^j\right)^{\perp}\right)_{j=1}^s\]
which correspond to the reverse isotropic flags $\left(\left(\mathcal{O}^{\oplus q}\right)_i^j\right)_{j=1}^{s}$ at $s$ marked points. Denote $\left(F_i^j\right)_{j=1}^s$ by $\mathbf{F}$. Therefore, every parabolic $\SO_0(2,q)$-Higgs bundle with the given assumptions on the weights 1-1 corresponds to an $(\mathbf{A},\mathbf{F})$.

Now fix an $(\mathbf{A},\mathbf{F})$. For any subspace $V'\subset\mathbb{C}^q$, we can define
$$\operatorname{pardeg}(V'):=\operatorname{pardeg}(V'\otimes\mathcal{O}),$$
where $V'\otimes\mathcal{O}$ is viewed as a parabolic subbundle of $\mathcal{O}^{\oplus q}$. Note that $|\operatorname{pardeg}(V')|\leqslant|\beta|$.

From this viewpoint, we can interpret the (semi-)stability condition as below. 

\begin{definition}
    For any $\mathbf{A}=(A_i)_{i=1}^{s-2}\in\left(\mathbb{C}^{1\times q}\right)^{s-2}$, we say $(\mathbf{A},\mathbf{F})$ is \textbf{semistable} if it satisfies the following two conditions:
    \begin{itemize}
        \item[(1)] there exists no isotropic subspace $V'$ of $\mathbb{C}^q$ such that $A_i^\mathrm{t}\in V'$ for all $i=1,\dots,s-2$.

        \item[(2)] if there is a coisotropic subspace $V'$ of $\mathbb{C}^q$ such that $A_i^\mathrm{t}\in V'$ for all $i=1,\dots,s-2$, then $\operatorname{pardeg}(V')\leqslant 0$.
    \end{itemize}
    
    In addition, if the inequality in (2) above is strict when $V'\neq\mathbb{C}^q$, then we say $(\mathbf{A},\mathbf{F})$ is \textbf{stable}.
\end{definition}

\begin{theorem}\label{thm:interpretion}
    For any $\SO_0(2,q)$-weight $(\alpha,\beta)$ satisfying (W1)-(W3), $(\mathbf{A},\mathbf{F})$ is semistable (resp. stable) if and only if the corresponding $\SO_0(2,q)$-Higgs bundle is semistable (resp. stable). Moreover, if $(\mathbf{A},\mathbf{F})$ is stable, then the parabolic $\SO_0(2,q)$-Higgs bundle determined by it is also stable as a parabolic $\SO(2+q,\mathbb{C})$-Higgs bundle.
\end{theorem}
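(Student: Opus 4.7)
The plan is to unpack the stability characterization of \prettyref{prop:stability} in our setting ($\mathcal{L} = \mathcal{O}(-1)$, $\mathcal{V} = \mathcal{O}^{\oplus q}$, $\eta \equiv 0$) and to match the resulting inequalities with the two combinatorial conditions in the definition of $(\mathbf{A}, \mathbf{F})$-(semi)stability. Since $\mathcal{L}^\vee \oplus \mathcal{L}$ has only the three isotropic subbundles $0$, $\mathcal{L}$, $\mathcal{L}^\vee$, the three test families listed after \prettyref{rem:isotropic} are exhaustive. I will use the hypothesis $\alpha^j > |\beta^j|$ (which yields $|\alpha| > |\beta|$), together with $|\alpha| + |\beta| < 1$ and the elementary estimate $|\operatorname{pardeg}(\mathcal{V}') - \deg(\mathcal{V}')| \leq |\beta|$ valid for any subbundle $\mathcal{V}' \subset \mathcal{O}^{\oplus q}$.

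The key observation is twofold. First, family (1) (i.e.\ $\mathcal{U}' = \mathcal{L}$) is automatic, since $\operatorname{pardeg}(\mathcal{L}) + \operatorname{pardeg}(\mathcal{V}') \leq -1 + |\alpha| + |\beta| < 0$ for any isotropic $\mathcal{V}'$. Second, any isotropic $\mathcal{V}' \subset \mathcal{O}^{\oplus q}$ with $\deg \mathcal{V}' \leq -1$ satisfies $\operatorname{pardeg}(\mathcal{V}') \leq -1 + |\beta|$, which is strictly less than both $0$ (since $|\beta| < 1$) and $\operatorname{pardeg}(\mathcal{L}) = -1 + |\alpha|$ (since $|\beta| < |\alpha|$); hence families (2) and (3) are automatic for such \textit{non-trivial} $\mathcal{V}'$. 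The binding tests therefore reduce to trivial subbundles $\mathcal{V}' = V' \otimes \mathcal{O}$ with $V' \subset \mathbb{C}^q$. For these, the condition $\operatorname{im}\gamma^* \subset V' \otimes \mathcal{O}$ is equivalent to $A_i^{\mathrm{t}} \in V'$ for every $i$ (extract the $A_i^{\mathrm{t}}$ as coefficients of $\sum_i A_i^{\mathrm{t}} e_i$ by evaluating at sufficiently many points and using that $\{e_i\}$ is a basis of $\mathrm{H}^0(\mathcal{O}(s-3))$), while $V' \otimes \mathcal{O} \subset \ker \gamma$ is equivalent to $V' \subset (\operatorname{span}\{A_i^{\mathrm{t}}\})^{\perp}$. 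In family (2), the required inequality $\operatorname{pardeg}(V' \otimes \mathcal{O}) \leq -1 + |\alpha|$ cannot hold since $\operatorname{pardeg}(V' \otimes \mathcal{O}) \geq -|\beta| > -1 + |\alpha|$, so semistability forbids the existence of any isotropic $V'$ containing all $A_i^{\mathrm{t}}$---precisely condition (1) of $(\mathbf{A}, \mathbf{F})$-semistability. In family (3), applying \prettyref{lemma:pardegoforthocomplement} to replace $V' \otimes \mathcal{O}$ by $(V')^{\perp} \otimes \mathcal{O}$ converts $\operatorname{pardeg}(V' \otimes \mathcal{O}) \leq 0$ into $\operatorname{pardeg}((V')^{\perp} \otimes \mathcal{O}) \leq 0$ for coisotropic $(V')^{\perp}$ containing all $A_i^{\mathrm{t}}$---this is condition (2). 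Using strict inequalities throughout yields the stability version.

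For the final assertion---stability as a parabolic $\SO(2+q, \mathbb{C})$-Higgs bundle---one must test $\operatorname{pardeg}(\mathcal{E}') < 0$ for every $\Phi$-invariant isotropic subbundle $\mathcal{E}' \subset \mathcal{E}$, including those not of direct-sum form $\mathcal{U}' \oplus \mathcal{V}'$ compatible with the decomposition $\mathcal{L}^\vee \oplus \mathcal{L} \oplus \mathcal{V}$. Here I anticipate the main technical obstacle. My plan is to pass to $\mathcal{U}'' := \mathcal{E}' \cap (\mathcal{L}^\vee \oplus \mathcal{L})$ and $\mathcal{V}'' := \mathcal{E}' \cap \mathcal{V}$; since $\eta = 0$ makes $\Phi$ send $\mathcal{L}^\vee \oplus \mathcal{L}$ into $\mathcal{V}$ and $\mathcal{V}$ into $\mathcal{L}^\vee \oplus \mathcal{L}$, a direct verification shows that $\mathcal{U}'' \oplus \mathcal{V}''$ is itself isotropic and $\Phi$-invariant, so the $\SO_0(2, q)$-stability proven above applies to it. One then has to control the quotient $\mathcal{E}'/(\mathcal{U}'' \oplus \mathcal{V}'')$ by analyzing how $\mathcal{E}'$ can ``glue'' components from the two summands in an isotropic, $\Phi$-invariant way, using the numerical hypothesis $|\alpha| + |\beta| < 1$ to absorb the corresponding contribution to the parabolic degree.
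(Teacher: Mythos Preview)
Your argument for the semistability/stability equivalence is correct and is essentially the paper's proof: you kill family (1) outright via $\operatorname{pardeg}(\mathcal{L}) + \operatorname{pardeg}(\mathcal{V}') \le -1 + |\alpha| + |\beta| < 0$, dispose of families (2) and (3) when $\deg\mathcal{V}' \le -1$ by the crude bound $\operatorname{pardeg}(\mathcal{V}') \le -1 + |\beta|$, and then translate the surviving constraints on trivial subbundles $V'\otimes\mathcal{O}$ into conditions (1) and (2) on $(\mathbf{A},\mathbf{F})$ using \prettyref{lemma:pardegoforthocomplement}. This matches the paper both in strategy and in detail.

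For the final assertion on $\SO(2+q,\mathbb{C})$-stability, you are being more cautious than the paper, but your proposed route through $\mathcal{U}'' \oplus \mathcal{V}''$ and the quotient $\mathcal{E}'/(\mathcal{U}''\oplus\mathcal{V}'')$ is unnecessarily indirect, and you leave the decisive quotient estimate unproved. The paper simply checks the three split cases $\mathcal{E}'\subset\mathcal{V}$, $\mathcal{E}'=\mathcal{O}(-1)\oplus\mathcal{V}'$, $\mathcal{E}'=\mathcal{O}(1)\oplus\mathcal{V}'$ and bounds $\operatorname{pardeg}$ directly. The reason one may restrict to these (which the paper leaves implicit) is a short Birkhoff--Grothendieck argument on $\mathbb{P}^1$, not a filtration argument: if $\deg(\mathcal{E}')\le -1$ then $\operatorname{pardeg}(\mathcal{E}') \le -1 + |\alpha| + |\beta| < 0$ outright; if $\deg(\mathcal{E}')\ge 0$ then some summand of $\mathcal{E}'$ has degree $\ge 0$, and any $\mathcal{O}(1)$-summand is forced to equal $\mathcal{L}^\vee$ (the maps $\mathcal{O}(1)\to\mathcal{O}(-1)$ and $\mathcal{O}(1)\to\mathcal{O}$ vanish), whence isotropy gives $\mathcal{E}'\subset(\mathcal{L}^\vee)^\perp=\mathcal{L}^\vee\oplus\mathcal{V}$ and thus $\mathcal{E}'=\mathcal{L}^\vee\oplus\mathcal{V}'$; if instead all summands of $\mathcal{E}'$ have degree $0$ then $\mathcal{E}'\cong\mathcal{O}^{\oplus r}$, which forces $\pi_{\mathcal{L}}|_{\mathcal{E}'}=0$, and combined with $\Phi$-invariance one lands back in one of the listed cases. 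So the non-split subbundles you worry about either have negative degree (harmless) or do not arise, and the intersection/quotient machinery can be avoided entirely.
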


\begin{proof}
    We first suppose that $(\mathbf{A},\mathbf{F})$ is semistable. It suffices to check (1)(2)(3) mentioned at the beginning of this subsection. Note that for any isotropic subbundle $\mathcal{V}_1\subset\mathcal{O}^{\oplus q}$, 
    $$\operatorname{pardeg}(\mathcal{O}(-1)\oplus\mathcal{V}_1)\leqslant|\alpha|-1+|\beta|<0\quad\mbox{by (W3)}.$$

    For any isotropic subbundle $\VV_2\subset\VV$ containing $\operatorname{im}\gamma_{\mathbf{A}}^*$, semistability of $(\mathbf{A},\mathbf{F})$ tells us that $\deg(\VV_2)\leqslant-1$. Indeed, if $\deg(\VV_2)=0$, then $\VV_2=V_2\otimes\mathcal{O}$ for some isotropic subspace $V_2\subset\mathbb{C}^q$ by Birkhoff--Grothendieck theorem and $\operatorname{im}\gamma_{\mathbf{A}}^*\subset\VV_2$ shows that $A_j^\mathrm{t}\in V_2$ for any $j$, which contradicts the semistability of $(\mathbf{A},\mathbf{F})$. Therefore, $$\operatorname{pardeg}(\mathcal{O}(1)\oplus\VV_2)\leqslant 1-|\alpha|-1+|\beta|<0\quad\mbox{by (W2)}.$$ 
    Now we fix an isotropic subbundle $\mathcal{V}_3$ of $\operatorname{ker}\gamma_{\mathbf{A}}$. If $\deg(\VV_3)\leqslant-1$, then
    $$\operatorname{pardeg}(\VV_3)\leqslant-1+|\beta|<0\quad\mbox{by (W3)}.$$
    If $\deg(\VV_3)=0$, then $\VV_3=V_3\otimes\mathcal{O}$ for some isotropic subspace $V_3\subset\mathbb{C}^q$ as above such that $A_j(V_3)=0$, therefore $A_j^{\mathrm{t}}\in(V_3)^\perp$, hence by \prettyref{lemma:pardegoforthocomplement}
    $$\operatorname{pardeg}(\VV_3)=\operatorname{pardeg}((\VV_3)^{\perp})=\operatorname{pardeg}((V_3)^{\perp})\leqslant0.$$
    
    If $(\mathbf{A},\mathbf{F})$ is not semistable, there are two possible cases.
    \begin{itemize}
        \item[(1)] There exists an isotropic subspace $V'$ of $\mathbb{C}^q$ such that $A_i^\mathrm{t}\in V'$ for all $i=1,\dots,s-2$. Then $V'\otimes\mathcal{O}$ is an isotropic subbundle containing $\operatorname{im}\gamma_{\mathbf{A}}^*$ and
        $$\operatorname{pardeg}(\mathcal{O}(1)\oplus (V'\otimes\mathcal{O}))\geqslant 1-|\alpha|-|\beta|>0\quad\mbox{by (W3)},$$
        which means the $\SO_0(2,q)$-Higgs bundle determined by $(\mathbf{A},\mathbf{F})$ is not semistable.

        \item[(2)] There exists a coisotropic subspace $V'$ of $\mathbb{C}^q$ such that $A_i^\mathrm{t}\in V'$ for all $i=1,\dots,s-2$ and $\operatorname{pardeg}(V')>0$. Then $(V')^{\perp}\otimes\mathcal{O}$ is an isotropic subbundle of $\operatorname{ker}(\gamma_{\mathbf{A}})$ and $$\operatorname{pardeg}((V')^{\perp}\otimes\mathcal{O})=\operatorname{pardeg}(V'\otimes\mathcal{O})=\operatorname{pardeg}(V')>0,$$
        which also shows the $\SO_0(2,q)$-Higgs bundle determined by $(\mathbf{A},\mathbf{F})$ is not semistable.
    \end{itemize}

    The proof of equivalence of stability is similar and we omit it. Now suppose $(\mathbf{A},\mathbf{F})$ is stable, we prove that the stable parabolic $\SO_0(2,q)$-Higgs bundle determined by $(\mathbf{A},\mathbf{F})$ is also stable as parabolic $\SO(2+q,\mathbb{C})$-Higgs bundle. Fix a $\Phi$-invariant isotropic subbundle $\mathcal{E}^\prime$.

    \begin{itemize}
        \item[(1)] If $\mathcal{E}^\prime\subset\mathcal{V}$, then $\operatorname{pardeg}(\mathcal{E}^\prime)\leqslant0$ and it is strict when $\mathcal{E}^\prime\neq0$.

        \item[(2)] If $\mathcal{E}^\prime=\mathcal{O}(-1)\oplus\VV^\prime$ for some isotropic $\VV^\prime\subset\mathcal{V}$, then $\operatorname{pardeg}(\mathcal{E}^\prime)\leqslant-1+|\alpha|+|\beta|<0$ by (W3).

        \item[(3)]  If $\mathcal{E}^\prime=\mathcal{O}(1)\oplus\VV^\prime$ for some isotropic $\VV^\prime\subset\mathcal{V}$, then $\deg(\VV^\prime)\leqslant -1$ and $\operatorname{pardeg}(\mathcal{E}^\prime)\leqslant-|\alpha|+|\beta|<0$ by (W2).
    \end{itemize}
    Therefore by \prettyref{prop:stabsonC}, $(\mathcal{O}(1)\oplus\mathcal{O}(-1)\oplus\VV,\Phi)$ corresponds to $(\mathbf{A},\mathbf{F})$ is stable as parabolic $\SO(2+q,\mathbb{C})$-Higgs bundle.
\end{proof}

\begin{corollary}\label{coro:stablepoint}
    If $s\geqslant q+2$, there exists a $\gamma\in\mathrm{H}^0(X,\operatorname{Hom}(\mathcal{O}^{\oplus q},\mathcal{O}(-1))\otimes\KK(D))$ such that the $\SO_0(2,q)$-Higgs bundle of weight $(\alpha,\beta)$ determined by it is stable and simple.
\end{corollary}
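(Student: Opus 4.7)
The plan is to translate the statement via \prettyref{thm:interpretion} into a purely linear-algebraic existence problem: we need to exhibit a tuple $\mathbf{A}=(A_1,\dots,A_{s-2})\in(\mathbb{C}^{1\times q})^{s-2}$ such that, together with the prescribed reverse isotropic flags $\mathbf{F}$, the pair $(\mathbf{A},\mathbf{F})$ is stable in the linear-algebraic sense defined just before \prettyref{thm:interpretion}. The $\SO_0(2,q)$-Higgs bundle stability will then follow automatically.

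The hypothesis $s\geqslant q+2$ enters through the count $s-2\geqslant q$: we have at least $q$ column vectors $A_i^{\mathrm{t}}\in\mathbb{C}^q$ at our disposal, so a generic choice of $\mathbf{A}$ has $\{A_1^{\mathrm{t}},\dots,A_{s-2}^{\mathrm{t}}\}$ spanning the whole $\mathbb{C}^q$. Concretely, I would take the first $q$ of the $A_i^{\mathrm{t}}$ to be the standard basis of $\mathbb{C}^q$ (and the remaining $s-2-q$ arbitrary, e.g.\ zero). With such $\mathbf{A}$, the smallest subspace of $\mathbb{C}^q$ containing every $A_i^{\mathrm{t}}$ is $\mathbb{C}^q$ itself.

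With this spanning property, both stability conditions are immediate. For condition (1): $\mathbb{C}^q$ is not isotropic (as $q\geqslant 2$ and $Q_{\VV}$ is non-degenerate), so no isotropic subspace of $\mathbb{C}^q$ contains all of the $A_i^{\mathrm{t}}$. For condition (2): the only coisotropic subspace of $\mathbb{C}^q$ containing all of the $A_i^{\mathrm{t}}$ is $\mathbb{C}^q$ itself, so the strict-inequality requirement on \emph{proper} coisotropic subspaces $V'\subsetneq\mathbb{C}^q$ is vacuously satisfied. Hence $(\mathbf{A},\mathbf{F})$ is stable, and \prettyref{thm:interpretion} produces the desired stable parabolic $\SO_0(2,q)$-Higgs bundle with underlying bundle $\mathcal{O}(1)\oplus\mathcal{O}(-1)\oplus\mathcal{O}^{\oplus q}$ and Higgs field prescribed by $\gamma=\gamma_{\mathbf{A}}$ under the identification fixed in \prettyref{sec:linearalgebraic}.

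There is essentially no hard step here: the bound $s\geqslant q+2$ is exactly what lets us drop the flags $\mathbf{F}$ from the discussion by killing every candidate destabilizing subspace at the level of the ambient $\mathbb{C}^q$. If one instead wanted a sharper statement (say, produce a Zariski-open set of stable $\mathbf{A}$), the only mild subtlety would be showing that ``$\{A_i^{\mathrm{t}}\}$ spans $\mathbb{C}^q$'' is a Zariski-open condition on $(\mathbb{C}^{1\times q})^{s-2}$, which is standard. For the corollary as stated, a single explicit choice suffices.
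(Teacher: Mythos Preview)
Your proof is correct and follows essentially the same approach as the paper: choose $\mathbf{A}$ so that the $A_i^{\mathrm{t}}$ span $\mathbb{C}^q$ (possible since $s-2\geqslant q$), observe that then no proper subspace---isotropic or coisotropic---can contain all the $A_i^{\mathrm{t}}$, and invoke \prettyref{thm:interpretion}. The paper's proof is just a terser version of exactly this argument.
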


\begin{proof}
    Since $s-2\geqslant q$, we can choose $A_i$ such that $A_i^{\mathrm{t}}$ spans $\mathbb{C}^q$. Therefore for any $\mathbf{F}$, $(\mathbf{A},\mathbf{F})$ is stable. Thus it determines a stable parabolic $\SO_0(2,q)$-Higgs bundle of weight $(\alpha,\beta)$. Also $\operatorname{Aut}(\mathbb{E},\Phi)=\{\pm I_{2+q}\}\cap\SO(2+q,\mathbb{C})=Z(\SO(2+q,\mathbb{C}))\cap\operatorname{ker}\iota$ follows since $A_i$ spans $\mathbb{C}^{1\times q}$.
\end{proof}

\begin{remark}
    To show that the desired moduli space is non-empty, the strengthened assumptions (W2)(W3) are strongly used here to control the parabolic degree of the chosen isotropic subbundle. 
\end{remark}

\subsection{A GIT construction}

In this subsection, we would like to construct a space with an $\SO(2,\mathbb{C})\times\SO(q,\mathbb{C})$-linearization such that the corresponding GIT quotient is isomorphic to $\mathcal{M}(\alpha,\beta,-1)$ with fixed $\SO_0(2,q)$-weight $(\alpha,\beta)$ satisfying (W1)-(W3). It will be proven to be a projective variety and this will complete the proof of \prettyref{thm:main2}. Below we will use the concepts in Mumford's Geometric Invariant Theory \cite{mumford1994geometric}, often called GIT for short, involved in \cite[Section 3.1]{tholozan2021compact}. One can also see \cite{Newstead2013IntroductionTM} and \cite{thomas2005notes} for references. 

We fix the base field $\mathbb{C}$. Let $Y$ be a smooth quasi-projective variety with an algebraic action of a complex reductive algebraic group $G$. And let $Z$ be the kernel of this action, i.e. the subgroup of $G$ that acts trivially on $Y$. A \textbf{$G$-linearized line bundle} $\LL$ is a line bundle over $Y$ equipped with an algebraic $G$-action on the total space of $\LL$ that lifts the action on $Y$, and such that $Z$ acts trivially on $\LL$. For a polarized variety $(Y,\LL)$ with a $G$-linearization, we use $Y^{\LL}\sslash G$ to denote the GIT quotient. We also denote by $\mu_{\LL}(\lambda,x)$ the Hilbert--Mumford weight for a one parameter subgroup $\lambda\colon\mathbb{C}^*\to G$ and a point $x\in Y$.

We consider only complete isotropic flags, i.e., isotropic flags
$$0=F_0\subset F_1\subset\cdots\subset F_p=\mathbb{C}^p$$
satisfying $\dim F_i=i$ for our convenience. This corresponds to the situation of $\beta_1^j>\cdots>\beta_q^j$ for every $j=1,\dots,s$ and one can see all the discussion in this subsection can be generalized to partial flags. We denote the set of complete isotropic flags of $\mathbb{C}^p$ by $\mathcal{IF}(\mathbb{C}^p)$. When $p\geqslant2$, let $\operatorname{Gr}_i(\mathbb{C}^p)$ denote the Grassmannian of $i$-dimensional subspaces of $\mathbb{C}^p$, and define
$$\begin{aligned}
    \iota_i\colon \mathcal{IF}(\mathbb{C}^p)&\longrightarrow\operatorname{Gr}_i(\mathbb{C}^p)\\
    (F_j)_{j=0}^p&\longmapsto F_i,
\end{aligned}$$
hence $(\iota_i)_{i=1}^{p-1}$ embeds $\mathcal{IF}(\mathbb{C}^p)$ into $\prod_{i=1}^{p-1}\operatorname{Gr}_i(\mathbb{C}^p)$. Note that on every Grassmannian $\operatorname{Gr}_i(\mathbb{C}^p)$ there exists a tautological line bundle $\mathcal{O}_i(-1)$ induced from the Pl\"ucker embedding and therefore from its inverse and tensor product we get $\mathcal{O}_i(n)$ for all $n\in\mathbb{Z}$. Define 
$$\mathcal{O}(a_1,\dots,a_{p-1}):=\bigotimes_{i=1}^{p-1}\iota_i^*\mathcal{O}_i(a_i),$$
where $(a_i)_{i=1}^{p-1}\in\mathbb{Z}^{p-1}$.

The group $\SO(p,\mathbb{C})$ acts on each $\operatorname{Gr}_i(\mathbb{C}^p)$ with kernel $\pm I_p$. There is a canonical lift of this action to
the total space of $\mathcal{O}_i(1)$, such that $\pm I_p$ acts by multiplication by $(\pm 1)^{-i}$ on each fiber. Therefore, 
$\mathcal{O}_i(2n)$ is $\SO(p,\mathbb{C})$-linearized for any $n\in\mathbb{Z}$.

\begin{remark}\label{rem:differentGIT}
    Note that $\mathrm{GL}(p,\mathbb{C})$ has kernel $\mathbb{C}^*\cdot I_p$, hence in \cite[Section 3]{tholozan2021compact} they need to define the new action
     $$\begin{aligned}
         \mathrm{GL}(p,\mathbb{C})\times\mathcal{O}_i(p)&\longrightarrow\mathcal{O}_i(p)\\
         (g,v)&\longmapsto\det(g)^i\cdot g(v)=:g\cdot v
     \end{aligned}$$
     on $\mathcal{O}_i(p)$, where $g(v)$ is the natural action induced by the natural $\mathrm{GL}(p,\mathbb{C})$-action on $\mathcal{O}_i(-1)$.  This forces the kernel of $\mathrm{GL}(p,\mathbb{C})$ into acting trivially. It will involve the dimension term in computation of Hilbert--Mumford weight. But for $\SO(p,\mathbb{C})$, $\det(g)\equiv 1$ implies that $g\cdot v$ and $g(v)$ coincides and we can use $\mathcal{O}_i(2n)$ as our line bundle. This will help us  get rid of dimension terms. One may compare \prettyref{prop:flagHMweight} with \cite[Proposition 3.20]{tholozan2021compact}.
\end{remark}

Now for any one parameter subgroup $\lambda\colon\mathbb{C}^*\to \SO(p,\mathbb{C})$, it is given by $\lambda(\exp(t))=\exp(tu)$
for an endomorphism $u$ of $\mathbb{C}^p$ which can be diagonalized as $\operatorname{diag}(m_1,\dots,m_p)$ under an isotropic basis $\{v_i\}_{i=1}^p$ with $m_1,m_2,\dots,m_p\in\mathbb{Z}$ (since we need $\lambda(\exp(2\pi\iu))$ to be the identity), $m_1\geqslant m_2\geqslant\cdots\geqslant m_p$ and $m_i+m_{p+1-i}=0$. Define $$U_n(\lambda):=\bigoplus_{m_i\geqslant n}\mathbb{C}v_i.$$
Then $U_n(\lambda)$ gives an isotropic filtration of $\mathbb{C}^p$, explicitly,  $U_n(\lambda)=(U_{1-n}(\lambda))^{\perp}$.

\begin{lemma}\label{lemma:calculate}
    Suppose $\lambda\colon\mathbb{C}^*\to \SO(p,\mathbb{C})$ is a one parameter subgroup and $U_n(\lambda)$ is the isotropic filtration defined as above, then for any $F\in\operatorname{im}\iota_i$, 
    $$\mu_{\mathcal{O}_i(2m)}(\lambda,F)=\dfrac{2m}{p}\cdot\sum_{n\in\mathbb{Z}}\left[i\cdot\dim(U_n(\lambda))-p\cdot\dim(U_n(\lambda)\cap F)\right]$$
\end{lemma}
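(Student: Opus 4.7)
The plan is to trace the definition of $\mu_{\mathcal{O}_i(2m)}(\lambda,F)$ through the Pl\"ucker embedding and then perform a summation by parts. First I would describe the limit $F_0:=\lim_{t\to 0}\lambda(t)\cdot F$ inside $\operatorname{Gr}_i(\mathbb{C}^p)$. Using the $\lambda$-eigenbasis $\{v_k\}$ of $\mathbb{C}^p$ and picking a basis of $F$ adapted to the filtration $F\cap U_\bullet(\lambda)$, extracting leading terms in $t$ as $t\to 0$ shows that $F_0$ is $\lambda$-invariant, hence graded, and that
$$\dim(F_0)_n=\dim\bigl(F\cap U_n(\lambda)\bigr)-\dim\bigl(F\cap U_{n+1}(\lambda)\bigr),$$
where $(F_0)_n$ denotes the weight-$n$ eigenspace of $\lambda$ on $F_0$.

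Second, under the Pl\"ucker embedding the fiber of $\mathcal{O}_i(-1)$ at $F_0$ is the line $\wedge^i F_0$, and $\lambda(t)$ acts on this line by the scalar $t^S$ with
$$S:=\sum_n n\,\dim(F_0)_n.$$
Since $\mathcal{O}_i(2m)=\mathcal{O}_i(-1)^{\otimes(-2m)}$, the convention of \prettyref{sec:GIT} together with the definition of the Hilbert--Mumford weight on tensor powers gives $\mu_{\mathcal{O}_i(2m)}(\lambda,F)$ as a signed multiple of $2mS$.

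Third, I would rewrite $S$ via a summation by parts. Write $a_n:=\dim(F\cap U_n(\lambda))$ and $b_n:=\dim U_n(\lambda)$. Regularizing on a large interval $[N_-,N_+]$, Abel summation applied to $\sum n(a_n-a_{n+1})$, together with the boundary values $a_{N_-}=i$ and $a_{N_++1}=0$ for $|N_-|,N_+\gg 0$, gives $\sum_{n=N_-+1}^{N_+}a_n=S-N_-\,i$. The identical computation for $b_n$ gives $\sum_{n=N_-+1}^{N_+}b_n=-N_-\,p$, since $\sum_n n(b_n-b_{n+1})=\operatorname{tr}(u)=0$ by virtue of $u\in\mathfrak{so}(p,\mathbb{C})$. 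Combining these two identities eliminates the $N_-$-dependence and yields the finite equality $\sum_n[i\,b_n-p\,a_n]=-pS$; dividing by $p$ and multiplying by $2m$ produces the stated formula.

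The main obstacle is the bookkeeping in this summation by parts: the individual sums $\sum_n a_n$ and $\sum_n b_n$ are each divergent (the tails are constantly equal to $i$ and $p$ respectively for $n\ll 0$), and only the weighted combination $\sum_n[i\,b_n-p\,a_n]$ is finite. The orthogonal input $\operatorname{tr}(u)=0$ is precisely what forces the divergent boundary contributions to cancel, and it is responsible for the prefactor $1/p$ appearing in the final expression.
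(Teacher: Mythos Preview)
Your proof is correct and follows essentially the same route as the paper: both arguments reduce to a summation by parts and then use the trace-zero condition $\sum_k m_k=0$ (your $\operatorname{tr}(u)=0$) to cancel the divergent boundary term. The only real difference is the starting point: the paper quotes the closed formula $\mu_{\mathcal{O}_i(1)}(\lambda,F)=-i\,m_p+\sum_{k=1}^{p-1}\dim(F\cap U_{m_k})(m_{k+1}-m_k)$ from Newstead/Mumford and then reindexes, whereas you derive the weight directly by identifying the limit $F_0$ and computing the $\lambda$-action on $\wedge^i F_0$ under Pl\"ucker. Your route is a bit more self-contained; the paper's is shorter because it outsources the first step. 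The one place to tighten is your hedge ``a signed multiple of $2mS$'': you should pin down the sign explicitly (a single one-dimensional example suffices), since the literature is not uniform on the sign convention for $\mu$ and your final step ``produces the stated formula'' silently assumes the sign comes out as $-2mS$.
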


\begin{proof}
    Note that the RHS above has only finitely many nonzero terms, so it is well-defined. It follows from the example in \cite[Chapter 4,\S 6]{Newstead2013IntroductionTM} or \cite[Chapter 4,\S 4]{mumford1994geometric}. In the example mentioned above, we have
    $$\mu_{\mathcal{O}_i(1)}(\lambda,F)=-i\cdot m_p+\sum_{k=1}^{p-1}\dim(F\cap U_{m_k}(\lambda))(m_{k+1}-m_k)$$
    for $\mathrm{PSL}\left(\binom{p}{i}+1,\mathbb{C}\right)$-action induced by Pl\"ucker embedding.
    Hence under our setting, we obtain
    $$\begin{aligned}
        &\mu_{\mathcal{O}_i(2m)}(\lambda,F)\\=&2m\cdot\left[-i\cdot m_p+\sum_{k=1}^{p-1}\dim(F\cap U_{m_k}(\lambda))(m_{k+1}-m_k)\right]\\
        =&2m\cdot\left[-i\cdot m_p-\sum_{n\geqslant m_{p}+1}\dim(F\cap U_{n}(\lambda))\right]
    \end{aligned}$$
    since $U_{m_k}(\lambda)=U_{m_k-1}(\lambda)=\cdots=U_{m_{k+1}+1}(\lambda)$. Now by $\dim(F\cap U_{n}(\lambda))=i$, $\dim(U_n(\lambda))=p$ for $n\leqslant m_p$, the above formula equals to
    $$\begin{aligned}
        &2m\cdot\left[-i\cdot m_p-\sum_{n\geqslant m_{p}+1}\dim(F\cap U_{n}(\lambda))-\sum_{n\leqslant m_p}\left(\dim(F\cap U_{n}(\lambda))-\dfrac{i}{p}\cdot\dim(U_n(\lambda))\right)\right]\\
        &\qquad\qquad\qquad\qquad\qquad\qquad\qquad\\
        =&2m\cdot\left[-i\cdot m_p+\sum_{n\in\mathbb{Z}}\left(\dfrac{i}{p}\cdot\dim(U_n(\lambda))-\dim(F\cap U_{n}(\lambda))\right)-\dfrac{i}{p}\cdot\sum_{n\geqslant m_p+1}\dim(U_n(\lambda))\right]\\
        =&2m\cdot\left[-i\cdot m_p+\sum_{n\in\mathbb{Z}}\left(\dfrac{i}{p}\cdot\dim(U_n(\lambda))-\dim(F\cap U_{n}(\lambda))\right)-\dfrac{i}{p}\cdot\sum_{k=1}^{p-1}\left(k\cdot(m_k-m_{k+1})\right)\right]\\
        =&\dfrac{2m}{p}\cdot\sum_{n\in\mathbb{Z}}\left[i\cdot\dim(U_n(\lambda))-p\cdot\dim(U_n(\lambda)\cap F)\right]-2mi\left(m_p+\dfrac{1}{p}\left(\sum_{k=1}^{p-1}m_k-(p-1)m_p\right)\right)\\
        =&\dfrac{2m}{p}\cdot\sum_{n\in\mathbb{Z}}\left[i\cdot\dim(U_n(\lambda))-p\cdot\dim(U_n(\lambda)\cap F)\right].
    \end{aligned}$$
    The last ``='' above holds due to $\displaystyle\sum_{k=1}^pm_k=0$.
\end{proof}

Recall that there are only two points in $\mathcal{IF}(\mathbb{C}^2)$, we fix one point $\bullet$, and denote $\mathcal{IF}(\mathbb{C}^q)$ by $\mathcal{F}$. 

For any $\mathbf{a}=(a^j)_{j=1}^s\in\mathbb{Z}^s$, $\mathbf{b}=(b_i^j)_{1\leqslant i\leqslant q-1,1\leqslant j\leqslant s}\in\mathbb{Z}^{(q-1)\times s}$, $\mathbf{F}=(F_i^j)_{j=1}^s\in\mathcal{F}^s$, we can define 
$$\mathcal{O}(2\mathbf{a},2\mathbf{b}):=\bigotimes_{j=1}^s\left((\pi_j)^*\mathcal{O}(2 a^j)\otimes(\pi_j^\prime)^*\mathcal{O}(2b_1^j,\dots,2b_{q-1}^j)\right)$$ on \[\begin{aligned}
    \mathcal{F}^s&\longleftrightarrow(\{\bullet\}\times\mathcal{F})^s\\
    \mathbf{F}&\longleftrightarrow(\bullet,\mathbf{F})
\end{aligned}\] via the embedding from it to $\left(\operatorname{Gr}_1(\mathbb{C}^2)\times\prod_{k=1}^{q-1}\operatorname{Gr}_k(\mathbb{C}^q)\right)^s$, and $(\pi_j,\pi_j^\prime)$ denotes the $j$-th projection from $\left(\operatorname{Gr}_1(\mathbb{C}^2)\times\prod_{k=1}^{q-1}\operatorname{Gr}_k(\mathbb{C}^q)\right)^s$ to $\operatorname{Gr}_1(\mathbb{C}^2)\times\prod_{k=1}^{q-1}\operatorname{Gr}_k(\mathbb{C}^q)$. Now choose $\xi=(\xi_i^j)_{1\leqslant i\leqslant 2,1\leqslant j\leqslant s}$ and $\zeta=(\zeta_k^j)_{1\leqslant k\leqslant q,1\leqslant j\leqslant s}$ such that $\xi_2^j-\xi_1^j=a^j,\quad \zeta_{k+1}^j-\zeta_k^j=b_k^j.$ We define some notations below.
$$\|\xi\|=\sum_{j=1}^s\sum_{i=1}^2\xi_i^j,\quad\|\zeta\|=\sum_{j=1}^s\sum_{i=1}^q\zeta_i^j,$$
$$|\xi(T\cap\bullet)|=\sum_{j=1}^s\sum_{i=1}^2\xi_i^j\left(\dim(T\cap\bullet_{i-1})-\dim(T\cap\bullet_i)\right),\mbox{ for any subspace $T\subset\mathbb{C}^2$,}$$
$$|\zeta(S\cap\mathbf{F})|=\sum_{j=1}^s\sum_{i=1}^q\zeta_i^j\left(\dim(S\cap F_{i-1}^j)-\dim(S\cap F_i^j)\right),\mbox{ for any subspace $S\subset\mathbb{C}^q$}.$$

\begin{proposition}\label{prop:flagHMweight}
    For $\mathbf{F}=(F_i^j)\in\mathcal{F}^s$ and a one parameter subgroup $\lambda=(\lambda_1,\lambda_2)\colon\mathbb{C}^*\to\SO(2,\mathbb{C})\times\SO(q,\mathbb{C})$ with the associated filtration $U_n(\lambda),V_n(\lambda)$ of $\mathbb{C}^2,\mathbb{C}^q$ respectively, we have
    $$\begin{aligned}
        &\mu_{\mathbf{a},\mathbf{b}}(\lambda,\mathbf{F})\\:=&\mu_{\mathcal{O}(2\mathbf{a},2\mathbf{b})}(\lambda,\mathbf{F})\\=&\sum_{n\in\mathbb{Z}}\left(-\|\xi\|\dim(U_n(\lambda))-2|\xi(U_n(\lambda)\cap\bullet)|-\dfrac{2}{q}\|\zeta\|\dim(V_n(\lambda))-2|\zeta(V_n(\lambda)\cap\mathbf{F})|\right).
    \end{aligned}$$
\end{proposition}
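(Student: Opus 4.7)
The plan is to combine the additivity of Hilbert--Mumford weights on products (\prettyref{fact:sumHM}) with the single-Grassmannian formula in \prettyref{lemma:calculate}, and then repackage the outcome via summation by parts. By construction, $\mathcal{O}(2\mathbf{a},2\mathbf{b})$ is a tensor product of pullbacks of line bundles from the factors of the embedding $\mathcal{F}^s\hookrightarrow(\operatorname{Gr}_1(\mathbb{C}^2)\times\prod_{k=1}^{q-1}\operatorname{Gr}_k(\mathbb{C}^q))^s$, and $\lambda=(\lambda_1,\lambda_2)$ acts diagonally, so \prettyref{fact:sumHM} gives
\[
\mu_{\mathbf{a},\mathbf{b}}(\lambda,\mathbf{F})=\sum_{j=1}^{s}\mu_{\mathcal{O}_1(2a^j)}(\lambda_1,\bullet_1)+\sum_{j=1}^{s}\sum_{k=1}^{q-1}\mu_{\mathcal{O}_k(2b_k^j)}(\lambda_2,F_k^j).
\]
\prettyref{lemma:calculate} then turns each summand into an explicit expression: with $p=2$, $i=1$, $m=a^j$ on the first factor and $p=q$, $i=k$, $m=b_k^j$ on the remaining factors, the $j$-th $\operatorname{Gr}_1(\mathbb{C}^2)$-summand equals $a^j\sum_n[\dim U_n(\lambda)-2\dim(U_n(\lambda)\cap\bullet_1)]$ and the $(j,k)$-th $\operatorname{Gr}_k(\mathbb{C}^q)$-summand equals $\tfrac{2b_k^j}{q}\sum_n[k\dim V_n(\lambda)-q\dim(V_n(\lambda)\cap F_k^j)]$.

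Next I would substitute $a^j=\xi_2^j-\xi_1^j$ and $b_k^j=\zeta_{k+1}^j-\zeta_k^j$ and perform summation by parts inside each $\sum_n$. For the $\bullet$-contribution, adopting the conventions $\bullet_0=0$, $\bullet_2=\mathbb{C}^2$ and writing $d_i:=\dim(U_n(\lambda)\cap\bullet_i)$, a direct telescoping rewrite yields
\[
\sum_j(\xi_2^j-\xi_1^j)[d_2-2d_1]=-\|\xi\|d_2-2|\xi(U_n(\lambda)\cap\bullet)|,
\]
where the boundary term $(\sum_j\xi_2^j)d_2$ arising from the expansion of $\sum_j a^j d_1$ is absorbed into $-\|\xi\|d_2$ via $\sum_j(\xi_2^j-\xi_1^j)-2\sum_j\xi_2^j=-\|\xi\|$. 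For the $\mathbf{F}$-contribution, two Abel summations---one with $f(k)=k$ and one with $f(k)=\dim(V_n(\lambda)\cap F_k^j)$, together with the boundary values $F_0^j=0$ and $F_q^j=\mathbb{C}^q$---yield
\[
\sum_{j,k}b_k^j\, k=-\|\zeta\|+q\sum_j\zeta_q^j,\qquad \sum_{j,k}b_k^j\dim(V_n(\lambda)\cap F_k^j)=|\zeta(V_n(\lambda)\cap\mathbf{F})|+\Bigl(\sum_j\zeta_q^j\Bigr)\dim V_n(\lambda),
\]
so plugging these into $\tfrac{2}{q}[\dim V_n(\lambda)\cdot(\sum_{j,k}b_k^j k)-q\cdot(\sum_{j,k}b_k^j\dim(V_n(\lambda)\cap F_k^j))]$ causes the two $\sum_j\zeta_q^j$ contributions to cancel, leaving $-\tfrac{2}{q}\|\zeta\|\dim V_n(\lambda)-2|\zeta(V_n(\lambda)\cap\mathbf{F})|$.

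Summing the two contributions over $n\in\mathbb{Z}$ then assembles precisely the formula stated in the proposition. The only real subtlety is the careful bookkeeping that forces the $\sum_j\xi_2^j$ and $\sum_j\zeta_q^j$ boundary terms to cancel; I expect this clerical step to be the main, though mild, obstacle. The cancellation also serves as a built-in consistency check, since the right-hand side of the proposition must depend only on the prescribed differences $a^j$ and $b_k^j$---equivalently, must be invariant under global shifts $\xi_i^j\mapsto\xi_i^j+c^j$ and $\zeta_k^j\mapsto\zeta_k^j+c^j$---and these boundary contributions are precisely the obstructions to such invariance.
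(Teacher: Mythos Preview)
Your proposal is correct and follows essentially the same route as the paper: apply \prettyref{fact:sumHM} and \prettyref{lemma:calculate} to decompose $\mu_{\mathcal{O}(2\mathbf{a},2\mathbf{b})}(\lambda,\mathbf{F})$ into Grassmannian contributions, substitute $a^j=\xi_2^j-\xi_1^j$ and $b_k^j=\zeta_{k+1}^j-\zeta_k^j$, and telescope, with the boundary terms $\sum_j\zeta_q^j$ cancelling because $F_q^j=\mathbb{C}^q$. Your observation that the cancellation is forced by invariance under the shifts $\xi_i^j\mapsto\xi_i^j+c^j$, $\zeta_k^j\mapsto\zeta_k^j+c^j$ is a nice sanity check not made explicit in the paper.
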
 

\begin{proof}
    This is a direct calculation by using \prettyref{lemma:calculate} and \cite[Proposition 3.7]{tholozan2021compact}. 
    $$\begin{aligned}
        &\mu_{\mathbf{a},\mathbf{b}}(\lambda,\mathbf{F})\\
        =&\sum_{n\in\mathbb{Z}}\sum_{j=1}^s\left(a^j\dim(U_n(\lambda))-2a^j\cdot\dim(U_n(\lambda)\cap\bullet_1)\right)\\&+\sum_{n\in\mathbb{Z}}\sum_{j=1}^s\sum_{i=1}^{q-1}\left(\dfrac{2}{q}\cdot ib_i^j\cdot \dim(V_n(\lambda))-2b_i^j\cdot\dim(V_n(\lambda)\cap F_i^j)\right)\\
        =&\sum_{n\in\mathbb{Z}}\sum_{j=1}^s\left((\xi_2^j-\xi_1^j)\dim(U_n(\lambda))-2(\xi_2^j-\xi_1^j)\cdot\dim(U_n(\lambda)\cap\bullet_1)\right)\\&+\sum_{n\in\mathbb{Z}}\sum_{j=1}^s\sum_{i=1}^{q-1}\left(\dfrac{2}{q}\cdot i(\zeta_{i+1}^j-\zeta_i^j)\cdot \dim(V_n(\lambda))-2(\zeta_{i+1}^j-\zeta_i^j)\cdot\dim(V_n(\lambda)\cap F_i^j)\right).
    \end{aligned}$$

    Since
    $$\begin{aligned}
    &\sum_{j=1}^s\sum_{i=1}^{q-1}\left(\dfrac{2}{q}\cdot i(\zeta_{i+1}^j-\zeta_i^j)\cdot \dim(V_n(\lambda))\right)\\
    =&\dfrac{2}{q}\dim(V_n(\lambda))\cdot\sum_{j=1}^s\left(\sum_{i=2}^q(i-1)\zeta_i^j-\sum_{i=1}^{q-1}i\zeta_i^j\right)\\
    =&\dfrac{2}{q}\dim(V_n(\lambda))\cdot\sum_{j=1}^s\left(q\zeta_{q}^j-\sum_{i=1}^{q}\zeta_i^j\right)\\
    =&2\dim(V_n(\lambda))\cdot\sum_{j=1}^s\zeta_{q}^j-\dfrac{2}{q}\|\zeta\|\dim(V_n(\lambda))
    \end{aligned}$$
    and
    $$\begin{aligned}
        &\sum_{j=1}^s\sum_{i=1}^{q-1}\left(-2(\zeta_{i+1}^j-\zeta_i^j)\cdot\dim(V_n(\lambda)\cap F_i^j)\right)\\
        =&-2\sum_{j=1}^s\left(\sum_{i=2}^q\zeta_i^j\cdot\dim(V_n(\lambda)\cap F_{i-1}^j)-\sum_{i=1}^{q-1}\zeta_i^j\cdot\dim(V_n(\lambda)\cap F_{i}^j)\right)\\
        =&-2|\zeta(V_n(\lambda)\cap\mathbf{F})|-2\dim(V_n(\lambda)\cap F_q^j)\cdot\sum_{j=1}^s\zeta_q^j,
    \end{aligned}$$
    we have
    $$\begin{aligned}
        &\sum_{j=1}^s\sum_{i=1}^{q-1}\left(\dfrac{2}{q}\cdot i(\zeta_{i+1}^j-\zeta_i^j)\cdot \dim(V_n(\lambda))-2(\zeta_{i+1}^j-\zeta_i^j)\cdot\dim(V_n(\lambda)\cap F_i^j)\right)\\
        =&-\dfrac{2}{q}\|\zeta\|\dim(V_n(\lambda))-2|\zeta(V_n(\lambda)\cap\mathbf{F})|\quad(\mbox{recall that }F_q^j=\mathbb{C}^q).
    \end{aligned}$$
    Similarly, we also have
    $$\begin{aligned}
        &\sum_{j=1}^s\left(a^j\dim(U_n(\lambda))-2a^j\cdot\dim(U_n(\lambda)\cap\bullet_1)\right)\\
        =&-\|\xi\|\dim(U_n(\lambda))-2|\xi(U_n(\lambda)\cap\bullet)|.
    \end{aligned}$$
    Now by taking summation along $n\in\mathbb{Z}$, we complete this proof.
\end{proof}

Suppose $\mathbb{C}^2=U\oplus U'$, where $U,U'$ are two isotropic subspaces of $\mathbb{C}^2$ and $\iota_1(\bullet)=U$. Below we identify $A\in\mathbb{C}^{1\times q}$ with an $f_A\in\operatorname{Hom}(\mathbb{C}^q,\mathbb{C}^2)$ as follows: first view $A$ as the matrix of a linear transformation under the standard basis of $U^\prime$ and $\mathbb{C}^q$ and then compose it with the embedding $U^\prime\hookrightarrow\mathbb{C}^2$. Now through the standard inner product, we obtain its dual map $f_A^{\vee}\in\operatorname{Hom}(\mathbb{C}^2,\mathbb{C}^q)$. Since $f_A$ lies in $\operatorname{Hom}(\mathbb{C}^q,U^\prime)$, $f_A^\vee$ is an element of $\operatorname{Hom}(U,\mathbb{C}^q)$ actually.

Now we consider the action
$$\begin{aligned}
\left(\SO(2,\mathbb{C})\times\SO(q,\mathbb{C})\right)\times\left(\mathbb{C}^{1\times q}\right)^{r}&\longrightarrow\left(\mathbb{C}^{1\times q}\right)^{r}\\
((g_1,g_2),(A_j)_{j=1}^{r})&\longmapsto (g_1\circ A_j\circ g_2^{-1})_{j=1}^{r}
\end{aligned}$$
With the induced trivial action of $\SO(2,\mathbb{C})\times\SO(q,\mathbb{C})$ on the trivial line bundle $\mathcal{O}$ over $\left(\mathbb{C}^{1\times q}\right)^{r}$, one can easily get
\begin{lemma}\label{lemma:baseHMweight}
    For $\mathbf{A}=(A_j)\in\left(\mathbb{C}^{1\times q}\right)^{r}$ and a one parameter subgroup $\lambda=(\lambda_1,\lambda_2)\colon\mathbb{C}^*\to\SO(2,\mathbb{C})\times\SO(q,\mathbb{C})$ with the associated filtration $U_n(\lambda),V_n(\lambda)$ of $\mathbb{C}^2,\mathbb{C}^q$ respectively, we have $\mu_{\mathcal{O}}(\lambda,\mathbf{A})=+\infty$ unless for any $1\leqslant j\leqslant r$ and $n\in\mathbb{Z}$, $f_{A_j}^\vee(U_n(\lambda))\subset V_n(\lambda)$, and in this case, $\mu_{\mathcal{O}}(\lambda,\mathbf{A})=0$.
\end{lemma}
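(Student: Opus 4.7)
The plan is to reduce the lemma to analyzing when the limit $\lim_{t\to 0}\lambda(t)\cdot\mathbf{A}$ exists in $(\mathbb{C}^{1\times q})^{r}$, and then carry out that analysis by a weight-space decomposition. The key initial observation is that $\mathcal{O}$ carries the \emph{trivial} $\SO(2,\mathbb{C})\times\SO(q,\mathbb{C})$-linearization, so $\lambda(\mathbb{C}^{*})$ acts as the identity on every fibre of $\mathcal{O}$. By the very definition of the Hilbert--Mumford weight, this forces $\mu_{\mathcal{O}}(\lambda,\mathbf{A})=0$ whenever the limit exists and $\mu_{\mathcal{O}}(\lambda,\mathbf{A})=+\infty$ otherwise, so the entire content of the lemma reduces to identifying when this limit exists.

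For the second step, I would fix isotropic weight-bases of $\mathbb{C}^{2}$ and $\mathbb{C}^{q}$ that simultaneously diagonalize the infinitesimal generators $u_{1},u_{2}$ of $\lambda_{1},\lambda_{2}$, so that $\lambda_{i}(t)$ acts by $t^{n}$ on its weight-$n$ eigenspace. Since the action is componentwise and on each factor is $\lambda_{1}(t)\circ f_{A_{j}}\circ\lambda_{2}(t)^{-1}$, the matrix entry of $f_{A_{j}}$ sending a weight-$n$ eigenvector of $u_{2}$ to a weight-$m$ eigenvector of $u_{1}$ is rescaled by $t^{m-n}$. The limit as $t\to 0$ therefore exists if and only if every nonzero entry satisfies $m\geqslant n$, which is precisely the condition $f_{A_{j}}(V_{n}(\lambda))\subset U_{n}(\lambda)$ for every $j$ and every $n\in\mathbb{Z}$.

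Finally, I would translate this into the advertised condition on $f_{A_{j}}^{\vee}$ using the perpendicular identities $U_{n}(\lambda)^{\perp}=U_{1-n}(\lambda)$ and $V_{n}(\lambda)^{\perp}=V_{1-n}(\lambda)$ recorded just before the lemma, together with the adjoint relation $\langle f_{A_{j}}(v),w\rangle=\langle v,f_{A_{j}}^{\vee}(w)\rangle$. Dualizing the containment $f_{A_{j}}(V_{n})\subset U_{n}$ yields $f_{A_{j}}^{\vee}(U_{1-n})\subset V_{1-n}$, and reindexing by $m=1-n$ gives the stated condition for all $m\in\mathbb{Z}$. The only mildly technical point, which I expect to be the main (small) obstacle, is verifying that the standard inner product and the $\SO$-invariant symmetric forms $Q$, $Q_{\mathcal{V}}$ induce the same perpendicular operation on the isotropic subspaces $U_{n}(\lambda)$ and $V_{n}(\lambda)$; this is immediate from the explicit change of basis $\mathcal{B}\to\mathcal{B}'$ used in \prettyref{sec:transtovb}, so the duality step goes through without issue and the lemma follows.
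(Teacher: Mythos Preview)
Your proposal is correct and follows essentially the same route as the paper: both reduce the Hilbert--Mumford weight computation to the question of whether $\lim_{t\to 0}\lambda(t)\cdot\mathbf{A}$ exists (using that the linearization on $\mathcal{O}$ is trivial), and both settle that question by a weight-space analysis after diagonalizing $\lambda_1,\lambda_2$. The only difference is packaging: the paper writes out $f_A$ and $f_A^{\vee}$ explicitly in the isotropic eigenbasis and checks the containment $f_A^{\vee}(U_n)\subset V_n$ by hand, whereas you first obtain the equivalent condition $f_{A_j}(V_n)\subset U_n$ and then pass to $f_{A_j}^{\vee}$ by duality---a cleaner step, and your worry about two different bilinear forms is in fact moot here since the ``standard inner product'' used to define $f_A^{\vee}$ \emph{is} the $\SO$-invariant form on $\mathbb{C}^2$ and $\mathbb{C}^q$.
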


\begin{proof}
    By definition, 
    \[\mu_{\mathcal{O}}(\lambda,\mathbf{A})=\begin{cases}
        0& \mbox{when }\lim_{t\to 0}\lambda(t)\cdot\mathbf{A}\mbox{ exists}\\
        +\infty& \mbox{when }\lim_{t\to 0}\lambda(t)\cdot\mathbf{A}\mbox{ does not exist}\\
    \end{cases}\]
    and $\lim_{t\to 0}\lambda(t)\cdot\mathbf{A}$ exists if and only if $\lim_{t\to 0}\lambda(t)\cdot A_j$ exists for all $j$. Therefore it suffices to prove for $j=1$.

    Now suppose $\lambda=(\lambda_1,\lambda_2)\colon\mathbb{C}^*\to\SO(2,\mathbb{C})\times\SO(q,\mathbb{C})$. We take $u\in U$ and $u^\prime\in U^\prime$ such that under this basis the matrix of the standard bilinear form of $\mathbb{C}^2$ is $\begin{pmatrix}
        0&1\\1&0
    \end{pmatrix}.$
    Suppose under the above basis, $u,u^\prime$, of $\mathbb{C}^2$,
    $\lambda_1(\exp(t))=\exp(t\cdot\operatorname{diag}(l,-l))$.
    Here we do not need $l>0$, i.e. this diagonalization may not define the filtration $U_n(\lambda)$. 
    
    We also let \[\lambda_2(\exp(t))=\exp(t\cdot\operatorname{diag}(m_1,m_2,\dots,m_q))\]
    be the diagonalization defined $V_n(\lambda)$, i.e. $m_1\geqslant m_2\geqslant\cdots\geqslant m_q$, $m_i+m_{q+1-i}=0$ with corresponding isotropic basis $v_1,\dots,v_q$. We fix an $A\in\mathbb{C}^{1\times q}$. Suppose under the basis $u,u^\prime,v_1,\dots,v_q$, $f_A\in\operatorname{Hom}(\mathbb{C}^q,U^\prime)$ is presented as 
    \[\begin{pmatrix}
        0&\cdots&0\\a_1&\cdots&a_q
    \end{pmatrix}\]
    
    So $\lambda(t)\cdot A$ is considered as $\lambda(t)\cdot f_A$ and
    \[\lambda(t)\cdot f_A=\begin{pmatrix}
        0&\cdots&0\\\exp(-t(l+m_1))\cdot a_1&\cdots&\exp(-t(l+m_q))\cdot a_q
    \end{pmatrix}.\]
    Thus $\lim_{t\to 0}\lambda(t)\cdot A=\lim_{t\to -\infty}\lambda(\exp(t))\cdot A=\lim_{t\to -\infty}\lambda(\exp(t))\cdot f_A$ exists if and only if for any $l>-m_i$, $a_i=0$. Or equivalently, for any $l>m_i$, $a_{q+1-i}=0$. Now under  the basis $u,u^\prime,v_1,\dots,v_q$, $f_A^{\vee}\in\operatorname{Hom}(U,\mathbb{C}^q)$ is presented as 
    \[\begin{pmatrix}
        a_q&0\\\vdots&0\\a_1&0
    \end{pmatrix}\]
    
    If $\lim_{t\to 0}\lambda(t)\cdot A$ exists, then for any $l>m_i$, $a_{q+1-i}=0$. When $n>l$, we have $f_A^\vee(U_n(\lambda))\subset f_A^\vee(U^\prime)=0\subset V_n(\lambda)$. When $n\leqslant l$, we have $m_i<l$ for any $m_i<n$. Hence $a_{q+1-i}=0$ for any $m_i<n$. Therefore \[f_A^\vee(U_n(\lambda))=f_A^\vee(U)=\mathbb{C}\cdot\sum_{i=1}^q a_{q+1-i}v_i=\mathbb{C}\cdot\sum_{m_i\geqslant n} a_{q+1-i}v_i\subset V_n(\lambda).\]

    Conversely, if $f_A^\vee(U_n(\lambda))\subset V_n(\lambda)$ for any $n\in\mathbb{Z}$. Then by taking $n=l$ we obtain that $a_{q+1-i}=0$ for any $m_i<l$, which completes the proof.
\end{proof}

\begin{remark}\label{rem:differentGITbasis}
    Similarly to \prettyref{rem:differentGIT}, the elements in $\SO(p,\mathbb{C})$ have determinant $1$ help us get rid of dimension terms. One may compare the formula of $\mu(\lambda,\mathbf{A})$ in \prettyref{lemma:baseHMweight} with that in \cite[Proposition 3.12]{tholozan2021compact}.
\end{remark}

Consider the space $E(q,r,s)=\left(\mathbb{C}^{1\times q}\right)^{r}\times\mathcal{F}^s$ with the line bundle induced from $\mathcal{O}(2\mathbf{a},2\mathbf{b})$, we still denote it by $\mathcal{O}(2\mathbf{a},2\mathbf{b})$. Denote the GIT quotient 
$$(E(q,r,s),\mathcal{O}(2\mathbf{a},2\mathbf{b}))\sslash\left(\SO(2,\mathbb{C})\times\SO(q,\mathbb{C})\right)$$ by $\mathcal{R}(q,r,s,\mathbf{a},\mathbf{b})$.

\begin{theorem}\label{thm:isomorphic}
    For an $\SO_0(2,q)$-weight $(\alpha,\beta)$ satisfying (W1)-(W3), there exists $\mathbf{a},\mathbf{b}$ such that $\mathcal{M}(\alpha,\beta,-1)$ is isomorphic to $\mathcal{R}(q,s-2,s,\mathbf{a},\mathbf{b})$.
\end{theorem}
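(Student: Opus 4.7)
The plan is to combine the linear-algebraic parameterization of \prettyref{sec:linearalgebraic} with the Hilbert--Mumford criterion to identify the two moduli spaces. By \prettyref{prop:underbundle} and the proof of \prettyref{coro:cpt}, every point of $\mathcal{M}(\alpha,\beta,-1)$ is represented by a parabolic $\SO_0(2,q)$-Higgs bundle with underlying bundle $\mathcal{O}(1)\oplus\mathcal{O}(-1)\oplus\mathcal{O}^{\oplus q}$ and vanishing $\eta$-block, and is thus determined by a pair $(\gamma,\mathbf{F})$ where $\gamma\in\mathrm{H}^0(X,\operatorname{Hom}(\mathcal{O}^{\oplus q},\mathcal{O}(-1))\otimes\KK(D))$ and $\mathbf{F}=(F_i^j)\in\mathcal{F}^s$ is the tuple of isotropic flags at the marked points. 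Via the basis of $\mathrm{H}^0(X,\mathcal{O}(s-3))$ fixed in \prettyref{sec:linearalgebraic}, the pair $(\gamma,\mathbf{F})$ corresponds bijectively to $(\mathbf{A},\mathbf{F})\in E(q,s-2,s)$. Two such points yield isomorphic Higgs bundles iff they lie in the same orbit under the fibrewise action of $\SO(2,\mathbb{C})\times\SO(q,\mathbb{C})$: since $\mathcal{O}(1)\oplus\mathcal{O}(-1)$ and $\mathcal{O}^{\oplus q}$ are rigid and the only global automorphisms preserving the hyperbolic form and $Q_{\VV}$ are the fibrewise ones, the full gauge group coincides with this constant action.

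Next I specify the linearization. Choose a positive integer $N$ and integer tuples $\xi=(\xi_i^j)$, $\zeta=(\zeta_k^j)$ approximating
\[
\xi_1^j = -N\alpha^j,\quad \xi_2^j = N\alpha^j,\quad \zeta_k^j = N\beta_k^j,
\]
arranged so that $\|\xi\|=\|\zeta\|=0$; this is automatic by the antisymmetries $\xi_1^j+\xi_2^j=0$ and $\beta_i^j+\beta_{q+1-i}^j=0$ for rational $(\alpha,\beta)$, and the case of irrational weights is handled by density since both (semi)stability notions are open in the weights. Setting $a^j:=\xi_2^j-\xi_1^j$ and $b_k^j:=\zeta_{k+1}^j-\zeta_k^j$ produces the $\SO(2,\mathbb{C})\times\SO(q,\mathbb{C})$-linearized ample line bundle $\mathcal{O}(2\mathbf{a},2\mathbf{b})$ on $E(q,s-2,s)$ (extended trivially on the affine factor $(\mathbb{C}^{1\times q})^{s-2}$).

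The key step is to check that GIT-(semi)stability with this linearization agrees with the linear-algebraic (semi)stability of \prettyref{thm:interpretion}. By \prettyref{fact:HM} and \prettyref{fact:sumHM}, $(\mathbf{A},\mathbf{F})$ is GIT-semistable iff for every one-parameter subgroup $\lambda=(\lambda_1,\lambda_2)\colon\mathbb{C}^\ast\to\SO(2,\mathbb{C})\times\SO(q,\mathbb{C})$ one has
\[
\mu_{\mathcal{O}}(\lambda,\mathbf{A})+\mu_{\mathbf{a},\mathbf{b}}(\lambda,\mathbf{F})\geqslant0.
\]
By \prettyref{lemma:baseHMweight}, the first term is $+\infty$ unless $f_{A_j}^{\vee}(U_n(\lambda))\subset V_n(\lambda)$ for all $n,j$, which says precisely that whenever $U_n(\lambda)$ contains the isotropic line $U$, then $V_n(\lambda)$ contains every $A_j^{\mathrm{t}}$. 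When the finiteness holds the first term vanishes, and \prettyref{prop:flagHMweight} together with $\|\xi\|=\|\zeta\|=0$ reduces the second term to a positive multiple of $-\sum_n\bigl(|\xi(U_n(\lambda)\cap\bullet)|+|\zeta(V_n(\lambda)\cap\mathbf{F})|\bigr)$, which I identify directly with the sum of parabolic degrees of the coisotropic subspaces appearing in the filtration, invoking \prettyref{lemma:pardegoforthocomplement} to pair isotropic and coisotropic pieces symmetrically. The non-negativity over all admissible $\lambda$ is then equivalent to: no isotropic $V'\subset\mathbb{C}^q$ contains every $A_j^{\mathrm{t}}$ (else a 1-PS with nontrivial $\lambda_1$ and $V_1(\lambda_2)=V'$ produces strictly negative weight), and every coisotropic $V'\supset\{A_j^{\mathrm{t}}\}$ satisfies $\operatorname{pardeg}(V')\leqslant0$; this is exactly the semistability of $(\mathbf{A},\mathbf{F})$, with the strict analogue corresponding to stability.

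The main obstacle lies in this Hilbert--Mumford translation: one must carefully split the filtration $V_n(\lambda)$ into its isotropic ($n\geqslant1$) and coisotropic ($n\leqslant0$) pieces, pair them via $V_n(\lambda)=V_{1-n}(\lambda)^{\perp}$, and thread the interaction with the $\lambda_1$-filtration of $\mathbb{C}^2$ so that the finiteness condition on $\mathbf{A}$ is correctly matched to the semistability constraint on $\{A_j^{\mathrm{t}}\}$. After this identification, both $\mathcal{M}(\alpha,\beta,-1)$ and $\mathcal{R}(q,s-2,s,\mathbf{a},\mathbf{b})$ represent the GIT quotient
\[
E(q,s-2,s)^{ss}(\mathcal{O}(2\mathbf{a},2\mathbf{b}))\sslash(\SO(2,\mathbb{C})\times\SO(q,\mathbb{C})),
\]
and the orbit map provides the desired isomorphism of projective varieties.
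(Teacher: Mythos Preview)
Your approach is essentially the same as the paper's: reduce to rational weights by openness, set up the linearization via integral $\xi,\zeta$, and use \prettyref{prop:flagHMweight} together with \prettyref{lemma:baseHMweight} and the Hilbert--Mumford criterion to match GIT-(semi)stability with the linear-algebraic (semi)stability of \prettyref{thm:interpretion}. The paper carries this out by explicit case-by-case construction of destabilizing one-parameter subgroups in each direction, whereas you summarize the translation more abstractly; the substance is the same.

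One genuine slip: your choice $\zeta_k^j=N\beta_k^j$ gives $b_k^j=N(\beta_{k+1}^j-\beta_k^j)\leqslant0$ since $\beta^j$ is non-increasing, so $\mathcal{O}(2\mathbf{a},2\mathbf{b})$ is not ample and the GIT framework does not apply. The paper takes $\zeta=-N\beta$, yielding $b_i^j=N(\beta_i^j-\beta_{i+1}^j)\geqslant0$; this also makes the identity $|\zeta(V'\cap\mathbf{F})|=N\operatorname{pardeg}(V')$ come out with the correct sign. With this sign corrected, your outline goes through.
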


\begin{proof}
    Since the stability condition is open, we can choose an $\SO_0(2,q)$-weight $(\alpha^\prime,\beta^\prime)$ near $(\alpha,\beta)$ such that $(\alpha^\prime)^j,(\beta^\prime)_i^j$ are all rational and $\mathcal{M}(\alpha^\prime,\beta^\prime,-1)\cong\mathcal{M}(\alpha,\beta,-1)$. Therefore, without loss of generality, we can assume that $\alpha^j,\beta_i^j$ are all rational. 

    Let $N$ be a positive integer such that $N\alpha^j,N\beta_i^j$ are all integer. Then define \[\xi=-N\alpha, \zeta=-N\beta,\] \[a^j=2N\alpha^j, b_i^j=N\left(\beta_{i}^j-\beta_{i+1}^j\right),\] \[\mathbf{a}=(a^j),\mathbf{b}=(b_i^j).\] Note that $\|\xi\|=\|\zeta\|=0$, $|\xi(U\cap \bullet)|=N|\alpha|$, $|\xi(U'\cap \bullet)|=-N|\alpha|$, $|\xi(\mathbb{C}^2\cap \bullet)|=0$, $|\zeta(V'\cap\mathbf{F})|=N\operatorname{pardeg}(V')$. Let $\lambda=(\lambda_1,\lambda_2)\colon\mathbb{C}^*\to\SO(2,\mathbb{C})\times\SO(q,\mathbb{C})$ be a one parameter subgroup with the associated filtration $U_n(\lambda),V_n(\lambda)$ of $\mathbb{C}^2,\mathbb{C}^q$ respectively. By definition, under the standard basis of $\mathbb{C}^q$, $f_A^\vee(U)=f_A^\vee(\mathbb{C}^2)$ is the subspace spanned by $A^{\mathrm{t}}$ in $\mathbb{C}^q$ for an arbitrary $A\in\mathbb{C}^{1\times q}$.

    If the $(\mathbf{A},\mathbf{F})\in E(q,s-2,s)$ is not semistable, i.e. does not correspond to a semistable parabolic $\SO_0(2,q)$-Higgs bundle in $\mathcal{M}(\alpha,\beta)$ (see the discussion at the beginning of \prettyref{sec:linearalgebraic} and also see the equivalence of semistability in \prettyref{thm:interpretion}), there are two possible cases.
    \begin{itemize}
        \item [(1)] There exists an isotropic subspace $V'$ such that $A_j^\mathrm{t}\in V'$, then consider the following filtration (note that an isotropic filtration corresponds to a unique one parameter subgroup $\lambda$):
        \[U_n(\lambda)=\begin{cases}
            \mathbb{C}^2& n\leqslant -1,\\
            U& n=0,1,\\
            0& n\geqslant2,
        \end{cases}\quad V_n(\lambda)=\begin{cases}
            \mathbb{C}^q& n\leqslant -1,\\
            (V')^{\perp}& n=0,\\
            V'& n=1,\\
            0& n\geqslant2.
        \end{cases}\]
        This filtration satisfies that $f_{A_j}^{\vee}(U_n(\lambda))\subset V_n(\lambda)$ for any $n$ and $j$. Hence, 
        \[\begin{aligned}
            &\mu_{\mathbf{a},\mathbf{b}}(\lambda,(\mathbf{A},\mathbf{F}))\\
            =&\mu_\mathcal{O}(\lambda,\mathbf{A})+\mu_{\mathcal{O}(2\mathbf{a},2\mathbf{b})}(\lambda,\mathbf{F})\quad(\mbox{by \cite[Proposition 3.7]{tholozan2021compact}})\\
            =&\mu_{\mathcal{O}(2\mathbf{a},2\mathbf{b})}(\lambda,\mathbf{F})\quad(\mbox{by \prettyref{lemma:baseHMweight}})\\
            =&-4N(|\alpha|+\operatorname{pardeg}(V'))\quad(\mbox{by \prettyref{prop:flagHMweight}})\\
            \leqslant&-4N(|\alpha|-|\beta|)<0
        \end{aligned}\]
        by (W2), which shows that $(\mathbf{A},\mathbf{F})$ is not GIT-semistable by Hilbert--Mumford criterion, c.f. \cite[Theorem 3.6]{tholozan2021compact}.

        \item[(2)] There exists a coisotropic subspace $V'\subset\mathbb{C}^q$ such that $\operatorname{pardeg}(V')>0$ and $A_j^{\mathrm{t}}\in V'$. Now construct the following isotropic filtration 
        $$U_n(\lambda)=\begin{cases}
            \mathbb{C}^2& n\leqslant 0,\\
            0& n\geqslant1,
        \end{cases}\quad V_n(\lambda)=\begin{cases}
            \mathbb{C}^q& n\leqslant -1,\\
            V'& n=0,\\
            (V')^{\perp}& n=1,\\
            0& n\geqslant2.
        \end{cases}$$
        This filtration satisfies that $f_{A_j}^{\vee}(U_n(\lambda))\subset V_n(\lambda)$ for any $n$ and $j$. Then by \prettyref{prop:flagHMweight}, \prettyref{lemma:baseHMweight} and \cite[Proposition 3.7]{tholozan2021compact} again, we obtain that
        \[\mu_{\mathbf{a},\mathbf{b}}(\lambda,(\mathbf{A},\mathbf{F}))=-4N(\operatorname{pardeg}(V'))<0,\] which shows that $(\mathbf{A},\mathbf{F})$ is not GIT-semistable by Hilbert--Mumford criterion.
    \end{itemize}

    If $(\mathbf{A},\mathbf{F})$ is not GIT-semistable, then there exists a one parameter subgroup $\lambda$ such that $\mu_{\mathbf{a},\mathbf{b}}(\lambda,(\mathbf{A},\mathbf{F}))<0$ by Hilbert--Mumford criterion. So we must have $f_{A_j}^{\vee}(U_n(\lambda))\subset V_n(\lambda)$ for any $j$ and $n$. There are two possible cases by discussing the $1$-dimensional term in $U_n(\lambda)$.
    \begin{itemize}
        \item[(1)] There exists no $n$ such that $U_n(\lambda)=U$. Then \[-\dfrac{\mu_{\mathbf{a},\mathbf{b}}(\lambda,(\mathbf{A},\mathbf{F}))}{2}=\sum_{n\in\mathbb{Z}}|\xi(U_n(\lambda)\cap\bullet)|+N\operatorname{pardeg}(V_n(\lambda))>0.\] Suppose $\#\{n|U_n(\lambda)=U'\}=m$, then $$\begin{aligned}
            0&<-m|\alpha|+\sum_{n\in\mathbb{Z}}\operatorname{pardeg}(V_n(\lambda))\\
            &\leqslant -m|\alpha|+m|\beta|+\sum_{\{n|\dim U_n(\lambda)\neq1\}}\operatorname{pardeg}(V_n(\lambda))\\
            &\leqslant\sum_{\{n|\dim U_n(\lambda)\neq1\}}\operatorname{pardeg}(V_n(\lambda))
        \end{aligned}$$by (W2), which shows that there is a coisotropic subspace $V'$, i.e. a $V_{n}(\lambda)$ for $n\leqslant0$, such that $\operatorname{pardeg}(V')>0$ and $A_j^{\mathrm{t}}\in f_{A_j}^{\vee}(\mathbb{C}^2)\subset V'$. Hence $(\mathbf{A},\mathbf{F})\in E(q,s-2,s)$ is not semistable.

        \item[(2)] There exists some $n$ such that $U_n(\lambda)=U$. By the definition of isotropic filtration, there exists $n\geqslant 1$ such that $U_n(\lambda)=U$. Then the corresponding $V_n(\lambda)$ is an isotropic subspace of $\mathbb{C}^q$, which shows that $(\mathbf{A},\mathbf{F})\in E(q,s-2,s)$ is not semistable.
    \end{itemize}

    Therefore, we get a surjective morphism $$\varphi\colon E(q,s-2,s)^{\mathrm{ss}}(\mathcal{O}(2\mathbf{a},2\mathbf{b}))\longrightarrow\mathcal{M}(\alpha,\beta,-1),$$
    where the superscript $\mathrm{ss}$ denotes semistable points. And note that two points in $E(q,s-2,s)$ map to isomorphic parabolic $\SO_0(2,q)$-Higgs bundles if and only if they are in the same $\SO(2,\mathbb{C})\times\SO(q,\mathbb{C})$-orbit, hence $\varphi$ descends to an isomorphism $\tilde{\varphi}\colon\mathcal{R}(q,s-2,s,\mathbf{a},\mathbf{b})\to\mathcal{M}(\alpha,\beta,-1)$.
\end{proof}

\subsection{Proof of \prettyref{thm:main2}}

Now \prettyref{thm:main2} is a corollary of all the discussions in \prettyref{sec:cc}. We recall its statement first.

\main*

\begin{proof}
    The compactness follows from \prettyref{coro:cpt}. The existence of stable point when $s\geqslant q+2$ follows from \prettyref{coro:stablepoint}. To show that $\mathcal{M}(\alpha,\beta)=\mathcal{M}(\alpha,\beta,-1)$ (by \prettyref{prop:underbundle}) is a projective variety over $\mathbb{C}$, by definition of the GIT quotient (see \cite[Definition 3.2]{tholozan2021compact}) that $\mathcal{R}(q,r,s,\mathbf{a},\mathbf{b})$ is a projective variety over 
$$\operatorname{Spec}\left(\mathrm{H}^0\left(E(q,r,s),\mathcal{O}\right)^{\SO(2,\mathbb{C})\times\SO(q,\mathbb{C})}\right)$$
for any $\mathbf{a},\mathbf{b}$. Hence by \cite[\href{https://stacks.math.columbia.edu/tag/0C4M}{Lemma 0C4M}]{stacks-project}, $\mathcal{R}(q,r,s,\mathbf{a},\mathbf{b})$ is a quasi-projective variety over $\operatorname{Spec}(\mathbb{C})$. By \prettyref{thm:isomorphic}, we can choose suitable $\mathbf{a},\mathbf{b}$ such that $\mathcal{M}(\alpha,\beta,-1)=\mathcal{M}(\alpha,\beta)$ is isomorphic to $\mathcal{R}(q,s-2,s,\mathbf{a},\mathbf{b})$. By \prettyref{coro:cpt}, $\mathcal{M}(\alpha,\beta,-1)$ is compact (under the complex analytic topology over its complex points), or equivalently (\cite[\uppercase\expandafter{\romannumeral12} Proposition 3.2]{SGA1}), a complete variety, i.e. a variety with proper structure map $\mathcal{M}(\alpha,\beta,-1)\to\operatorname{Spec}(\mathbb{C})$. Note that a morphism is projective if and only if it is both quasi-projective and proper (\cite[\href{https://stacks.math.columbia.edu/tag/0BCL}{Lemma 0BCL}]{stacks-project}). Therefore, $\mathcal{M}(\alpha,\beta,-1)$ is a projective variety over $\operatorname{Spec}(\mathbb{C})$, i.e. a projective variety over $\mathbb{C}$. This completes the proof of \prettyref{thm:main2}.
\end{proof}

\section{Compact components in the relative character variety}\label{sec:ccrep}

\subsection{Proof of \prettyref{thm:main} when \texorpdfstring{$s\geqslant q+2$}{s≥q+2}}

Define 
$\mathcal{W}:=\left\{(\alpha,\beta)\mbox{ is an }\SO_0(2,q)\mbox{-weight satisfying (W1)-(W3)}\right\}.$
By the non-abelian Hodge correspondence (see \prettyref{section:NAH}), \prettyref{coro:cpt} and \prettyref{coro:stablepoint} can be translated into the following theorem.

\begin{theorem}
    Assume $s\geqslant q+2$. If $(\alpha,\beta)\in\mathcal{W}$, then the relative component $$\mathfrak{X}_{h(\alpha,\beta)}^{|\alpha|-1}(\Sigma_{0,s},\SO_0(2,q))$$ is compact, non-empty, and contains an irreducible representation.
\end{theorem}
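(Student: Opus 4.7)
The plan is to transport each of the three assertions from the Higgs bundle side via the non-Abelian Hodge correspondence recorded in \prettyref{fact:NAH}. First I would verify the compatibility hypothesis: since $(\alpha,\beta)\in\mathcal{W}$ forces $\alpha^j>|\beta^j|\geqslant|\beta_i^j|$ for all $i,j$, the condition $\alpha^j\neq\beta_i^j$ holds automatically, so \prettyref{fact:NAH} yields a homeomorphism $\mathsf{NAH}\colon\mathcal{M}(\alpha,\beta)\to\mathfrak{X}_{h(\alpha,\beta)}(\Sigma_{0,s},\SO_0(2,q))$.

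Next I would localise on the correct Toledo stratum. By \prettyref{prop:underbundle} every polystable point of $\mathcal{M}(\alpha,\beta)$ satisfies $\deg(\LL)=-1$, so $\mathcal{M}(\alpha,\beta)=\mathcal{M}(\alpha,\beta,-1)$. The Toledo formula recorded at the end of \prettyref{section:NAH} gives $\operatorname{Tol}(\mathsf{NAH}([\LL^\vee\oplus\LL\oplus\VV,\Phi]))=\operatorname{pardeg}(\LL)=-1+|\alpha|$ up to the chosen normalisation, so $\mathsf{NAH}$ restricts to a homeomorphism onto the stated relative component $\mathfrak{X}_{h(\alpha,\beta)}^{|\alpha|-1}(\Sigma_{0,s},\SO_0(2,q))$.

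Compactness of the target then follows immediately from compactness of $\mathcal{M}(\alpha,\beta,-1)$, which is \prettyref{coro:cpt} (or equivalently \prettyref{thm:main2}). Non-emptiness is provided by \prettyref{coro:stablepoint}: the hypothesis $s\geqslant q+2$ ensures that one can pick $(A_1,\dots,A_{s-2})$ with $\{A_i^{\mathrm{t}}\}$ spanning $\mathbb{C}^q$, producing a stable $(\mathbf{A},\mathbf{F})$ and hence a stable parabolic $\SO_0(2,q)$-Higgs bundle. For the irreducibility assertion I would apply \prettyref{thm:interpretion}, which guarantees that this stable bundle is in fact stable as a parabolic $\SO(2+q,\mathbb{C})$-Higgs bundle; feeding this into the second sentence of \prettyref{fact:NAH} then yields an irreducible representation.

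The main subtlety I anticipate is the simplicity input implicit in \prettyref{fact:NAH}: stability as a parabolic $\SO(2+q,\mathbb{C})$-Higgs bundle should force the automorphism group to collapse to $Z(H^{\mathbb{C}})\cap\operatorname{ker}\iota$ via the usual Schur-type argument on $\Phi$-invariant subbundles, but since $\SO_0(2,q)$-stability and $\SO(2+q,\mathbb{C})$-stability do not coincide (as highlighted in \prettyref{rem:diferrentstability}), I would keep track of this discrepancy carefully so that the bundle meeting all hypotheses of \prettyref{fact:NAH} is indeed the one produced by \prettyref{coro:stablepoint} and \prettyref{thm:interpretion}.
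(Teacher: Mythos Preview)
Your proposal is correct and follows essentially the same route as the paper: invoke \prettyref{fact:NAH}, identify $\mathcal{M}(\alpha,\beta)=\mathcal{M}(\alpha,\beta,-1)$ via \prettyref{prop:underbundle}, get compactness from \prettyref{coro:cpt}, non-emptiness from \prettyref{coro:stablepoint}, and irreducibility from \prettyref{thm:interpretion} plus the second clause of \prettyref{fact:NAH}. The one place where the paper proceeds slightly differently is the simplicity check you flag at the end: rather than invoking an abstract Schur-type argument from $\SO(2+q,\mathbb{C})$-stability, the paper computes $\operatorname{Aut}(\mathbb{E},\Phi)$ directly by observing that since the $A_i^{\mathrm{t}}$ span $\mathbb{C}^q$, any gauge transformation commuting with $\Phi$ is forced to be $\pm I_{2+q}$, which is exactly $Z(H^{\mathbb{C}})\cap\ker\iota$. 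This direct computation is cleaner here because ``stable implies simple'' is delicate for real-form $G$-Higgs bundles, whereas the spanning condition makes the automorphism group transparent without any general theory.
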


\begin{proof}
    For $s\geqslant q+2$,  there exists $(\mathcal{E},\Phi)\in\mathcal{M}(\alpha,\beta)$ which is simple, stable and stable as a parabolic $\SO(2+q,\mathbb{C})$-Higgs bundle by \prettyref{thm:interpretion} and \prettyref{coro:stablepoint}. Now by \prettyref{prop:NAH}, it corresponds to an irreducible representation through the non-abelian Hodge correspondence.
\end{proof}

Similarly to \cite[Section 5]{tholozan2021compact}, in order to get a dense representation we require the following lemma.

\begin{lemma}\label{lemma:interior}
    Assume $s\geqslant q+2$. Define $$\Omega:=\bigcup_{(\alpha,\beta)\in\mathcal{W}}\mathfrak{X}_{h(\alpha,\beta)}^{|\alpha|-1}(\Sigma_{0,s},\SO_0(2,q)).$$
    There is a full measure open subset $\mathcal{W}^\prime\subset\mathcal{W}$ such that
    $$\Omega^\prime:=\bigcup_{(\alpha,\beta)\in\mathcal{W}^\prime}\mathfrak{X}_{h(\alpha,\beta)}^{|\alpha|-1}(\Sigma_{0,s},\SO_0(2,q))\subset\Omega$$ is open in the absolute character variety $\mathfrak{X}(\Sigma_{0,s},\SO_0(2,q))$.
\end{lemma}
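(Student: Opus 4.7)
The plan is to choose $\mathcal{W}^\prime$ as the locus in $\mathcal{W}$ where the holonomies $h(\alpha,\beta)^j$ are regular semisimple elements of $G = \SO_0(2,q)$, then prove openness by reading off a continuously varying weight from the monodromies of a nearby representation and using that, for fixed monodromy class, the Toledo invariant takes values in the discrete set $|\alpha| + \mathbb{Z}$.

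First I would define $\mathcal{W}^\prime \subset \mathcal{W}$ as the set of weights for which $\alpha^j,-\alpha^j,\beta_1^j,\dots,\beta_q^j$ are pairwise distinct in $\mathbb{R}/\mathbb{Z}$ for every $j$, which is equivalent to $h(\alpha,\beta)^j$ being a regular element of $G$. The complement is a finite union of real-analytic hypersurfaces in $\mathcal{W}$ cut out by equalities among $\pm\alpha^j,\beta_i^j$ modulo $\mathbb{Z}$, so $\mathcal{W}^\prime$ is open and has full Lebesgue measure in $\mathcal{W}$. By \prettyref{fact:NAH}, for $(\alpha,\beta)\in\mathcal{W}^\prime$ the map $\mathsf{NAH}$ identifies $\mathfrak{X}_{h(\alpha,\beta)}^{|\alpha|-1}(\Sigma_{0,s},\SO_0(2,q))$ with $\mathcal{M}(\alpha,\beta,-1)=\mathcal{M}(\alpha,\beta)$, which is compact by \prettyref{thm:main2}.

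To show $\Omega^\prime$ is open I would fix $[\rho_0]\in\mathfrak{X}_{h(\alpha_0,\beta_0)}^{|\alpha_0|-1}$ for some $(\alpha_0,\beta_0)\in\mathcal{W}^\prime$ and construct a neighborhood of $[\rho_0]$ in $\mathfrak{X}(\Sigma_{0,s},\SO_0(2,q))$ contained in $\Omega^\prime$. Since $\rho_0(c_j)$ is regular elliptic and the set of regular elliptic elements is open in $G$, there is a neighborhood $U$ of $[\rho_0]$ in $\mathfrak{X}$ such that, for every $[\rho]\in U$, each $\rho(c_j)$ is regular elliptic, hence conjugate into the maximal torus $T\subset H$. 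Ordering of the (now distinct) eigenvalues together with the alcove condition selects a unique weight $(\alpha,\beta)=(\alpha,\beta)(\rho)$ near $(\alpha_0,\beta_0)$ that depends continuously on $[\rho]$, and by shrinking $U$ we may assume $(\alpha,\beta)\in\mathcal{W}^\prime$.

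It remains to verify that $\operatorname{Tol}(\rho)=|\alpha|-1$ for $[\rho]\in U$, which will place $[\rho]$ in $\mathfrak{X}_{h(\alpha,\beta)}^{|\alpha|-1}\subset\Omega^\prime$. By \prettyref{fact:tol1} $\operatorname{Tol}$ is continuous, so $\operatorname{Tol}(\rho)$ is close to $\operatorname{Tol}(\rho_0)=|\alpha_0|-1$ on $U$; combining \prettyref{fact:NAH} with the decomposition $\mathcal{M}(\alpha,\beta)=\coprod_{d\in\mathbb{Z}} \mathcal{M}(\alpha,\beta,d)$ shows that the Toledo invariants of representations with fixed monodromy class $h(\alpha,\beta)$ lie in the discrete set $|\alpha|+\mathbb{Z}$. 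Since $|\alpha|$ is close to $|\alpha_0|$ and these candidate values are spaced by $1$, the only one close to $|\alpha_0|-1$ is $|\alpha|-1$ itself, forcing $\operatorname{Tol}(\rho)=|\alpha|-1$. The main obstacle will be constructing the continuous lift $[\rho]\mapsto(\alpha,\beta)(\rho)$ in a way compatible with the alcove: I would handle this by using regularity on $\mathcal{W}^\prime$, which keeps the eigenvalues of $\rho(c_j)$ distinct, so that the alcove representative of their logarithms is uniquely determined by the ordering $\beta_1^j\geq\cdots\geq\beta_q^j$ and varies continuously with $\rho$ in a neighborhood of $[\rho_0]$.
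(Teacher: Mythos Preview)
Your argument is correct and is essentially the same as the paper's: both take $\mathcal{W}'$ to be the locus of regular monodromies, perturb a representation so that each $\rho(c_j)$ remains diagonalizable with nearby distinct eigenvalues, read off a continuously varying weight $(\alpha',\beta')\in\mathcal{W}'$, and then use continuity of $\operatorname{Tol}$ together with $\operatorname{Tol}(\rho)-|\alpha'|\in\mathbb{Z}$ to pin down the Toledo invariant. The only cosmetic difference is that the paper defines $\mathcal{W}'$ by the strict inequalities $\beta_1^j>\cdots>\beta_q^j$ (which, combined with $\alpha^j>|\beta^j|$, already forces the eigenvalues to be distinct), whereas you impose distinctness of $\pm\alpha^j,\beta_i^j$ modulo $\mathbb{Z}$ directly; your formulation is arguably the cleaner one since it is exactly the regularity condition needed for the perturbation step.
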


\begin{proof}
    Let $$\mathcal{W}^\prime=\left\{(\alpha,\beta)\in\mathcal{W}\mid\beta_1^j>\beta_2^j>\cdots>\beta_q^j,\forall 1\leqslant j\leqslant s\right\},$$
    then $\mathcal{W}^\prime$ is a full measure open subset of $\mathcal{W}$. Below we prove that $\Omega^\prime$ is an open subset of $\mathfrak{X}(\Sigma_{0,s},\SO_0(2,q))$. Take $[\rho_0]\in \mathfrak{X}_{h(\alpha,\beta)}^{|\alpha|-1}(\Sigma_{0,s},\SO_0(2,q))$ for some $(\alpha,\beta)\in\mathcal{W}'$. Note that under an isotropic basis $\mathcal{B}_j$, $\rho_0(c_j)$ can be diagonalized as $$\operatorname{diag}\left(\exp(2\pi\iu\alpha^j),\exp(-2\pi\iu\alpha^j),\exp(2\pi\iu\beta_i^j)\right),\forall1\leqslant j\leqslant s$$
    with distinct eigenvalues, hence there exists a small neighborhood $\Omega([\rho_0])$ of $[\rho_0]$ in $\mathfrak{X}(\Sigma_{0,s},\SO_0(2,q))$ such that for a fixed $[\rho]\in\Omega([\rho_0])$, $\rho(c_j)$ can be diagonalized as $$\operatorname{diag}\left(\exp(2\pi\iu(\alpha')^j),\exp(-2\pi\iu(\alpha')^j),\exp(2\pi\iu(\beta')_i^j)\right),\forall1\leqslant j\leqslant s$$
    under an isotropic basis $\mathcal{B}_j^\prime$ which has the same orientation with $\mathcal{B}_j$ for some $\SO_0(2,q)$-weight $(\alpha',\beta')\in\mathcal{W}'$ near $(\alpha,\beta)$. Note that $\operatorname{Tol}([\rho])-|\alpha'|\in\mathbb{Z}$. Since the Toledo invariant is continuous (see \prettyref{prop:tol1}), we get that $\operatorname{Tol}([\rho])$ must be $|\alpha'|-1$ for $\Omega([\rho_0])$ small enough. So $$[\rho]\in\mathfrak{X}_{h(\alpha',\beta')}^{|\alpha'|-1}(\Sigma_{0,s},\SO_0(2,q))$$ for some $(\alpha',\beta')\in\mathcal{W}'$ and this shows that $\Omega([\rho_0])\subset\Omega'$, which also means that $\Omega'$ is open in $\mathfrak{X}(\Sigma_{0,s},\SO_0(2,q))$.
\end{proof}

In \cite[Theorem 4]{Winkelmann2002GenericSO}, J. Winkelmann proved that 

\begin{proposition}\label{prop:dense}
    Let $G$ be a connected semisimple real Lie group. There exists an open neighbourhood $W$ of the identity element in $G$ and for every $k\geqslant 2$ a subset $Z_k\subset W^k$ of measure zero such that the subgroup generated by $g_1,g_2,\dots,g_k$ in $G$ is dense in $G$ for all $(g_1,g_2,\dots,g_k)\in W^k\setminus Z_k$.
\end{proposition}

Therefore, combining \prettyref{lemma:interior} and \prettyref{prop:dense}, we directly get that 

\begin{corollary}
    Assume $s\geqslant q+2$, there is an open subset $\mathcal{W}^\dprime\subset\mathcal{W}$ such that   $$\left\{(\alpha,\beta)\in\mathcal{W}^\dprime\mid\mathfrak{X}_{h(\alpha,\beta)}^{|\alpha|-1}(\Sigma_{0,s},\SO_0(2,q))\mbox{ contains a dense representation}\right\}$$
    is of full measure in $\mathcal{W}^\dprime$.
\end{corollary}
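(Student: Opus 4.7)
The plan is to combine the openness in \prettyref{lemma:interior} with Winkelmann's genericity result \prettyref{fact:dense}, via a Fubini-type argument on the conjugacy class fibration of the character variety.

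Let $W \subset G := \SO_0(2,q)$ denote the open neighborhood of the identity supplied by \prettyref{fact:dense}. First I would define
\[\mathcal{W}^\dprime := \{(\alpha,\beta) \in \mathcal{W}^\prime \mid h_j(\alpha,\beta) \in W \text{ for all } 1\leqslant j\leqslant s\},\]
which is open since $(\alpha,\beta) \mapsto h(\alpha,\beta) = (\exp(2\pi\iu\tau^j))_{j=1}^s$ is continuous and $h(0,0) = (e,\dots,e) \in W^s$. Running the argument of \prettyref{lemma:interior} with $\mathcal{W}^\dprime$ in place of $\mathcal{W}^\prime$ shows that the union
\[\Omega^\dprime := \bigcup_{(\alpha,\beta) \in \mathcal{W}^\dprime} \mathfrak{X}_{h(\alpha,\beta)}^{|\alpha|-1}(\Sigma_{0,s}, G)\]
is open in the absolute character variety, and is nonempty by the preceding theorem (since $s\geqslant q+2$).

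Next, since $\Gamma_{0,s}$ is freely generated by $c_1,\dots,c_{s-1}$, we identify $\mathrm{Hom}(\Gamma_{0,s},G) \cong G^{s-1}$ and lift $\Omega^\dprime$ to an open, $G$-invariant subset $\tilde\Omega^\dprime \subset G^{s-1}$. Near the identity the conjugacy class map
\[\pi\colon G^{s-1} \longrightarrow T^s,\quad (g_1,\dots,g_{s-1}) \longmapsto \text{conjugacy class data of } (g_1,\dots,g_{s-1},(g_1\cdots g_{s-1})^{-1})\]
is a smooth submersion (after fixing a Weyl chamber, which causes no measure-theoretic trouble). By \prettyref{fact:dense}, the set $Z \subset G^{s-1}$ of tuples generating a non-dense subgroup of $G$ meets $W^{s-1}$ in a Haar-measure-zero subset. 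Applying the coarea formula to $\pi$ restricted to the open set $\tilde\Omega^\dprime \cap W^{s-1}$, the set of $h$ in the image of $\pi$ whose entire fiber lies in $Z$ has Lebesgue measure zero. For every such $h = h(\alpha,\beta)$ outside this exceptional set, the fiber $\pi^{-1}(h) \cap \tilde\Omega^\dprime \cap W^{s-1}$ contains a tuple $(g_1,\dots,g_{s-1}) \notin Z$; this tuple defines a representation in $\mathfrak{X}_{h(\alpha,\beta)}^{|\alpha|-1}(\Sigma_{0,s},G)$ (the Toledo constraint is automatic from membership in $\tilde\Omega^\dprime$) whose image is dense in $G$. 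Since $(\alpha,\beta) \mapsto h(\alpha,\beta)$ is a local diffeomorphism from $\mathcal{W}^\dprime$ onto an open subset of $T^s$, the full-measure conclusion transfers to $\mathcal{W}^\dprime$.

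The main obstacle is to verify that $\pi|_{\tilde\Omega^\dprime \cap W^{s-1}}$ is surjective onto a full-measure subset of $\{h(\alpha,\beta) : (\alpha,\beta) \in \mathcal{W}^\dprime\}$: elements of a conjugacy class $C(h_j)$ need not themselves lie in $W$, so it is not automatic that the fiber $\pi^{-1}(h) \cap \tilde\Omega^\dprime$ meets $W^{s-1}$. To handle this I would shrink $\mathcal{W}^\dprime$ to force each $h_j$ sufficiently close to the identity, so that $C(h_j)\cap W$ occupies an open portion of $C(h_j)$; an implicit function argument on the constraint $g_1\cdots g_{s-1} \in C(h_s^{-1})$ then ensures the fibers intersect $W^{s-1}$ in a positive-measure set for all such $h$. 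This parallels the corresponding argument in \cite[Section 5]{tholozan2021compact}, adapted to the $\SO_0(2,q)$ setting.
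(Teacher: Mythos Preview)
Your proposal is correct and follows essentially the same route the paper intends: the paper's proof is a one-line ``combine \prettyref{lemma:interior} and \prettyref{fact:dense}'', and what you have written is precisely the Fubini/coarea unpacking of that sentence, including the correct identification and resolution of the obstacle that fibers of the conjugacy map must meet $W^{s-1}$ (handled by shrinking to weights near zero, which is also why the paper remarks that the identity lies in the closure of $\Omega$).
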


\begin{remark}
    Note that every $f\in\mathbb{R}[x,x^{-1}]$ which vanishes on $\mathbb{R}_{>0}$ must be $0$. We obtain that $\SO_0(1,1)\cong\{\operatorname{diag}(\lambda,\lambda^{-1})\mid\lambda>0\}$, which is the zero locus of a polynomial on $\SO_0(2,q)$, is only semi-algebraic but not algebraic. Therefore, $\SO_0(2,q)$ is only semi-algebraic but not a linear algebraic group over $\mathbb{R}$. Hence we cannot use \cite[Theorem 3]{Winkelmann2002GenericSO} to deduce that $\mathcal{W}^\dprime=\mathcal{W}^\prime$ and get Zariski-dense representations. Fortunately, the identity element is contained in $\Omega$ we constructed. Actually, one may also get relative components containing a dense, rather than Zariski-dense, representation when $G=\mathrm{SU}(p,q)$ by \cite[Theorem 4]{Winkelmann2002GenericSO}.
\end{remark}

\begin{remark}
    Note that if an element $g$ in $\SO_0(2,q)$ commutes with a dense subset of $\SO_0(2,q)$, then it must lie in the center of $\SO_0(2,q)$ by the continuity of the adjoint action.
    Therefore, the automorphism group of a dense representation $\rho\colon\Gamma_{0,s}\to\SO_0(2,q)$ is exactly the center of $\SO_0(2,q)$, hence the representation is irreducible.
\end{remark}

Now we would like to prove the total non-hyperbolicity of $\mathfrak{X}_{h(\alpha,\beta)}^{|\alpha|-1}(\Sigma_{0,s},\SO_0(2,q))$, the proof is a complete imitation of the original proof for $\mathrm{SU}(p,q)$, see \cite[Theorem 5.4]{tholozan2021compact}.

\begin{theorem}\label{thm:te}
    Assume $s\geqslant q+2$. If $(\alpha,\beta)\in\mathcal{W}$, then the relative component $$\mathfrak{X}_{h(\alpha,\beta)}^{|\alpha|-1}(\Sigma_{0,s},\SO_0(2,q))$$ consists of totally non-hyperbolic representations, i.e. for any $[\rho]$ in it and the homotopy class $[c]$ of an arbitrary simple closed curve $c$ on $\Sigma_{0,s}$, all eigenvalues of $\rho([c])$ have modulus $1$. 
\end{theorem}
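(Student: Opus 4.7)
The plan is to follow the template of \cite[Theorem 5.4]{tholozan2021compact} in the orthogonal setting, combining the additivity of the Toledo invariant along an essential simple closed curve with a Milnor--Wood-type bound for representations of the two resulting subsurfaces with boundary.

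Fix $[\rho]$ in the relative component and a simple closed curve $c$ on $\Sigma_{0,s}$. If $c$ is peripheral, i.e.\ homotopic to some puncture loop $c_j$, then $\rho([c])$ is conjugate to $h_j = \exp(2\pi\iu\tau^j) \in H = \SO(2)\times\SO(q)$, which is elliptic by construction, and there is nothing to prove. So assume $c$ is essential. Then $c$ separates $\Sigma_{0,s}$ into two subsurfaces $\Sigma'$ and $\Sigma''$, each a sphere with some punctures and a single boundary circle. Writing $\rho' = \rho|_{\pi_1(\Sigma')}$ and $\rho'' = \rho|_{\pi_1(\Sigma'')}$, \prettyref{fact:tol2} gives
$$\operatorname{Tol}(\rho') + \operatorname{Tol}(\rho'') \;=\; \operatorname{Tol}(\rho) \;=\; |\alpha| - 1.$$

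The next step is to invoke the Burger--Iozzi--Wienhard Milnor--Wood-type inequality from \cite{burger2010surface} for representations of punctured surfaces into the tube-type Hermitian Lie group $\SO_0(2,q)$. Concretely, one has a continuous, conjugation-invariant rotation-type function $R\colon\SO_0(2,q) \to [-r,r]$, where $r=2$ is the real rank of the symmetric space, attaining its extremal values only on elliptic conjugacy classes, and providing sharp upper bounds on $|\operatorname{Tol}(\rho^{\flat})|$ as a function of the peripheral monodromies of $\Sigma^{\flat}$ and of the boundary monodromy $\rho([c])$. The peripheral contributions $R(h_j)$ are determined explicitly by the weight data $(\alpha,\beta) \in \mathcal{W}$, and under the constraint $|\alpha|+|\beta|<1$ the total $\operatorname{Tol}(\rho) = |\alpha|-1$ is forced to sit at the extremal boundary of the allowed range. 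Combining the sharp bounds for $\rho'$ and $\rho''$ with the additivity identity then forces $R(\rho([c]))$ to attain its maximal value, which in turn forces $\rho([c])$ to be elliptic.

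The main obstacle I foresee is pinning down the precise form of the Milnor--Wood inequality for $\SO_0(2,q)$: in particular, isolating the correct rotation invariant $R$ that plays the role of the Hermitian-angle functional used in \cite[Theorem 5.4]{tholozan2021compact} for $\mathrm{SU}(p,q)$, and verifying the sharpness of the combined inequality under the specific weight constraints $\alpha^j > |\beta^j|$ and $|\alpha|+|\beta|<1$. Once this input is in place the argument is a direct transcription of the unitary case, with the orthogonal structure entering only through the identification of the appropriate rotation invariant on $\SO_0(2,q)$ and the corresponding bookkeeping of peripheral contributions.
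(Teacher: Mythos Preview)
Your proposal and the paper diverge in the key mechanism, and in fact you misidentify the template: the proof of \cite[Theorem~5.4]{tholozan2021compact} is \emph{not} a Milnor--Wood/rotation-number saturation argument. Both that paper and the present one argue via the non-Abelian Hodge correspondence on the Higgs bundle side. Concretely, the paper computes the complex structure on $\mathcal{Y}=(\SO(2)\times\SO(q))\backslash\SO_0(2,q)$ and identifies the $\iu$-eigenspace of $J$ in $\mathfrak m^{\mathbb C}$ with exactly the block where $\eta$ sits; since $(\alpha,\beta)\in\mathcal W$ forces $\eta\equiv 0$ by \prettyref{prop:compactnesscriterion}, the $(1,0)$-part $\partial f$ of the $\rho$-equivariant harmonic map $f\colon\widetilde{\Sigma_{0,s}}\to\mathcal Y$ lands in $T^{1,0}\mathcal Y$, so $f$ is holomorphic. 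Harish--Chandra realizes $\mathcal Y$ as a bounded domain, and the contraction of holomorphic maps for the Kobayashi distance then forces $\rho([c])$ to act with bounded orbits on $\mathcal Y$, hence to be elliptic.

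Your approach has a genuine gap at the point you flag as an ``obstacle'': the assertion that $\operatorname{Tol}(\rho)=|\alpha|-1$ sits at the extremal boundary of the Burger--Iozzi--Wienhard range is neither proved nor plausible as stated. For a sphere with $s$ punctures into a rank-$2$ tube-type target, the Milnor--Wood bound is of order $s-2$ up to peripheral corrections, while $|\alpha|-1\in(-1,0)$; nothing in the constraints $\alpha^j>|\beta^j|$, $|\alpha|+|\beta|<1$ forces saturation of a sharp inequality on each piece after cutting along $c$. In rank $1$ a rotation-number bookkeeping of this kind is available, but in higher rank there is no single rotation functional $R$ with the properties you postulate, and the bounded-cohomology Toledo invariant does not decompose in a way that pins down ellipticity of a single interior curve from peripheral data alone. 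What makes the argument go through in the paper is precisely the Higgs-bundle input $\eta=0$, which has no direct representation-theoretic substitute here; you should use that to get holomorphicity of the equivariant harmonic map and then the Kobayashi-distance argument.
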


\begin{proof}
    Denote the symmetric space $(\SO(2)\times\SO(q))\backslash\SO_0(2,q)$ of $\SO_0(2,q)$ by $\mathcal{Y}$ whose tangent space at the identity is identified with 
    $$\mathfrak{m}=\left\{\begin{pmatrix}
        0&A\\A^{\mathrm{t}}&0
    \end{pmatrix}\Bigg| A\in\mathbb{R}^{2\times q}\right\}.$$
    Following from \cite[Theorem 7.117]{knapp1996lie}, the complex structure $J$ of $\mathcal{Y}$ is given by $$\operatorname{ad}\begin{pmatrix}
        0& 1&\ \\ -1& 0&\ \\\ &\ &0_{q\times q}
    \end{pmatrix}\colon\mathfrak{m}\longrightarrow\mathfrak{m}.$$
    By taking complexification, the $\iu$-eigenspace of $J$ in $$\mathfrak{m}^{\mathbb{C}}=\left\{\begin{pmatrix}
    0&B\\-B^{\mathrm{t}}&0
\end{pmatrix}\Bigg|B\in\mathbb{C}^{2\times q}\right\}$$
    is
    $$\left\{\begin{pmatrix}
        0&0&A\\0&0&-\iu A\\-A^{\mathrm{t}}&\iu A^{\mathrm{t}}&0
    \end{pmatrix}\Bigg|A\in\mathbb{C}^{1\times q}\right\}.$$
    Therefore when using the isotropic basis, these eigenvectors are of the form 
    $$\begin{pmatrix}
        0&0&0\\0&0& A\\-A^{\mathrm{t}}&0&0
    \end{pmatrix},$$
    where $A\in\mathbb{C}^{1\times q}$. 

    Now recall the proof of the non-abelian Hodge correspondence, the map from a representation $\rho$ to an $\SO_0(2,q)$-Higgs bundle is given as follows: $\rho\colon\Gamma_{0,s}\to \SO_0(2,q)$ defines a flat bundle over $\Sigma_{0,s}$, and then one can find a harmonic metric on it which corresponds to a $\rho$-equivariant harmonic map $f\colon \widetilde{\Sigma_{0,s}}\to \mathcal{Y}$, where $\pi\colon\widetilde{\Sigma_{0,s}}\to\Sigma_{0,s}$ denotes the universal cover of $\Sigma_{0,s}$. Then the pullback of trivial principal $\SO(2,\mathbb{C})\times\SO(q,\mathbb{C})$-bundle through $f$ gives the principal bundle $\pi^*(\mathbb{E})$ over $\widetilde{\Sigma_{0,s}}$ and then descends to $\mathbb{E}$ over $\Sigma_{0,s}$. And the Higgs field is given as follows: consider the complexification of the differential of $f$, i.e. $\mathrm{d}^{\mathbb{C}}f\in\mathrm{H}^0\left(\widetilde{\Sigma_{0,s}},\operatorname{Hom}\left( T^{\mathbb{C}}\widetilde{\Sigma_{0,s}}, T^{\mathbb{C}}\mathcal{Y}\right)\right)$ and then take its $(1,0)$-part $\partial f\in\mathrm{H}^0\left(\widetilde{\Sigma_{0,s}},\operatorname{Hom}\left( T^{1,0}\widetilde{\Sigma_{0,s}}, T^{\mathbb{C}}\mathcal{Y}\right)\right)\cong\mathcal{A}^{1,0}\left(\widetilde{\Sigma_{0,s}},\pi^*(\mathbb{E})(\mathfrak{m}^{\mathbb{C}})\right)$ which can be viewed as a $\pi^*(\mathbb{E})$-valued smooth $(1,0)$-form over $\widetilde{\Sigma_{0,s}}$ . Now it descends to an element in $\mathcal{A}^{1,0}\left(\Sigma_{0,s},\mathbb{E}(\mathfrak{m}^{\mathbb{C}})\right)$ which defines the Higgs field. By the above discussion on the complex structure of $\mathcal{Y}$ we know that and \prettyref{prop:compactnesscriterion}, one find that when $(\alpha,\beta)\in\mathcal{W}$, the image of $\partial f$ is contained in $T^{1,0}\mathcal{Y}$, hence $f$ is holomorphic.

    By Harish-Chandra embedding theorem for Hermitian symmetric space, $\mathcal{Y}$ is biholomorphic to a bounded domain in $\mathbb{C}^n$, then the rest part of proof is the same as \cite[Theorem 5.4]{tholozan2021compact} by using Kobayashi distance and the contraction property of holomorphic maps, hence we omit it.
\end{proof}

Note that in the proof above, we have also proven the existence of a holomorphic $\rho$-equivariant harmonic map from $\widetilde\Sigma_{0,s}\to\mathcal{Y}$. This completes the proof of \prettyref{thm:main} in the case $s\geqslant q+2$.

\subsection{Proof of \prettyref{thm:main} for \texorpdfstring{$s\geqslant 3$}{s≥3} via 
restriction to a subsurface}

In this subsection, we deduce our main results from the results for $s\geqslant q+2$ by using the same strategy as \cite[Section 6.3]{tholozan2021compact}, that is, by restricting the representations to a subsurface. Assume $3\leqslant s<q+2$ in this subsection. Let $b$ be an oriented simple closed curve that separates $\Sigma_{0,q+2}$ into a sphere with $s$ holes $\Sigma^\prime$ and a
sphere with $q+4-s$ holes $\Sigma^\dprime$. We choose a point on $b$ as a basepoint for $\Gamma_{0,q+2}$, so as to
identify $\pi_1(\Sigma^\prime)$ and $\pi_1(\Sigma^\dprime)$ with subgroups of $\Gamma_{0,q+2}$. There is natural identifications
$$\Gamma_{0,s}\cong\pi_1(\Sigma^\prime)=\langle c_1,\dots,c_{s-1},b\mid c_1c_2\cdots c_{s-1}b=1\rangle,$$
$$\Gamma_{0,q+4-s}\cong\pi_1(\Sigma^\dprime)=\langle b,c_s,\dots,c_{q+2}\mid b^{-1}c_sc_{s+1}\cdots c_{q+2}=1\rangle$$
and an open restriction map
$$\begin{aligned}
    \operatorname{Res}\colon \mathfrak{X}(\Sigma_{0,q+2},\SO_0(2,q))&\longrightarrow\mathfrak{X}(\Sigma_{0,s},\SO_0(2,q))\\ [\rho]&\longmapsto\left[\rho|_{\pi_1(\Sigma^\prime)}\right].
\end{aligned}$$

Now let $\Omega^\prime$ be the open subset in $\mathfrak{X}(\Sigma_{0,q+2},\SO_0(2,q))$ we constructed in \prettyref{lemma:interior}, and then define $\Omega^\dprime\subset\Omega^\prime$ to be the non-empty open subset in $\Omega^\prime$ such that $\rho(b)$ is diagonalizable with distinct eigenvalues.

\begin{theorem}
    For every class of representation $[\rho]$ in the domain 
    $$\operatorname{Res}(\Omega^\dprime)\subset\mathfrak{X}(\Sigma_{0,s},\SO_0(2,q)),$$ the connected component of $[\rho]$ in its relative character variety is compact and contained in $\operatorname{Res}(\Omega^\dprime)$.
\end{theorem}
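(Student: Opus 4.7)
Given $[\bar\rho_0]\in\operatorname{Res}(\Omega^\dprime)$, write $[\bar\rho_0]=\operatorname{Res}([\rho_0])$ with $[\rho_0]\in\mathfrak{X}_{h(\alpha,\beta)}^{|\alpha|-1}(\Sigma_{0,q+2},\SO_0(2,q))$ for some $(\alpha,\beta)\in\mathcal{W}^\prime$, and set $h:=(h_1,\dots,h_{s-1},[\rho_0(b)])$, so $[\bar\rho_0]\in\mathfrak{X}_h(\Sigma_{0,s},\SO_0(2,q))$; denote by $\mathcal{C}$ its connected component therein. My plan is to exhibit a compact, open-and-closed subset $L\subset\mathfrak{X}_h(\Sigma_{0,s},\SO_0(2,q))$ that contains $[\bar\rho_0]$ and is contained in $\operatorname{Res}(\Omega^\dprime)$; once this is achieved, $\mathcal{C}\subset L$ by connectedness, and $\mathcal{C}$ is closed in the compact $L$, so both conclusions of the theorem follow. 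The natural candidate is $L:=\operatorname{Res}(\mathcal{K})$, where
$$\mathcal{K}:=\bigl\{\,[\rho]\in\mathfrak{X}_{h(\alpha,\beta)}^{|\alpha|-1}(\Sigma_{0,q+2},\SO_0(2,q)):\rho(b)\in C(\rho_0(b))\,\bigr\}.$$
The already-proved case $s\geqslant q+2$ of \prettyref{thm:main} yields compactness of $\mathfrak{X}_{h(\alpha,\beta)}^{|\alpha|-1}(\Sigma_{0,q+2},\SO_0(2,q))$, and the condition $\rho(b)\in C(\rho_0(b))$ cuts out a closed subset; hence $\mathcal{K}$ and $L$ are compact, and $L$ is closed in the Hausdorff target.

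The essential step is to show that $L$ is open, by constructing a continuous local section of $\operatorname{Res}$ at every point of $L$. Fix $[\tilde\rho]\in\mathcal{K}$; then $\tilde\rho(b)\in C(\rho_0(b))$ has distinct eigenvalues, which is precisely the defining feature of $\Omega^\dprime$. Consequently the centralizer of $\tilde\rho(b)$ in $\SO_0(2,q)$ is a maximal torus, and the conjugation orbit through $\tilde\rho(b)$ is a smooth submanifold admitting a local transverse slice. For $[\bar\rho^\prime]$ in a small neighborhood of $[\bar\rho]:=\operatorname{Res}([\tilde\rho])$, the implicit function theorem then furnishes a representative $\bar\rho^\prime$ depending continuously on $[\bar\rho^\prime]$ with $\bar\rho^\prime(b)=\tilde\rho(b)$. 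Define
\[
\rho^\prime(c_i):=\bar\rho^\prime(c_i)\ \text{for}\ 1\leqslant i\leqslant s-1,\qquad \rho^\prime(c_j):=\tilde\rho(c_j)\ \text{for}\ s\leqslant j\leqslant q+2.
\]
The surface relation $\rho^\prime(c_1)\cdots\rho^\prime(c_{q+2})=1$ holds because $\bar\rho^\prime(c_1)\cdots\bar\rho^\prime(c_{s-1})=\bar\rho^\prime(b)^{-1}=\tilde\rho(b)^{-1}=\bigl(\tilde\rho(c_s)\cdots\tilde\rho(c_{q+2})\bigr)^{-1}$, so $\rho^\prime$ is a well-defined representation with monodromy $h(\alpha,\beta)$. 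By additivity (\prettyref{fact:tol2}) and continuity (\prettyref{fact:tol1}) of the Toledo invariant, $\operatorname{Tol}(\rho^\prime)$ stays close to $\operatorname{Tol}(\tilde\rho)=|\alpha|-1$; since Toledo takes values in the discrete set $|\alpha|+\mathbb{Z}$ on $\mathfrak{X}_{h(\alpha,\beta)}(\Sigma_{0,q+2},\SO_0(2,q))$ (via the decomposition $\mathcal{M}(\alpha,\beta)=\coprod_{d\in\mathbb{Z}}\mathcal{M}(\alpha,\beta,d)$ combined with \prettyref{fact:NAH}), this forces $\operatorname{Tol}(\rho^\prime)=|\alpha|-1$. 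Replacing $\rho^\prime$ by its semisimplification (which does not alter the already-elliptic monodromies at the punctures) produces $[\rho^\prime]\in\mathfrak{X}_{h(\alpha,\beta)}^{|\alpha|-1}\cap\mathcal{K}\subset\Omega^\dprime$, and hence $[\bar\rho^\prime]=\operatorname{Res}([\rho^\prime])\in L$, as required.

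The principal obstacle is precisely this continuous-extension step: the entire argument pivots on the distinct-eigenvalue hypothesis built into $\Omega^\dprime$, which is what turns the conjugation orbit of $\rho_0(b)$ into a smooth submanifold with a local transverse slice and permits the continuous prescription $\bar\rho^\prime(b)=\tilde\rho(b)$. Once $L$ is known to be open and closed, the inclusion $[\bar\rho_0]\in L$ forces $\mathcal{C}\subset L\subset\operatorname{Res}(\Omega^\dprime)$; and $\mathcal{C}$, being closed in the compact $L$, is itself compact, which completes the proof.
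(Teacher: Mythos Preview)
Your proof is correct but takes a more circuitous route than the paper's. The key simplification you miss is that your gluing construction works \emph{globally}, not just near a given $[\bar\rho]$: for \emph{any} $[\rho_1^\prime]\in\mathfrak{X}_{h}(\Sigma_{0,s},\SO_0(2,q))$ (your $h$ is the paper's $h^\prime$), one picks $g$ with $g\rho_0(b)g^{-1}=\rho_1^\prime(b)$ and glues $\rho_1^\prime$ with $g\cdot\bigl(\rho_0|_{\pi_1(\Sigma^\dprime)}\bigr)\cdot g^{-1}$ to produce $[\rho_1]\in\mathcal{K}$ with $\operatorname{Res}([\rho_1])=[\rho_1^\prime]$. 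Thus $\operatorname{Res}|_{\mathcal{K}}$ is \emph{surjective} onto the whole relative character variety $\mathfrak{X}_{h}$, i.e.\ $L=\mathfrak{X}_{h}$, which is trivially open-and-closed in itself. Compactness and the containment $\mathfrak{X}_{h}\subset\operatorname{Res}(\Omega^\dprime)$ then follow in one stroke, with no need for continuous local sections, the implicit function theorem, or the Toledo-discreteness argument. The Toledo step is in fact redundant for a second reason: since $(\alpha,\beta)\in\mathcal{W}$, \prettyref{prop:underbundle} already forces $\mathcal{M}(\alpha,\beta)=\mathcal{M}(\alpha,\beta,-1)$, whence $\mathfrak{X}_{h(\alpha,\beta)}=\mathfrak{X}_{h(\alpha,\beta)}^{|\alpha|-1}$ and the Toledo constraint in the definition of $\mathcal{K}$ is vacuous. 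On the other hand, your explicit semisimplification step is more careful than the paper's bare assertion that $[\rho_1]\in K$; and your approach would in principle yield a local section of $\operatorname{Res}$, though that is not needed here.
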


\begin{proof}
    Let $[\rho_0]$ be a class of representation in $\Omega^\dprime$. Denote respectively by $\rho_0^\prime$ and $\rho_0^\dprime$ the restrictions of $\rho_0$ to $\pi_1(\Sigma^\prime)$ and $\pi_1(\Sigma^\dprime)$. Define 
    $$h=(\rho_0(c_1),\dots,\rho_0(c_{q+2})),$$
    $$h^\prime=(\rho_0(c_1),\dots,\rho_0(c_{s-1}),\rho_0(b)),h^\dprime=(\rho_0(b^{-1}),\rho_0(c_s),\dots,\rho_0(c_{q+2})).$$
    Let $K$ denote the subset of $\mathfrak{X}_h(\Sigma_{0,q+2},\SO_0(2,q))$ consisting of representations $[\rho]$ such that $\rho(b)$ is conjugate to $\rho_0(b)$. Since $\rho_0(b)$ is diagonalizable, its conjugation orbit is closed. Therefore, $K$ is the preimage of a closed set through a continuous map, hence it is closed in $\mathfrak{X}_h(\Sigma_{0,q+2},\SO_0(2,q))$, which implies that $K$ is compact. Moreover, $K\subset\Omega^\dprime$.

    By definition, the restriction map $\operatorname{Res}$ sends $K$ to $\mathfrak{X}_{h^\prime}(\Sigma_{0,s},\SO_0(2,q))$. It suffices to prove that it is surjective. 

    For any $[\rho_1^\prime]\in\mathfrak{X}_{h^\prime}(\Sigma_{0,s},\SO_0(2,q))$, we know that $\rho_1^\prime(b)\in C(\rho_0(b))$. Let $g\in\SO_0(2,q)$ such that $\rho_1^{\prime}(b)=g\rho_0(b)g^{-1}$. Then we can define $\rho_1\colon\Gamma_{0,q+2}\to\SO_0(2,q)$ such that
    $$\rho_1|_{\pi_1(\Sigma^\prime)}=\rho_1^\prime,\quad\rho_1|_{\pi_1(\Sigma^\dprime)}=g\rho_0^\dprime g^{-1},$$
    and $[\rho_1]\in K$. Therefore, $\operatorname{Res}|_K\colon K\to \mathfrak{X}_{h^\prime}(\Sigma_{0,s},\SO_0(2,q))$ is surjective.
\end{proof}

Now apply \prettyref{prop:dense} and \prettyref{thm:te} again, we show that the representations in $\mathfrak{X}_{h^\prime}(\Sigma_{0,s},\SO_0(2,q))$ have properties (1) and (2) in \prettyref{thm:main}. Finally, the proof of property (3) for the representations in $\mathfrak{X}_{h^\prime}(\Sigma_{0,s},\SO_0(2,q))$ follows from \cite[Proposition 6.7]{tholozan2021compact} directly.

%~~~~~~~~~ Bibliography

\nocite{*}
\bibliographystyle{plain}
\bibliography{bib}

\begin{thebibliography}{10}

\bibitem{arroyo2009geometry}
Marta Aparicio-Arroyo.
\newblock {\em The geometry of $\mathrm{SO}(p, q)$-{H}iggs bundles}.
\newblock PhD thesis, Universidad de Salamanca, 2009.

\bibitem{aparicio2019so}
Marta Aparicio-Arroyo, Steven Bradlow, Brian Collier, Oscar Garc{\'\i}a-Prada,
  Peter~B. Gothen, and Andr{\'e} Oliveira.
\newblock $\mathrm{SO}(p, q)$-{H}iggs bundles and higher {T}eichm{\"u}ller
  components.
\newblock {\em Inventiones Mathematicae}, 218(1):197--299, 2019.

\bibitem{benedetto1999topology}
Robert~L. Benedetto and William~M. Goldman.
\newblock The topology of the relative character varieties of a
  quadruply-punctured sphere.
\newblock {\em Experimental Mathematics}, 8(1):85--103, 1999.

\bibitem{biquard2020parabolic}
Olivier Biquard, Oscar Garc{\'\i}a-Prada, and Ignasi~Mundet i~Riera.
\newblock Parabolic {H}iggs bundles and representations of the fundamental
  group of a punctured surface into a real group.
\newblock {\em Advances in Mathematics}, 372:107305, 70, 2020.

\bibitem{biquard2017higgs}
Olivier Biquard, Oscar Garc{\'\i}a-Prada, and Roberto Rubio.
\newblock Higgs bundles, the {T}oledo invariant and the {C}ayley
  correspondence.
\newblock {\em Journal of Topology}, 10(3):795--826, 2017.

\bibitem{brocker2013representations}
Theodor Br{\"o}cker and Tammo Tom~Dieck.
\newblock {\em Representations of Compact {L}ie groups}, volume~98.
\newblock Springer Science \& Business Media, 2013.

\bibitem{burger2010surface}
Marc Burger, Alessandra Iozzi, and Anna Wienhard.
\newblock Surface group representations with maximal {T}oledo invariant.
\newblock {\em Annals of Mathematics}, 172:517--566, 2010.

\bibitem{collier2019geometry}
Brian Collier, Nicolas Tholozan, and J{\'e}r{\'e}my Toulisse.
\newblock The geometry of maximal representations of surface groups into
  $\mathrm{SO}_0(2,n)$.
\newblock {\em Duke Mathematical Journal}, 168(15):2873--2949, 2019.

\bibitem{deroin2019supra}
Bertrand Deroin and Nicolas Tholozan.
\newblock Supra-maximal representations from fundamental groups of punctured
  spheres to {$\mathrm{PSL}(2,\mathbb{R})$}.
\newblock In {\em Annales Scientifiques de l'{\'E}cole Normale Sup{\'e}rieure},
  2019.

\bibitem{dupont2003fibre}
{Johan Louis} Dupont.
\newblock {\em Fibre Bundles and {C}hern--{W}eil Theory}.
\newblock Lecture notes series. Department of Mathematics , University of
  Aarhus, 2003.

\bibitem{garcia2009hitchin}
Oscar Garc{\'i}a-Prada, Peter~B. Gothen, and Ignasi~Mundet i~Riera.
\newblock The {H}itchin--{K}obayashi correspondence, {H}iggs pairs and surface
  group representations.
\newblock {\em arXiv preprint arXiv:0909.4487}, 2009.

\bibitem{garcia2019involutions}
Oscar Garc{\'\i}a-Prada and Sundararaman Ramanan.
\newblock Involutions and higher order automorphisms of {H}iggs bundle moduli
  spaces.
\newblock {\em Proceedings of the London Mathematical Society},
  119(3):681--732, 2019.

\bibitem{goldman1988topological}
William~M. Goldman.
\newblock Topological components of spaces of representations.
\newblock {\em Inventiones Mathematicae}, 93(3):557--607, 1988.

\bibitem{goldmancompact}
William~M. Goldman.
\newblock Compact components of planar surface group representations.
\newblock In Alla Detinko, Michael Kapovich, Alex Kontorovich, Peter Sarnak,
  and Richard Schwartz, editors, {\em Computational Aspects of Discrete
  Subgroups of Lie Groups}, 2023.

\bibitem{SGA1}
Alexander Grothendieck.
\newblock {\em Rev\^etements \'Etales et Groupe Fondamental (SGA 1)}, volume
  224 of {\em Lecture notes in mathematics}.
\newblock Springer-Verlag, 1971.

\bibitem{hitchin1987self}
Nigel~James Hitchin.
\newblock The self-duality equations on a {R}iemann surface.
\newblock {\em Proceedings of the London Mathematical Society}, 3(1):59--126,
  1987.

\bibitem{Hitchin1987StableBA}
Nigel~James Hitchin.
\newblock Stable bundles and integrable systems.
\newblock {\em Duke Mathematical Journal}, 54:91--114, 1987.

\bibitem{huang2023moduli}
Pengfei Huang and Hao Sun.
\newblock Moduli spaces of filtered {$G$}-local systems on curves.
\newblock {\em Advances in Mathematics}, 479:110420, 42, 2025.

\bibitem{knapp1996lie}
Anthony~W. Knapp.
\newblock {\em Lie Groups Beyond an Introduction}, volume 140.
\newblock Springer, 1996.

\bibitem{kydonakis2021topological}
Georgios Kydonakis, Hao Sun, and Lutian Zhao.
\newblock Topological invariants of parabolic {$G$}-{H}iggs bundles.
\newblock {\em Mathematische Zeitschrift}, 297:585--632, 2021.

\bibitem{kydonakis2023logahoric}
Georgios Kydonakis, Hao Sun, and Lutian Zhao.
\newblock Logahoric {H}iggs torsors for a complex reductive group.
\newblock {\em Mathematische Annalen}, 388(3):3183--3228, 2024.

\bibitem{milnor1958existence}
John Milnor.
\newblock On the existence of a connection with curvature zero.
\newblock {\em Commentarii Mathematici Helvetici}, 32(1):215--223, 1958.

\bibitem{mondello_2018}
Gabriele Mondello.
\newblock Topology of representation spaces of surface groups in
  $\mathrm{PSL}_2 (\mathbb{R})$ with assigned boundary monodromy and nonzero
  euler number.
\newblock {\em Pure and Applied Mathematics Quarterly}, 12(3):399--462, 2018.

\bibitem{mumford1994geometric}
David Mumford, John Fogarty, and Frances Kirwan.
\newblock {\em Geometric Invariant Theory}, volume~34.
\newblock Springer Science \& Business Media, 1994.

\bibitem{Newstead2013IntroductionTM}
Peter~E. Newstead.
\newblock {\em Introduction to {M}oduli {P}roblems and {O}rbit Spaces}.
\newblock Tata Institute of Fundamental Research, 2013.

\bibitem{ramanan1981orthogonal}
Sundararaman Ramanan.
\newblock Orthogonal and spin bundles over hyperelliptic curves.
\newblock {\em Proceedings of the Indian Academy of Sciences-Mathematical
  Sciences}, 90(2):151--166, 1981.

\bibitem{schaposnik2018introduction}
Laura~P. Schaposnik.
\newblock An introduction to spectral data for {H}iggs bundles.
\newblock {\em The {G}eometry, {T}opology and {P}hysics of {M}oduli Spaces of
  {H}iggs {B}undles}, pages 65--101, 2018.

\bibitem{schmitt2008geometric}
Alexander H.~W. Schmitt.
\newblock {\em Geometric Invariant Theory and Decorated Principal Bundles},
  volume~11.
\newblock European Mathematical Society, 2008.

\bibitem{Simpson1990HarmonicBO}
Carlos Simpson.
\newblock Harmonic bundles on noncompact curves.
\newblock {\em Journal of the American Mathematical Society}, 3:713--770, 1990.

\bibitem{stacks-project}
The {Stacks project authors}.
\newblock The stacks project.
\newblock \url{https://stacks.math.columbia.edu}, 2023.

\bibitem{tholozan2021compact}
Nicolas Tholozan and J\'er\'emy Toulisse.
\newblock Compact connected components in relative character varieties of
  punctured spheres.
\newblock {\em \'Epijournal de G\'eom\'etrie Alg\'ebrique}, 5(6):37, 2021.

\bibitem{thomas2005notes}
Richard~P. Thomas.
\newblock Notes on {GIT} and symplectic reduction for bundles and varieties.
\newblock {\em arXiv preprint math/0512411}, 2005.

\bibitem{Winkelmann2002GenericSO}
J{\"o}rg Winkelmann.
\newblock Generic subgroups of {L}ie groups.
\newblock {\em Topology}, 41:163--181, 2002.

\bibitem{wood1971bundles}
John~W. Wood.
\newblock Bundles with totally disconnected structure group.
\newblock {\em Commentarii Mathematici Helvetici}, 46(1):257--273, 1971.

\bibitem{Yokogawa1993CompactificationOM}
Kōji Yokogawa.
\newblock Compactification of moduli of parabolic sheaves and moduli of
  parabolic {H}iggs sheaves.
\newblock {\em Journal of Mathematics of Kyoto University}, 33:451--504, 1993.

\bibitem{yun2011global}
Zhiwei Yun.
\newblock Global {S}pringer theory.
\newblock {\em Advances in Mathematics}, 228(1):266--328, 2011.

\end{thebibliography}

\end{document}